\documentclass[12pt]{amsart}
\usepackage{amsmath,mathtools,amsthm,amsfonts,bigints,amssymb, mathrsfs, tikz-cd, xcolor, float}
\usepackage{tikz}
\usetikzlibrary{patterns}
\usepackage{pgf,tikz}

\usepackage{pgfplots}
\usepgfplotslibrary{fillbetween}

\usetikzlibrary{arrows}

\newtheorem{theorem}{Theorem}[section]
\newtheorem{lemma}[theorem]{Lemma}

\newtheorem{corollary}[theorem]{Corollary}

\theoremstyle{definition}
\newtheorem{definition}[theorem]{Definition}

\newtheorem{remark}[theorem]{Remark}

\newcommand{\what}{\widehat}

\newcommand{\R}{\mathbb R}%
\newcommand{\C}{\mathbb C}%
\newcommand{\N}{\mathbb N}%
\newcommand{\Sb}{\mathbb S}%

\newcommand{\Ac}{\mathcal A}%
\newcommand{\T}{\mathbb T}%
\newcommand{\X}{\mathbb X}%
\newcommand{\Hb}{\mathbb H}%

\numberwithin{equation}{section}

\usepackage[colorlinks=true,linkcolor=blue,citecolor=blue]{hyperref}
\makeatletter

\renewcommand\subsubsection{\@secnumfont}{\bfseries}%
\renewcommand\subsubsection{\@startsection{subsubsection}{3}
  \z@{.5\linespacing\@plus.7\linespacing}{-.5em}%
  {\normalfont\bfseries}}

  \makeatother
\makeatletter
\@namedef{subjclassname@2020}{%
  \textup{2020} Mathematics Subject Classification}
\makeatother
\begin{document}

\title[Ces\`aro summability and Talbot effect]{Ces\`aro summability of H\"older functions and Talbot effect on rank one Riemannian symmetric spaces of compact type }

\author[Utsav Dewan]{Utsav Dewan}
\address{Department of Mathematics, Indian Institute of Technology Bombay, Powai, Mumbai-400076, India}
\email{utsav@math.iitb.ac.in}

\subjclass[2020]{Primary 43A85, 22E30; Secondary 42B08, 43A55, 28A80}

\keywords{Rank one Riemannian symmetric spaces of compact type, Ces\`aro summability, H\"older functions, Zonal spherical functions, Schr\"odinger propagation,  Upper Minkowski dimension}

\begin{abstract}
On rank one Riemannian symmetric spaces of compact type (of dimension $\ge 2$), we first obtain a quantitative characterization of H\"older continuity in terms of Ces\`aro means. In addition to some approximation theoretic applications, we also apply it to study the celebrated physical phenomenon known as `Talbot effect' arising from diffraction theory. More precisely, for almost every fixed time instance, we study the H\"older continuity and the fractal profile of the Schr\"odinger propagation in terms of the decay of the Littlewood-Paley projections of the initial data. In the process, we also obtain oscillatory expansions of zonal spherical functions uniformly near the origin and near the cut locus respectively, which may be of independent interest.
\end{abstract}

\maketitle
\tableofcontents

\section{Introduction}
\label{sec1}
Let $U$ be a connected, simply connected, compact, semi-simple Lie group and $K$ be a closed subgroup with the property that $U^\theta_0 \subset K \subset U^\theta$ where $\theta$ is an involution of $U$, $U^\theta$ denotes the subgroup of $\theta$-fixed points and $U^\theta_0$ its identity component. Let $\mathfrak{u}$ be the Lie algebra of $U$ and $\mathfrak{u}=\mathfrak{k}+\mathfrak{q}$ be the Cartan decomposition corresponding to $\theta$, where $\mathfrak{k}$ is the Lie algebra of $K$. Let $\mathfrak{a} \subset \mathfrak{q}$ be a maximal abelian subspace. Then rank one Riemannian symmetric spaces of compact type are homogeneous spaces $\X=U/K$ such that the dimension of $\mathfrak{a}$ is one. The tangent space at the origin $T_o\X$ (where $o=eK$) is identified with $\mathfrak{q}$, with the underlying Riemannian metric induced by the Killing form. 

In fact, one has a well-known classification of such spaces \cite{Wang}: 
\begin{itemize}
\item the sphere $\Sb^d =SO(d+1)/SO(d)$, $d=1,2,3,\dots$;
\item the real projective space $P^d(\R)=SO(d+1)/O(d)$, $d=2,3,4,\dots$;
\item the complex projective space $P^d(\C)=SU(l+1)/S(U(l) \times U(1))$, $d=4,6,8,\dots$ and $l=d/2$;
\item the quarternionic projective space $P^d(\Hb)=Sp(l+1)/Sp(l) \times Sp(1)$, $d=8,12,16,\dots$ and $l=d/4$;
\item the Cayley projective plane $P^{16}(Cay)$.
\end{itemize}
Here $d$ denotes the real dimension of any of these spaces, $O(d),\:U(d),\:Sp(d)$ denote the orthogonal, unitary and symplectic groups of order $d$ and $S(\cdot)$ denotes the formation of a subgroup of matrices of unit determinant. In this article, we are interested when $d \ge 2$, that is, we exclude the commutative group, the unit circle $\Sb^1$ from our discussion.

The space $\X$ is equipped with the push-forward measure of the normalized Haar measure $du$ of $U$. Let $(\delta,V_\delta)$ be an irreducible unitary representation of $U$ and $V^K_\delta$ be the space of vectors $v \in V_\delta$ fixed under $\delta(K)$. The harmonic analytic aspects of $\X$ are captured by $\what{U}_K$, the collection of equivalence classes of irreducible, unitary representations $\delta$ of $U$ such that $V^K_\delta \ne \{0\}$, as by the Peter-Weyl theory of $U$, we have the decomposition,
\begin{equation} \label{peter-weyl}
L^2(\X) = \bigoplus_{\delta \in \what{U}_K} \mathscr{H}_\delta(\X)\:,
\end{equation}
where
\begin{equation*}
\mathscr{H}_\delta(\X):= \left\{\langle \delta(u){\bf e}, v   \rangle \mid v \in V_\delta\right\}\:,
\end{equation*}
and ${\bf e}$ is the unique unit vector spanning $V^K_\delta$.

By \cite[Theorem 4.1, p. 535]{Helgason}, $\what{U}_K$ can be identified with the highest weight lattice $\Lambda^+(U/K)$, which in rank one is simply given by $\N \cup \{0\}$, the set of non-negative integers. Moreover, the decomposition (\ref{peter-weyl}) is also the eigenspace decomposition of the Laplace-Beltrami operator $\Delta$ on $\X$: for any $f \in \mathscr{H}_n(\X)\:,\:n \in \N \cup \{0\}$,
\begin{equation*}
\Delta f = \lambda^2_n f\:,
\end{equation*}
where $0=\lambda^2_0<\lambda^2_1<\lambda^2_2< \dots$ consist of the discrete $L^2$-spectrum of $\Delta$.

For $f \in L^2(\X)$, we have the following equality in the $L^2$-sense:
\begin{equation*}
f=\sum_{n=0}^\infty proj_nf\:,
\end{equation*}
where for $n \in \N \cup \{0\}$, $proj_nf$ is the orthogonal projection of $f$ onto the subspace $\mathscr{H}_n(\X)$. In other words, the sequence of partial sums 
\begin{equation} \label{partial_sum}
S_Nf:=\sum_{n=0}^N proj_nf\:,
\end{equation}
satisfies
\begin{equation} \label{convergence_partial_sum}
\lim_{N \to \infty} \|S_Nf-f\|_{L^2(\X)}=0\:.
\end{equation}
If however $f \in L^p(\X)$ for $p \in [1,2) \cup (2,\infty)$ or $f \in C(\X)$ (the space of continuous functions on $\X$) for $p=\infty$, the convergence (\ref{convergence_partial_sum}) fails in general in the $L^p$ norm.  This follows from Fefferman's negative solution to the ball multiplier theorem \cite{Fefferman} and from transplantation theorems \cite{Mitjagin}.

This motivates one to look at other summability methods. One of the most prominent example of such a method is that of Ces\`aro means: for $\delta \ge 0$, the Ces\`aro means of index $\delta$ of the sequence of partial sums (\ref{partial_sum}) are defined as,
\begin{equation} \label{Cesaro_mean}
C^\delta_Nf := \frac{1}{A^\delta_N} \sum_{n=0}^N A^\delta_{N-n}\:proj_nf\:,\: N \in \N \cup \{0\}\:,
\end{equation}
where
\begin{equation*}
A^\delta_m= {m+\delta \choose m}\:.
\end{equation*}
Note that if $\delta=0$, $C^\delta_Nf=S_Nf$. In their celebrated work \cite{Bonami}, Bonami and Clerc studied the convergence properties of the Ces\`aro means (\ref{Cesaro_mean}) on $\X$. Among other results, they proved that for $\delta > \frac{d-1}{2}$ and $f \in C(\X)$, 
\begin{equation} \label{cesaro_convergence}
\lim_{N \to \infty} \|C^\delta_Nf-f\|_{L^\infty(\X)}=0\:.
\end{equation} 

An important subclass of $C(\X)$ is $C^\gamma(\X)$, the class of H\"older functions of order $\gamma \in (0,1)$. We recall that $f \in C^\gamma(\X),\:\gamma \in (0,1)$ if
\begin{equation} \label{Holder}
|f(x)-f(y)| \le C d(x,y)^\gamma\:,\: x,y \in \X\:,
\end{equation}
where $C>0$ is some constant and $d(\cdot,\cdot)$ is the inner metric on the Riemannian manifold $\X$. Our first result is a quantitative characterization of $C^\gamma(\X)$, in the spirit of (\ref{cesaro_convergence}):
\begin{theorem} \label{thm_cesaro}
For $d \ge 2$, set \begin{equation} \label{cesaro_delta} \delta_*:=
\begin{cases}
	 d\:,\:&\text{ if } \X=\Sb^d\:,P^d(\R)\:, \\
	 \frac{d+2}{2}\:,\:&\text{ if } \X=P^d(\C)\:,\\
	 \frac{d+4}{2}\:,\:&\text{ if } \X=P^d(\Hb)\:,\\
	 12\:,\:&\text{ if } \X=P^{16}(Cay)\:.
	\end{cases}
\end{equation}
Let $\delta \ge \delta_*$ and $\gamma \in (0,1)$. Then $f \in C^\gamma(\X)$ if and only if $\|C^\delta_Nf-f\|_{L^\infty(\X)} \lesssim N^{-\gamma}\:.$
\end{theorem}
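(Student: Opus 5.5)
The plan is to write $C^\delta_N$ as convolution with a zonal kernel and to prove the two directions separately. With $\varphi_n$ the normalized zonal spherical function (a Jacobi polynomial $P^{(\alpha,\beta)}_n(\cos\theta)/P^{(\alpha,\beta)}_n(1)$ in the geodesic distance $\theta\in[0,\pi]$, or $[0,\pi/2]$ for projective spaces) and $d_n=\dim\mathscr H_n(\X)$, we have
\[
C^\delta_Nf(x)=\int_\X f(y)\,K^\delta_N(d(x,y))\,dy,\qquad K^\delta_N=\frac{1}{A^\delta_N}\sum_{n=0}^N A^\delta_{N-n}\,d_n\,\varphi_n .
\]
The parameters here are $\alpha=\frac{d-2}{2}$, and $\beta=\alpha$ for $\Sb^d$, $\beta=-\frac12$ for $P^d(\R)$, $\beta=0,1,3$ for $P^d(\C),P^d(\Hb),P^{16}(Cay)$. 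With these values one checks $\delta_*=\alpha+\beta+2$ in every case. Since $\int K^\delta_N=1$, we get
\[
C^\delta_Nf(x)-f(x)=\int (f(y)-f(x))\,K^\delta_N(d(x,y))\,dy .
\]

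\textbf{Direct part} ($f\in C^\gamma\Rightarrow$ rate $N^{-\gamma}$). It suffices to prove
\[
\int_\X d(o,y)^\gamma\,|K^\delta_N(d(o,y))|\,dy\lesssim N^{-\gamma}.
\]
For this I would establish the pointwise kernel estimate
\[
|K^\delta_N(\theta)|\lesssim \frac{N^d}{(1+N\theta)^{\alpha+\delta+3/2}}+\frac{N^{d}}{(1+N\theta)^{\alpha+3/2}(1+N(\pi-\theta))^{\beta+1/2+(\delta-\beta-1/2)}}+\dots,
\]
which is the classical Szeg\H{o}/Bonami--Clerc type bound for Jacobi--Ces\`aro kernels. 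The tool is repeated summation by parts (Abel) using the Christoffel--Darboux formula. Near $\theta=0$ the kernel behaves like $N^d(N\theta)^{-(d+1)/2-\delta}$ up to oscillation; near the cut locus the decay is governed by $\beta$. Integrating against the density $(\sin\theta)^{2\alpha+1}(\sin\theta/2\text{ or }\cos\theta)^{\dots}\,d\theta$, the region $\theta\le 1/N$ contributes $N^{-\gamma}$. The region $1/N\le\theta$ away from the cut locus contributes
\[
\int_{1/N}\theta^\gamma N^d(N\theta)^{-(d+1)/2-\delta}\theta^{d-1}\,d\theta\lesssim N^{-\gamma}
\]
once $\delta>\frac{d-1}{2}+\gamma$, which holds for $\delta\ge\delta_*$. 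Near the cut locus the bounded factor $\theta^\gamma\le C$ is used, and we need the kernel's $L^1$ mass there to be $O(N^{-1})$. This is exactly where the threshold $\delta\ge\alpha+\beta+2=\delta_*$ enters: the antipodal weight $(\pi-\theta)^{2\beta+1}$ combined with the kernel's behaviour $N^{\alpha+\beta+1-\delta}\cdots$ near $\pi$ must give $\lesssim N^{-1}$. I expect this cut-locus estimate to be the main obstacle. It requires the uniform oscillatory expansion of $\varphi_n$ near the cut locus, mentioned in the abstract, to capture cancellation after Abel summation.

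\textbf{Converse} (rate $\Rightarrow f\in C^\gamma$). This is a Bernstein-type argument. Set $g_k=C^\delta_{2^k}f$, a polynomial of degree $\le 2^k$. Then
\[
\|g_{k+1}-g_k\|_\infty\lesssim 2^{-k\gamma},\qquad f=g_0+\sum_k(g_{k+1}-g_k)\ \text{uniformly}.
\]
I would use the Bernstein inequality $\|\nabla P\|_\infty\lesssim m\|P\|_\infty$ for $P\in\bigoplus_{n\le m}\mathscr H_n$ on the compact symmetric space. This follows, for instance, by writing $P=V_{2m}P$ with a de la Vall\'ee Poussin kernel whose gradient has $L^1$ norm $O(m)$; that bound is again a kernel estimate of the same type as above. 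Then for $d(x,y)\sim 2^{-J}$,
\[
|f(x)-f(y)|\le\sum_{k<J}2^k2^{-k\gamma}d(x,y)+2\sum_{k\ge J}2^{-k\gamma}\lesssim d(x,y)^\gamma,
\]
using $\gamma<1$.
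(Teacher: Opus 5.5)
Your converse half is sound and follows the paper's dyadic telescoping argument. The gap is in the direct half: it rests on a kernel estimate that you neither prove nor state correctly.

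\textbf{The kernel bound is wrong away from the origin.} For $\delta\ge\delta_*$ the Ces\`aro kernel is \emph{not} $O\big(N^d(1+N\theta)^{-(d+1)/2-\delta}\big)$ for $\theta\ge 1/N$. The multiplier $A^\delta_{N-n}/A^\delta_N=1-\delta\frac{n}{N}+O(n^2/N^2)$ has a term linear in $n$. That term produces a non-oscillating piece of order $N^{-1}\theta^{-(d+1)}$ which does not cancel. The same phenomenon is visible on the circle with $\delta=2$, where one computes exactly
\[
K^2_N(\theta)=\frac{1}{(N+2)\sin^2(\theta/2)}-\frac{\sum_{j=1}^{N+1}\cos j\theta}{(N+1)(N+2)\sin^2(\theta/2)}=\frac{1}{(N+2)\sin^2(\theta/2)}+O\big(N^{-2}\theta^{-3}\big),
\]
whereas your bound would give only $O(N^{-2}\theta^{-3})$.

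Since $N^{-1}\theta^{-(d+1)}=N^d(N\theta)^{-(d+1)}$, this term dominates yours as soon as $\delta\ge\frac{d+1}{2}$. So your condition $\delta>\frac{d-1}{2}+\gamma$ is derived from the wrong term. With the correct term, the region $\theta\ge 1/N$ contributes $N^{-1}\int_{1/N}^{\pi}\theta^{\gamma-2}\,d\theta\lesssim N^{-\gamma}$. This is exactly where $\gamma<1$ is used; compare the paper's sum $N^{-1}\sum_{2^k\lesssim N^2}2^{k(1-\gamma)/2}$.

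\textbf{The cut-locus estimate is left open.} You correctly guess that the oscillating part has size $N^{\alpha+\beta+1-\delta}$ at the antipode, hence is $\lesssim N^{-1}$ iff $\delta\ge\alpha+\beta+2$. But you leave this as the ``main obstacle'' without proving a bound, so the direct implication is not established. The oscillatory expansions you point to (Theorem \ref{thm_oscillatory_zn}) are not what is needed; the paper uses them only for the Talbot effect.

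\textbf{How the paper closes it.} It quotes the uniform estimate (\ref{cesaro_kernel_estimate}) for Ces\`aro means of Jacobi expansions:
\[
0\le K^\delta_N(\theta)\lesssim N^{-1}\left(1-\cos\theta+N^{-2}\right)^{-\frac{d+1}{2}},\qquad \theta\in[0,\pi],\quad \delta\ge\delta_*.
\]
Its range of validity $\delta\ge\alpha+\beta+2$ is precisely where $\delta_*$ comes from. This bound already contains the $N^{-1}\theta^{-(d+1)}$ term and gives $O(N^{-1})$ uniformly near the cut locus. So no separate antipodal analysis is needed: one splits $\int|f(y)-f(x)|\,K^\delta_N(d(x,y))\,d\mu(y)$ over $B(x,N^{-1})$ and dyadic annuli. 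To repair your proposal, replace your kernel bound with this one, either by citing it or by proving it.

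\textbf{Two smaller remarks.}
First, your Bernstein inequality via a de la Vall\'ee Poussin kernel with $\|\nabla_x V_{2m}(d(x,\cdot))\|_{L^1}\lesssim m$ is valid, but it requires yet another kernel estimate. The paper gets it at once from Platonov's bound $\left|\frac{d}{ds}\Phi(\gamma_{x,v}(s))\right|\le m\|\Phi\|_{L^\infty(\X)}$ along unit-speed geodesics (Lemma \ref{primitive_bernstein}).
Second, your check $\delta_*=\alpha+\beta+2$ fails for $P^d(\R)$ with $\beta=-\frac12$: it gives $\frac{d+1}{2}$, not $d$. This is harmless for the direct estimate, and the paper handles $P^d(\R)$ differently anyway, by lifting to even functions on $\Sb^d$.
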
  

The necessity of the condition  $\|C^\delta_Nf-f\|_{L^\infty(\X)} \lesssim N^{-\gamma}\:,$ follows by decomposing $\X$ into a small geodesic ball and finitely many dyadic geodesic annuli and then suitably applying the kernel estimates given by (\ref{cesaro_kernel_estimate}) on them. The arguments for the sufficiency part on the other hand, involve the Hopf-Rinow theorem and a genuine analogue of the classical Bernstein inequality for spherical polynomials (Lemma \ref{bernstein}). In fact, we prove a much general statement for the sufficiency of a function to be in $C^\gamma(\X),\:\gamma \in (0,1),$ which is recorded in Corollary \ref{sufficiency_corollary}. Theorem \ref{thm_cesaro} generalizes the corresponding result for $\Sb^d$ \cite[Theorem 2.13]{Huynh} to the broad setting of $\X$. 

Theorem \ref{thm_cesaro} has some important consequences. Firstly, for $f \in C(\X)$ and $x \in \X$, we recall the spherical mean of $f$ at $x$ for $r$ smaller than the diameter of $\X$:
\begin{equation*}
\tau_rf(x):= \int_{S(x,r)} f\:d\sigma_r\:, 
\end{equation*}
where $S(x,r)$ is the geodesic sphere in $\X$, centred at $x$ with radius $r$ and $d\sigma_r$ is the (normalized) induced Riemannian measure on $S(x,r)$. Then the H\"older condition (\ref{Holder}) has an equivalent formulation in terms of $\tau_r$ and the identity operator $I$:
\begin{corollary} \label{corollary1}
Let $f \in C(\X)$ and $\gamma \in (0,1)$. Then $f \in C^\gamma(\X)$ if and only if for $t \to 0+$, we have $\displaystyle\sup_{0<r \le t} \|(\tau_r -I)^\frac{1}{2}f\|_{L^\infty(\X)} \lesssim t^\gamma$\:.
\end{corollary}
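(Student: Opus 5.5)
The plan is to reduce Corollary \ref{corollary1} to Theorem \ref{thm_cesaro} (together with Corollary \ref{sufficiency_corollary}) by working on the spectral side. Since $\tau_r$ commutes with the $U$-action, it acts on each $\mathscr{H}_n(\X)$ as multiplication by $\varphi_n(r)$, the zonal spherical function evaluated at the point at distance $r$. Because $\varphi_n(r)\le 1$, I read $(\tau_r-I)^{\frac12}$ as the operator $(I-\tau_r)^{\frac12}$, up to a unimodular factor. On $\mathscr{H}_n(\X)$ it is multiplication by $m_r(n):=(1-\varphi_n(r))^{\frac12}$.

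The first step is to record the behaviour of $m_r(n)$, using the oscillatory expansion of $\varphi_n$ near the origin announced in the abstract. For $nr\lesssim 1$ we have $1-\varphi_n(r)=(nr)^2\psi(nr)$ with $\psi$ smooth and positive, so $m_r(n)= nr\,\psi(nr)^{\frac12}$ is a smooth symbol of order one in $n$. For $nr\gtrsim 1$ and $r$ below the diameter, $\varphi_n(r)$ stays away from $1$, so $m_r(n)\asymp 1$, again with symbol-type estimates in $n$. The key technical lemma I would aim for is a uniform Marcinkiewicz-type multiplier bound on $L^\infty(\X)$: a symbol $a(n)$ that is smooth, localized, and satisfies $|\Delta^k a(n)|\lesssim n^{-k}$ for $k\le \lceil\delta_*\rceil+1$ is bounded on $L^\infty(\X)$. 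I would prove this by Abel summation against the Ces\`aro kernels, using the kernel estimates (\ref{cesaro_kernel_estimate}) for $\delta\ge\delta_*$. I expect this lemma to be the main obstacle: one has to check the finite differences of $m_r(n)$, and of the quotients used below, uniformly in $r$ across the transition region $nr\approx 1$.

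For necessity, let $f\in C^\gamma(\X)$ and write $f=C^\delta_1 f+\sum_{k\ge0} g_k$ with $g_k:=C^\delta_{2^{k+1}}f-C^\delta_{2^k}f$. By Theorem \ref{thm_cesaro}, $\|g_k\|_\infty\lesssim 2^{-k\gamma}$, and $g_k$ is a spherical polynomial of degree $\le 2^{k+1}$. Split the sum according to the size of $2^k$ relative to $1/r$:
\begin{itemize}
\item For $2^k\le 1/r$, the multiplier lemma applied to $m_r(n)/(nr)$, cut off to degrees $\le 2^{k+1}$, gives a Bernstein-type bound $\|(I-\tau_r)^{\frac12}g_k\|_\infty\lesssim 2^k r\,\|g_k\|_\infty$, in the spirit of Lemma \ref{bernstein}. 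Summing, $\sum_{2^k\le 1/r}2^{k(1-\gamma)}r\lesssim r^{\gamma}$, where $\gamma<1$ is used.
\item For $2^k>1/r$, uniform boundedness of $m_r$ on dyadic blocks gives $\sum_{2^k>1/r}2^{-k\gamma}\lesssim r^\gamma$.
\end{itemize}
Taking the supremum over $r\le t$ gives the bound $t^\gamma$.

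For sufficiency, set $N\approx 1/t$ and write $f-C^\delta_Nf=T_N (I-\tau_{1/N})^{\frac12}f$. Here $T_N$ has symbol $(1-A^\delta_{N-n}/A^\delta_N)\,\mathbf 1_{n\le N}/m_{1/N}(n)+\mathbf 1_{n>N}/m_{1/N}(n)$. Near $n=0$ the numerator vanishes to first order in $n/N$, matching $m_{1/N}(n)\asymp n/N$, so this quotient is a symbol of order zero. The part $n>N$ should be handled by first replacing $C^\delta_N$ with a smooth de la Vall\'ee Poussin type mean $V_N$ whose symbol is $1$ up to degree $N$ and vanishes past $2N$. If the multiplier lemma gives $\|T_N\|_{L^\infty\to L^\infty}\lesssim 1$, then $\|f-V_Nf\|_\infty\lesssim \|(I-\tau_{1/N})^{\frac12}f\|_\infty\lesssim N^{-\gamma}$. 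The polynomials $V_Nf$ of degree $\le 2N$ then approximate $f$ at rate $N^{-\gamma}$, and Corollary \ref{sufficiency_corollary} yields $f\in C^\gamma(\X)$.
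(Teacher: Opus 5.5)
You take a different route from the paper. There, Corollary \ref{corollary1} is immediate. Theorem \ref{thm_cesaro} turns $f\in C^\gamma(\X)$ into $\|C^\delta_Nf-f\|_{L^\infty(\X)}\lesssim N^{-\gamma}$, and \cite[Theorem 5.1]{Li} (Lemma \ref{Li_lemma1}, applicable since $\delta_*>\frac{d-1}{2}$) says this Ces\`aro rate is equivalent to the modulus bound. You instead try to prove that equivalence yourself with spectral multipliers, which would make the argument self-contained.

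Your necessity direction is fine in outline. The bound $\|(I-\tau_r)^{1/2}g\|_\infty\lesssim Mr\|g\|_\infty$ for $\deg g\le M\le 1/r$ is a genuinely localized multiplier statement, since the symbol lives where $nr\lesssim 1$. Abel summation against $C^\delta_n$ works there, provided you state the lemma with differences $\lesssim M^{-k}$ on $[0,CM]$; with only $n^{-k}$ you lose a factor $\log M$. For the terms with $2^k>1/r$, drop the multiplier argument: $g_k$ is not supported on a dyadic block, and $m_r$ has no symbol-type estimates there (see below). Instead use the binomial series: $\|(I-\tau_r)^{1/2}\|_{L^\infty\to L^\infty}\le\sum_{j\ge0}\left|\binom{1/2}{j}\right|=2$, because $\tau_r$ is an averaging operator.

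The genuine gap is the claim $\|T_N\|_{L^\infty\to L^\infty}\lesssim 1$ in the sufficiency direction.
\begin{enumerate}
\item Even after replacing $C^\delta_N$ by $V_N$, the symbol of $T_N$ equals $(1-\varphi_n(1/N))^{-1/2}$ for every $n\ge 2N$, so it is not localized. Abel summation against Ces\`aro means gives nothing for unlocalized symbols: already $|\Delta^k a(n)|\lesssim n^{-k}$ leads to $\sum_n n^{-k}A^{k-1}_n\asymp\sum_n n^{-1}=\infty$.
\item Worse, the tail symbol does not satisfy the difference bounds at all. By Lemma \ref{frenzen} and Bessel asymptotics, for $n\gg N$ one has $\varphi_n(1/N)\approx c\,(n/N)^{-\frac{d-1}{2}}\cos(n/N-\phi)$. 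This oscillates in $n$ with frequency $1/N$, so $|\Delta^k(1-\varphi_n(1/N))^{-1/2}|\approx N^{-k}(n/N)^{-\frac{d-1}{2}}$.
\item That exceeds $n^{-k}$ by the factor $(n/N)^{k-\frac{d-1}{2}}\to\infty$, precisely for the number of differences $k\ge\delta_*+1>\frac{d-1}{2}$ your lemma needs. The same computation shows that your first-step claim of symbol-type estimates for $m_r(n)$ when $nr\gtrsim1$ is false.
\end{enumerate}
Whether $(I-\tau_{1/N})^{-1/2}(I-V_N)$ is bounded on $L^\infty(\X)$ is a real question. It would require, for instance, enough decay of $\|\tau_{1/N}^j(I-V_N)\|_{L^\infty\to L^\infty}$ in $j$ to sum the Neumann series with coefficients $\asymp j^{-1/2}$. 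Nothing in your plan addresses it.

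The remedy is to invert scale by scale, which is exactly what the $\sup_{0<r\le t}$ in the hypothesis allows.
\begin{enumerate}
\item Take a smooth dyadic partition $\sum_{j\ge0}\chi_j(n)=1$ with $\chi_j$ supported in $2^{j-1}\le n\le 2^{j+1}$ for $j\ge1$. Let $X_j$ act by $\chi_j(n)$ on $\mathscr{H}_n(\X)$.
\item Write $X_jf=A_j(I-\tau_{2^{-j}})^{1/2}f$, where $A_j$ has symbol $\chi_j(n)/m_{2^{-j}}(n)$. This symbol is supported where $n2^{-j}\asymp 1$. There $1-\varphi_n(2^{-j})$ is bounded below and its $k$-th differences in $n$ are $\lesssim 2^{-jk}$; this is the transition-region analysis you already flagged.
\item Your localized lemma then gives $\|X_jf\|_\infty\lesssim\|(I-\tau_{2^{-j}})^{1/2}f\|_\infty\lesssim 2^{-j\gamma}$ for large $j$.
\item The partial sums $\sum_{j\le k}X_jf$ are uniformly bounded de la Vall\'ee Poussin means, so they converge uniformly to the continuous $f$. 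Each is a spherical polynomial of degree $\le 2^{k+1}$ with $\|f-\sum_{j\le k}X_jf\|_\infty\lesssim\sum_{j>k}2^{-j\gamma}\lesssim 2^{-k\gamma}$.
\item Corollary \ref{sufficiency_corollary}, after a shift of index, gives $f\in C^\gamma(\X)$.
\end{enumerate}
Go through Corollary \ref{sufficiency_corollary} rather than Corollary \ref{corollary3}, since the paper's $B^\gamma_{\infty,\infty}$ is defined with the sharp projections $P_N$, which are not bounded on $L^\infty$.
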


Next, we recall another well-known summation method, namely the Riesz means. For $\delta \ge 0$, the Riesz means of index $\delta$ of the sequence of partial sums (\ref{partial_sum}) are defined as,
\begin{equation*}
S^{\delta}_Rf:= \sum_{\lambda_n <R} \left(1-\frac{\lambda_n}{R}\right)^\delta \:proj_nf \:.
\end{equation*}
We next obtain a quantitative characterization of $C^\gamma(\X)$ in terms of asymptotics of their Riesz means:
\begin{corollary} \label{corollary2}
Let $\delta \ge \delta_*,\:\gamma \in (0,1)$ and $f \in C(\X)$. Then $f \in C^\gamma(\X)$ if and only if $\|S^\delta_Rf-f\|_{L^\infty(\X)} \lesssim R^{-\gamma}\:.$
\end{corollary}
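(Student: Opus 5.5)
Both directions of Corollary \ref{corollary2} will be reduced to Theorem \ref{thm_cesaro} rather than redoing the kernel analysis. Riesz and Ces\`aro means differ only by the reparametrisation $n\mapsto\lambda_n$; in rank one, $\lambda_n^2=n(n+\rho')$ with $\rho'$ depending on $\X$, so $\lambda_n=n+O(1)$. The plan is to write the Riesz multiplier $m_R(n)=(1-\lambda_n/R)_+^\delta$ as a combination of Ces\`aro multipliers $A^\delta_{N-n}/A^\delta_N$, with controlled total weight.

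\emph{Sufficiency} ($\|S^\delta_Rf-f\|_\infty\lesssim R^{-\gamma}\Rightarrow f\in C^\gamma(\X)$). Here we use the general sufficiency criterion, Corollary \ref{sufficiency_corollary}. Note that $S^\delta_Rf$ is a spherical polynomial of degree $<N\approx R$. The proof of Theorem \ref{thm_cesaro} (via the Bernstein inequality, Lemma \ref{bernstein}, and Hopf--Rinow) needs only one thing: a family of polynomials $P_N$ of degree $\le cN$ with $\|f-P_N\|_\infty\lesssim N^{-\gamma}$. Taking $P_N=S^\delta_{N}f$ supplies this, so $f\in C^\gamma$. If Corollary \ref{sufficiency_corollary} is stated exactly in this form, this direction is immediate. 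Otherwise I would rerun the telescoping argument: write $f=P_1+\sum_k(P_{2^{k+1}}-P_{2^k})$, bound $|P(x)-P(y)|\le d(x,y)\|\nabla P\|_\infty\lesssim d(x,y)\,2^k\|P\|_\infty$ for the low blocks by Lemma \ref{bernstein}, and use the sup bound $2^{-k\gamma}$ for the high blocks. Splitting at $2^k\approx d(x,y)^{-1}$ gives $d(x,y)^\gamma$, where $\gamma<1$ makes both geometric sums converge.

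\emph{Necessity} ($f\in C^\gamma\Rightarrow\|S^\delta_Rf-f\|_\infty\lesssim R^{-\gamma}$). I would express $S^\delta_R$ through Ces\`aro means by the standard summation-by-parts identity. With $\Delta^{k}$ the $k$-th forward difference in $n$ and $k=\lceil\delta\rceil+1$,
\begin{equation*}
S^\delta_Rf=\sum_{N\ge0}\bigl(\Delta^{k}m_R\bigr)(N)\,A^{k-1}_N\,C^{k-1}_Nf .
\end{equation*}
This yields $S^\delta_Rf-f=\sum_N w_N\,(C^{k-1}_Nf-f)$ with $\sum_N w_N=0$, using $m_R(0)=1$. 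The multiplier $m_R$ is only $C^\delta$ at $\lambda_n=R$, which makes this awkward for non-integer $\delta$. So I would instead use a cleaner route: the classical comparison between the Riesz means with respect to $\lambda_n$ and those with respect to $n$, together with the equivalence of Riesz$(n)$ and Ces\`aro means of the same order (Hardy--Riesz). Concretely, I would show that the operators $T_R=S^\delta_R-C^\delta_{N}$ with $N=\lfloor R\rfloor$ satisfy $\|T_Rf\|_\infty\lesssim R^{-\gamma}$ for $f\in C^\gamma$. Write $T_Rf=T_R(f-C^\delta_{\lfloor R/2\rfloor}f)+T_R C^\delta_{\lfloor R/2\rfloor}f$. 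The first term is bounded by $\|T_R\|_{\infty\to\infty}\,R^{-\gamma}$, using Theorem \ref{thm_cesaro}. Uniform boundedness of $S^\delta_R$ on $C(\X)$ for $\delta\ge\delta_*>\frac{d-1}{2}$ would come from the kernel estimate (\ref{cesaro_kernel_estimate}) adapted to the Riesz multiplier.

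The main obstacle is the second piece, which lives on degrees below $N$. There the multipliers differ by $(1-\lambda_n/R)^\delta-A^\delta_{N-n}/A^\delta_N=O(n/R)$ smoothly, because $\lambda_n-n=O(1)$. I would handle this by a Marcinkiewicz-type multiplier estimate on spherical polynomials, bounding it by $R^{-1}\times$(a first-derivative-type quantity). For $f\in C^\gamma$ that quantity is $\lesssim R^{1-\gamma}$ by the Bernstein inequality applied to the Ces\`aro polynomial, and the product gives the required $R^{-\gamma}$.
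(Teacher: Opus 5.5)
The paper's proof is one line: Theorem \ref{thm_cesaro} characterizes $C^\gamma(\X)$ by the Ces\`aro rate $N^{-\gamma}$, and Li's theorem (Lemma \ref{Li_lemma2}) says the Ces\`aro rate and the Riesz rate $R^{-\gamma}$ are equivalent for $\delta>\frac{d-1}{2}$. Your sufficiency half is correct and takes a more elementary route. Since $\lambda_n^2=sn(sn+\alpha+\beta+1)\ge n^2$, each $S^\delta_{2^k}f$ is a spherical polynomial of degree $<2^k$, so Corollary \ref{sufficiency_corollary} applies directly; the finitely many small $k$ are harmless. This avoids both Li's theorem and Theorem \ref{thm_cesaro}, and shows this implication holds for every $\delta\ge0$.

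Your necessity half in effect reproves one direction of Lemma \ref{Li_lemma2}, and as written its analytic core is missing. The splitting $S^\delta_Rf-f=(C^\delta_Nf-f)+T_R(f-C^\delta_Mf)+T_RC^\delta_Mf$, with $N=\lfloor R\rfloor$ and $M=\lfloor R/2\rfloor$, is sound. Two inputs, however, are only asserted.

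First, $\sup_R\|S^\delta_R\|_{C(\X)\to C(\X)}<\infty$ does not follow from (\ref{cesaro_kernel_estimate}), which is a Ces\`aro-kernel bound. You can get it from Sogge's theorem (Remark \ref{riesz_remark}) together with Banach--Steinhaus.

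Second, the low-frequency bound is the whole content of the comparison, and a Marcinkiewicz-type theorem cannot supply it: such theorems give bounds only for $1<p<\infty$, not at $p=\infty$. What works is the Abel-summation identity you wrote and then dropped, applied not to $m_R$ but to the smooth cut-off difference
\begin{equation*}
\mu(n)=\Bigl[\Bigl(1-\frac{\lambda_n}{R}\Bigr)^\delta-\frac{A^\delta_{N-n}}{A^\delta_N}\Bigr]\chi\Bigl(\frac{n}{R}\Bigr),\qquad \chi\equiv1 \text{ on } [0,\tfrac12],\ \operatorname{supp}\chi\subset[0,\tfrac34].
\end{equation*}
For an integer $k\ge\delta_*$, (\ref{cesaro_kernel_estimate}) and (\ref{integral=1}) give $\|C^k_N\|_{\infty\to\infty}=1$. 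Hence $\|\sum_n\mu(n)\,proj_n\|_{\infty\to\infty}\le\sum_N A^k_N|(\Delta^{k+1}\mu)(N)|$, with $\Delta$ the forward difference. You must then check that this sum is $\lesssim R^{-1}$, using $\lambda_n=n+\frac{\alpha+\beta+1}{2}+O(n^{-1})$ (for $s=1$, with symbol-type bounds on differences) and the asymptotics of $\Gamma(x+\delta+1)/\Gamma(x+1)$.

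Because $C^\delta_Mf$ has degree $\le R/2$, where $\chi=1$, this gives $\|T_RC^\delta_Mf\|_\infty\lesssim R^{-1}\|f\|_\infty\le R^{-\gamma}\|f\|_\infty$. The multiplier difference is in fact $O(R^{-1})$, not merely $O(n/R)$, so no Bernstein step is needed. That is fortunate, because Lemma \ref{bernstein} applied directly to $C^\delta_Mf$ yields only $R\|f\|_\infty$, not $R^{1-\gamma}$, unless you telescope dyadically. It also controls $\nabla$, not a multiplier like $\sum_n n\,proj_n$.

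Finally, $\lambda_n^2=n(n+\rho')$ is false for $P^d(\R)$, where $\lambda_n=2n+O(1)$. There you must compare with $C^\delta_{\lfloor R/2\rfloor}$, or lift to even functions on $\Sb^d$ as the paper does; note also that (\ref{cesaro_kernel_estimate}) is not stated for $P^d(\R)$. Once these pieces are supplied, your route would make the corollary independent of Li's result, which the paper's proof is not.
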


\begin{remark} \label{riesz_remark}
In the seminal work \cite{Sogge}, Sogge proved that for $\delta > \frac{d-1}{2}$ and $f \in C(\X)$, 
\begin{equation} \label{riesz_convergence}
\lim_{R \to \infty} \|S^\delta_Rf-f\|_{L^\infty(\X)}=0\:.
\end{equation} 
Thus Corollary \ref{corollary2} can be viewed as a quantitative strengthening of (\ref{riesz_convergence}) for H\"older functions, analogous to what Theorem \ref{thm_cesaro} is compared to (\ref{cesaro_convergence}) for Ces\`aro means.
\end{remark}
  
Next we arrive at Besov spaces. In order to do so, let us first recall that the Littlewood-Paley projections of a function $f$ on $\X$ are given by,
\begin{equation*}
P_Nf:=
\begin{cases}
	 \displaystyle\sum_{n=0}^2  proj_nf\:,\:& N=1\:,\\
	 \displaystyle\sum_{n=N+1}^{2N} proj_nf\:,\:&N\ge 2, \text{ dyadic}\:.
	\end{cases}
\end{equation*}
The Besov spaces $B^\gamma_{p,\infty}(\X)$, for $1 \le p \le \infty$, are defined by the norm in terms of the Littlewood-Paley projections:
\begin{equation} \label{besov_space}
\|f\|_{B^\gamma_{p,\infty}(\X)}:=\sup \left\{N^\gamma\left\|P_Nf\right\|_{L^p(\X)} : N\ge 1, \text{ dyadic}\right\}\:.
\end{equation}
The Besov spaces can be seen as natural generalizations of the Sobolev spaces and hence embedding results of these spaces are of particular interest. 
%(see \cite{Ruzhansky})
As an applicaion of Theorem \ref{thm_cesaro}, we get the following embedding:
\begin{corollary} \label{corollary3}
For $\gamma \in (0,1)$, $B^\gamma_{\infty,\infty}(\X) \subset C^\gamma(\X)$\:.
\end{corollary}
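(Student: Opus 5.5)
To prove Corollary \ref{corollary3}, I would use the sufficiency half of Theorem \ref{thm_cesaro}. Fix $f \in B^\gamma_{\infty,\infty}(\X)$ with $\gamma \in (0,1)$ and fix $\delta \ge \delta_*$. It is enough to show $\|C^\delta_Nf-f\|_{L^\infty(\X)} \lesssim N^{-\gamma}$; the theorem (or the more general sufficiency statement in Corollary \ref{sufficiency_corollary}) then gives $f \in C^\gamma(\X)$.

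The first step is to check that $f$ is continuous, so that everything is meaningful in $C(\X)$. Write $f=\sum_{N \text{ dyadic}} P_Nf$. Each $P_Nf$ is a finite sum of spherical polynomials, hence continuous, and $\|P_Nf\|_\infty \le \|f\|_{B^\gamma_{\infty,\infty}} N^{-\gamma}$. Since $\gamma>0$, the dyadic series converges absolutely in sup norm, so it defines a continuous function. It agrees with $f$ in $L^2$, because the Littlewood--Paley pieces partition the spectrum.

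Next, fix $N$ and let $M$ be the dyadic number with $M \le N < 2M$. Split $f = \sum_{L \le M/4} P_Lf + \sum_{L > M/4} P_Lf =: g + h$. The function $g$ has spectrum in $\{0,\dots,M/2\}$, so each term $\operatorname{proj}_ng$ appears in $C^\delta_Ng$ with weight $A^\delta_{N-n}/A^\delta_N$. Therefore
\begin{equation*}
C^\delta_Ng-g=\sum_{n\le M/2}\Big(\frac{A^\delta_{N-n}}{A^\delta_N}-1\Big)\operatorname{proj}_ng .
\end{equation*}
The multiplier $1-A^\delta_{N-n}/A^\delta_N$ behaves like $n/N$ for $n \le N/2$. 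So I would estimate $C^\delta_N P_L f - P_Lf$ for each dyadic $L \le M/4$ and aim for a bound $\lesssim (L/N)\|P_Lf\|_\infty$. Summing gives
\begin{equation*}
\sum_{L\le M/4} \frac{L}{N} L^{-\gamma} \lesssim N^{-\gamma},
\end{equation*}
using $\gamma<1$.

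For the tail, the uniform boundedness of $C^\delta_N$ on $L^\infty$ (Bonami--Clerc for $\delta>\frac{d-1}{2}$, which is satisfied since $\delta_*>\frac{d-1}{2}$) gives
\begin{equation*}
\|C^\delta_Nh-h\|_\infty \lesssim \|h\|_\infty \le \sum_{L>M/4} L^{-\gamma} \lesssim N^{-\gamma}.
\end{equation*}

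The main obstacle is the low-frequency estimate $\|C^\delta_N P_L f - P_Lf\|_\infty \lesssim (L/N)\|P_Lf\|_\infty$. This is a multiplier estimate for the symbol $m(n)=1-A^\delta_{N-n}/A^\delta_N$ restricted to the band $n \sim L$. My first attempt would be to write $m(n)\chi(n/L)=(L/N)\,\widetilde m(n)$, where $\chi$ is a smooth bump adapted to the band. Then I would show that $\widetilde m$ is a smooth function of $n/L$ with bounded derivatives up to order about $\delta_*$. Bounded $L^\infty$ multipliers of this type can be built from Cesàro means themselves: finite differences of sufficiently smooth sequences are expressed via summation by parts in terms of the $C^\delta_k$, which are uniformly bounded. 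The cruder alternative is to avoid multipliers altogether, writing $C^\delta_Ng-g$ as $N^{-1}$ times a bounded operator applied to $\sum_n n\,\operatorname{proj}_ng$ and then using a Bernstein-type inequality (Lemma \ref{bernstein}) to bound the latter by $L\|P_Lf\|_\infty$ on each band. I would first try the summation-by-parts route, since the Cesàro kernel estimate (\ref{cesaro_kernel_estimate}) is available.
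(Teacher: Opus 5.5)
Your argument is sound in outline, but it takes a much longer road than the paper's. The paper never passes through Ces\`aro means here. It applies Corollary \ref{sufficiency_corollary} directly with $\mathcal{P}_{2^k}=S_{2^k}f$, the dyadic partial sums, which are spherical polynomials of degree $\le 2^k$. Since $f-S_{2^k}f$ is the tail $\sum_{j\ge k}P_{2^j}f$, the triangle inequality gives $\|f-S_{2^k}f\|_{L^\infty(\X)}\lesssim\sum_{j\ge k}2^{-j\gamma}\lesssim 2^{-k\gamma}$, and that is the entire proof. You cite Corollary \ref{sufficiency_corollary} yourself, and your first step (sup-norm convergence of the dyadic series) already contains this tail bound. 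So the step you call the main obstacle can simply be skipped.

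By choosing $\mathcal{P}_{2^k}=C^\delta_{2^k}f$ instead, you must prove the band-limited $L^\infty$ multiplier bound $\|C^\delta_NP_Lf-P_Lf\|_{L^\infty(\X)}\lesssim (L/N)\|P_Lf\|_{L^\infty(\X)}$. Your summation-by-parts plan does deliver it. Take an integer $k>\frac{d-1}{2}$ and set $Dm(j)=m(j)-m(j+1)$; then $\sum_n m(n)\,proj_n=\sum_j (D^{k+1}m)(j)\,A^k_j\,C^k_j$, and the $C^k_j$ are uniformly bounded on $L^\infty$ by Bonami--Clerc. It remains to check $|D^{k+1}\widetilde m(j)|\lesssim L^{-k-1}$ on a range of length $\asymp L$. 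This follows from $|1-A^\delta_{N-n}/A^\delta_N|\lesssim n/N$ together with $\mathcal{O}(N^{-j})$ bounds on the $j$-th derivatives of the Gamma-function expression for $A^\delta_{N-n}/A^\delta_N$ when $n\le N/2$.

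Your ``cruder alternative'' does not work as stated, though. Lemma \ref{bernstein} controls the gradient, not the spectral operator $\sum_n n\,proj_n$ in sup norm, and passing between the two on $L^\infty$ again needs multiplier theory.

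In short, the paper's route costs two lines. Yours costs a multiplier lemma, and in exchange gives a by-product not needed here: $B^\gamma_{\infty,\infty}(\X)$ functions have Ces\`aro rate $N^{-\gamma}$ for every $\delta>\frac{d-1}{2}$, without using (\ref{cesaro_kernel_estimate}). Your explicit check that $f$ has a continuous representative is a detail the paper leaves implicit and is worth keeping.
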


Corollaries \ref{corollary1} and \ref{corollary2} are related to the Peetre $K$-moduli, another crucial notion in approximation theory. But in this article, we will not pursue this direction and refer the interested reader to \cite{Li}. Instead, utilizing the embedding result Corollary \ref{corollary3}, we embark on the study of an interesting physical phenomenon, known as the `Talbot effect'.

The story begins in 1836 when Talbot was studying monochromatic light passing through a diffraction grating \cite{Talbot}. He observed that a sharp focused grating pattern reappears at a certain distance, now known as the Talbot distance. Moreover, at rational multiples of the Talbot distance, the pattern appears to be a finite linear combination of the grating pattern, with a complexity increasing as the denominator of the rational number increases. This physical phenomenon is known as the Talbot effect.
%In \cite{Rayleigh}, Rayleigh also obtained relations between the Talbot distance with the spacing of the grating and the wavelength of the incoming light. 

This study was furthered by Berry and his collaborators in a series of papers \cite{Berry1, Berry2, Berry3, Berry4}. In particular, in \cite{Berry2} Berry and Klein used the Schr\"odinger evolution on $\Sb^1$ to model the Talbot effect and showed that at rational times, the solution is a linear combination of finitely many translates of the initial data with Gauss sums as coefficients. This yields that at rational times, the Schr\"odinger propagation is an $L^r(\Sb^1)$ multiplier for all $r \in [1,\infty]$ with a precise estimate on the operator norm, in terms of the denominator of the rational number (see \cite[Theorem 2.27]{Book}). 

An inquisitive mind is then naturally led to the question of the qualitative properties of the solution at irrational time instances. In this direction, Berry and Klein \cite{Berry2} had already observed that at irrational times, the solutions have a fractal profile. In particular, the Schr\"odinger propagation of a step function at rational times is again a step function but at irrational times it is a continuous but nowhere differentiable function with upper Minkowski dimension $\frac{3}{2}$.  

Let us briefly recall the geometric measure theoretic notion of the upper Minkowski dimension. Let $E$ be a subset of a totally bounded metric space $(\mathcal{X},d)$. For $\varepsilon>0$, let $\mathcal{N}_\varepsilon(E)$ denote the smallest number of metric balls of radii $\le \varepsilon$ which can cover $E$. Then the {\it upper Minkowski dimension} of $E$ is defined to be,
\begin{equation} \label{minkowski_defn}
\overline{\dim}_M(E):= \limsup_{\varepsilon \to 0}\frac{\log \left(\mathcal{N}_\varepsilon(E)\right)}{\log \left(\frac{1}{\varepsilon}\right)}\:.
\end{equation}

Coming back to the literature survey of the Talbot effect, Oskolkov showed that in fact, the Schr\"odinger propagation of a bounded variation function (on $\Sb^1$), is continuous at irrational times \cite{Oskolkov}. Then Kapitanski and Rodnianski furthered the rational-irrational dichotomy by showing that the Schr\"odinger evolution has better regularity properties (measured in the scale of Besov spaces) at irrational times than at rational times \cite{Kapitanski}. This was followed by a host of authors  studying the upper Minkowski dimension of the real and imaginary parts of the solution for almost every time instance for $\Sb^1$ \cite{Rodnianski, CO1, CO2, Chousionis} and also for higher dimensional flat tori $\T^d$ \cite{MathZ}. 

In the case of non-flat compact Riemannian manifolds however, there are only a handful results. In the setting of Zoll manifolds $M$, that is, compact, connected Riemannian manifolds, all of whose geodesics are closed with a common period, Taylor \cite{Taylor} studied the Talbot effect for the Schr\"odinger propagator
\begin{equation} \label{schrodinger}
i\frac{\partial u}{\partial t} +\Delta u=0\:,\:\:	u(\cdot, 0)=f\:, 
\end{equation}
by relating it to a hyperbolic operator at rational times. Then using results from \cite{SSS}, he obtained sharp mapping properties of the Schr\"odinger propagator at rational times between $L^p$-fractional Sobolev spaces: 
\begin{equation*} 
H^{s,p} (M) \to H^{s-(d-1)\left|\frac{1}{2}-\frac{1}{p}\right|,p}(M)\:,
\end{equation*}
where $d$ is the dimension of $M$. The fractal profile analysis at irrational times however has only been studied for $\Sb^d$, recently by Erdo\u{g}an et. al. \cite[Theorem 1.1]{MathZ}. To understand their result we have to go back to \cite{Oskolkov}, that is, bounded variation initial data on $\Sb^1$. For a bounded variation function $f$ on $\Sb^1$, as one has the following decay in its Littlewood-Paley projections:
\begin{equation} \label{Oskolkov_setting}
\|P_Nf\|_{L^p(\Sb^1)} \lesssim N^{-\frac{1}{p}}\:,
\end{equation}
the authors naturally generalized this to the following requirement on functions on $\Sb^d$:
\begin{equation*} 
\|P_Nf\|_{L^p(\Sb^d)} \lesssim N^{-\frac{d}{2}-s}\:,
\end{equation*}
for $p \ge 1$ and some extra smoothing parameter $s\ge 0$ and then studied the fractal profile of the propagator at almost every time instance.

The most well-known examples of Zoll manifolds are given by the class of rank one Riemannian symmetric spaces of compact type. Thus to complement Taylor's result by studying the rational-irrational dichotomy, it seems natural to aim for a generalization of \cite[Theorem 1.1]{MathZ} to all rank one Riemannian symmetric spaces of compact type. In fact, we obtain the following stronger result:
\begin{theorem} \label{thm_talbot}
Let $\X$ be a $d$-dimensional rank one Riemannian symmetric space of compact type $(d \ge 2)$ and $f$ be a real-valued function on $\X$. Set
 \begin{equation*}
s  \begin{cases}
       \ge \frac{d}{p}-\frac{d+1}{2}\:\:&\text{ if } 1 \le p < \frac{2d}{d+1}\:,\\
	   > 0\:\:&\text{ if } p = \frac{2d}{d+1}\:,\\
	 \ge 0\:\:&\text{ if } \frac{2d}{d+1} < p \le \infty\:.
	\end{cases}
\end{equation*}
Assume that $\|P_Nf\|_{L^p(\X)} \lesssim N^{-\left(\frac{d}{2}+s\right)}$, for $N\ge 1$, dyadic. Then 
\begin{enumerate}
\item \label{talbot_part1} for almost all $t$, the solution $u(\cdot,t)$ to (\ref{schrodinger}) in $C^{\gamma'}(\X)$ 
for $\gamma'=\min\{\gamma,1\}$ where 
 \begin{equation*}
\gamma =\begin{cases}
      \left(s+\frac{d+1}{2}-\frac{d}{p}\right)\:\:&\text{ if } 1 \le p < \frac{2d}{d+1}\:,\\
	  s-\:\:&\text{ if } p= \frac{2d}{d+1}\:,\\
	 	s\:\:&\text{ if } \frac{2d}{d+1} < p \le \infty\:;
	\end{cases}
\end{equation*}
\item \label{talbot_part2} and hence, for almost all $t$,
\begin{equation*}
\max\left\{\overline{\dim}_M\left(Re\left(u(\cdot,t)\right)\right)\:,\:\overline{\dim}_M\left(Im\left(u(\cdot,t)\right)\right)\right\} \le (d+1)-\min\{\gamma'',1\}\:,
\end{equation*}
where
\begin{equation*}
\gamma'' =\begin{cases}
      \left(s+\frac{d+1}{2}-\frac{d}{p}\right)\:\:&\text{ if } 1 \le p < \frac{2d}{d+1}\:,\\
	   	s\:\:&\text{ if } \frac{2d}{d+1} \le p \le \infty\:.
	\end{cases}
\end{equation*}
\end{enumerate}
\end{theorem}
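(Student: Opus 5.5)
The plan is to reduce part (\ref{talbot_part1}) to Corollary \ref{corollary3}. That is, for almost every $t$ we will show
\begin{equation*}
\|P_N u(\cdot,t)\|_{L^\infty(\X)} \lesssim_t N^{-\gamma}
\end{equation*}
for all dyadic $N$ (with an arbitrarily small loss at the endpoint $p=\frac{2d}{d+1}$). The relevant object is $P_N u(\cdot,t) = \sum_{n=N+1}^{2N} e^{-it\lambda_n^2} proj_n f$. In rank one the eigenvalues are explicit quadratics, $\lambda_n^2 = n(n+\rho')$ up to normalization, with $\rho'$ rational. Hence $t\mapsto e^{-it\lambda_n^2}$ is periodic and the $n$-dependence is that of a quadratic Weyl phase.

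\textbf{Step 1 (kernel).} Each $proj_n$ is convolution with $d_n \varphi_n$, where $d_n\asymp n^{d-1}$ is the dimension and $\varphi_n$ the zonal spherical function (a Jacobi polynomial in $\cos r$). So $P_N u(\cdot,t) = f * K_{N,t}$ with zonal kernel
\begin{equation*}
K_{N,t}(r) = \sum_{n=N+1}^{2N} e^{-it\lambda_n^2} d_n \varphi_n(r).
\end{equation*}
Since $P_N$ is a projection, we may replace $f$ by $\tilde P_N f = P_{N/2}f+P_Nf+P_{2N}f$ and then apply Young's inequality: $\|P_N u\|_\infty \le \|K_{N,t}\|_{L^{p'}}\|\tilde P_N f\|_{L^p}$. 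It therefore suffices to prove that for a.e.\ $t$,
\begin{equation*}
\|K_{N,t}\|_{L^{p'}(\X)} \lesssim_t N^{\frac d2 + s - \gamma}.
\end{equation*}
Checking exponents, this means $\|K_{N,t}\|_{L^{p'}} \lesssim N^{\frac d p - \frac12}$ when $p<\frac{2d}{d+1}$ and $\lesssim N^{d/2}$ when $p>\frac{2d}{d+1}$, with an $N^\epsilon$ loss allowed at $p=\frac{2d}{d+1}$.

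\textbf{Step 2 (oscillatory expansion).} We use the uniform expansions of $\varphi_n$ near the origin and near the cut locus advertised in the abstract. For $nr\lesssim 1$ we have $|\varphi_n|\lesssim 1$. In the bulk, $\varphi_n(r) \approx (nr)^{-\frac{d-1}{2}}\,(\text{factor near the cut locus})\cos\big((n+\rho'/2)r - c\big)$ plus lower-order terms, with the symmetric Bessel-type behaviour near the antipodal set. Inserting this, $K_{N,t}(r)$ becomes $N^{d-1}(Nr)^{-\frac{d-1}{2}}$ times Weyl sums $\sum_{n} a_n e^{-it\lambda_n^2 \pm i n r}$ with smooth, slowly varying amplitudes. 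The error terms are handled by summation by parts.

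\textbf{Step 3 (Weyl sums; the main obstacle).} This is the step I expect to be hardest. The difficulty is a uniform-in-$r$ bound for a.e.\ $t$. I would use the standard fact, from Diophantine approximation plus the Weyl/van der Corput bound, that for a.e.\ $t$,
\begin{equation*}
\sup_{r}\Big|\sum_{n=N+1}^{M} e^{-itn^2 + i n r}\Big| \lesssim_{t,\epsilon} N^{\frac12+\epsilon}, \qquad M\le 2N,
\end{equation*}
because a.e.\ $t$ has continued-fraction denominators of controlled growth. Abel summation then transfers this bound to the smooth amplitudes. Combining it with the trivial bound near the origin gives
\begin{equation*}
|K_{N,t}(r)| \lesssim \min\Big\{N^d,\; N^{\frac{d-1}{2}} r^{-\frac{d-1}{2}} N^{\frac12+\epsilon}\Big\},
\end{equation*}
with the analogous behaviour near the cut locus. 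Integrating $|K_{N,t}|^{p'}$ against the volume density $r^{d-1}$ over $r\in(0,\pi)$ gives $\|K\|_{p'}\lesssim N^{\frac dp-\frac12}$ when $p<\frac{2d}{d+1}$, the integral being dominated by the region $r\sim 1/N$. For $p>\frac{2d}{d+1}$ the integral is dominated by $r\sim 1$ and gives $N^{\frac d2}$, and at $p=\frac{2d}{d+1}$ it gives a logarithmic loss. The $\epsilon$ in the bulk term must be removed for $p\ne \frac{2d}{d+1}$ to obtain the sharp $\gamma$. I would first try a refined a.e.\ estimate, namely a square-function/$L^2_t$ bound $\int_0^{T}\sup_r|\cdot|^2\,dt\lesssim N\log N$ followed by Borel--Cantelli over dyadic $N$. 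If that does not suffice, I would keep the $\epsilon$ loss, which is absorbed whenever the exponent is strict. Corollary \ref{corollary3} then gives $u(\cdot,t)\in C^{\gamma'}$; when $\gamma\ge1$ we pass to $\gamma'=1$ by monotonicity of the Besov scale.

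\textbf{Step 4 (part (\ref{talbot_part2})).} Part (\ref{talbot_part2}) follows from the standard fact that the graph of a $C^\beta$ function on a compact $d$-dimensional manifold, $\beta\le 1$, has upper Minkowski dimension at most $d+1-\beta$. To see this, cover $\X$ by $\asymp\varepsilon^{-d}$ balls of radius $\varepsilon$; over each ball the graph has oscillation at most $\varepsilon^\beta$ and so needs $\lesssim \varepsilon^{\beta-1}$ balls. This count applies to both $\mathrm{Re}\,u$ and $\mathrm{Im}\,u$, with $\beta=\gamma'$. At the endpoint $p=\frac{2d}{d+1}$ we use $\beta = s-\epsilon$ for every $\epsilon>0$ and let $\epsilon\to0$, which gives exactly $\gamma''=s$.
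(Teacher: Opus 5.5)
Your overall architecture matches the paper's: Young's inequality for $P_Nu(\cdot,t)$ against a zonal kernel, the oscillatory expansion of $Z_n$, almost-everywhere Weyl sum bounds, then Corollary~\ref{corollary3} and a covering count for the graph. But Step~3 fails near the origin. For $r\lesssim 1/N$ you use only the trivial bound $|K_{N,t}(r)|\lesssim\sum_{n=N+1}^{2N}d_n\lesssim N^d$. Since $\mu(B(o,1/N))\asymp N^{-d}$, this region alone gives $\|K_{N,t}\|_{L^{p'}(B(o,1/N))}\lesssim N^{d-\frac{d}{p'}}=N^{\frac dp}$. Your claim that the integral is dominated by $r\sim 1/N$ is inconsistent with your own bound, whose $\min$ equals $N^d$ at $r=0$. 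The resulting bound is too weak for every $p<2$: it is off by $N^{\frac12}$ from the required $N^{\frac dp-\frac12}$ when $p<\frac{2d}{d+1}$, and it exceeds the required $N^{\frac d2}$ when $\frac{2d}{d+1}\le p<2$. At $p=1$, for instance, you only get $\|K_{N,t}\|_{L^\infty}\lesssim N^d$, hence $\gamma=s-\frac d2$ instead of $s-\frac{d-1}{2}$.

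The missing idea is to exploit the cancellation in $n$ also for $r\lesssim 1/N$. The expansion in Theorem~\ref{thm_oscillatory_zn}(\ref{part2}) is uniform on $[0,\frac\pi2]$. For $\theta\le 1/N$ its amplitudes still satisfy $|\psi^\pm_n(\theta)|\lesssim n^{d-1}$ and $|\psi^\pm_n(\theta)-\psi^\pm_{n-1}(\theta)|\lesssim n^{d-2}$, so Lemma~\ref{Weyl_estimate} gives $|H_{N,t}(\theta)|\lesssim_t N^{d-\frac12}$ there; see (\ref{thm_talbot_pf_eq6}). Only with the resulting bound $|H_{N,t}(\theta)|\lesssim_t N^{d-\frac12}\langle N\theta\rangle^{-\frac{d-1}{2}}$ of (\ref{kernel_estimate1}) is the $L^{p'}$ norm dominated by $\theta\sim 1/N$ and of size $N^{\frac dp-\frac12}$.

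Two further steps are missing.

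\textbf{The cut locus.} For $P^d(\C)$, $P^d(\Hb)$, $P^{16}(Cay)$ the behaviour near the cut locus is neither symmetric nor ``analogous'' to that near the origin. There $\alpha\neq\beta$, and by Theorem~\ref{thm_oscillatory_zn}(\ref{part3}) the amplitudes are of size $n^{\beta+\frac d2}\langle n(\pi-\theta)\rangle^{-(\beta+\frac12)}$. In particular $|Z_n(\pi)|\approx n^{\beta+\frac d2}$, larger than your bulk size $n^{\frac{d-1}{2}}$. Also, the volume density there is $(\pi-\theta)^{M_2}$, not $r^{d-1}$. The paper estimates this region separately in (\ref{kernel_estimate2}) and (\ref{thm_talbot_pf_eq12}), and checks via Tables~\ref{table:1} and~\ref{table:2} that it never dominates the contribution from the origin. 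Your integration over $(0,\pi)$ against $r^{d-1}$ skips this.

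\textbf{The $\epsilon$-loss.} Keeping the $N^{\frac12+\epsilon}$ loss only gives $u(\cdot,t)\in C^{\gamma-}$ in part~(\ref{talbot_part1}) when $\gamma<1$. That is weaker than the statement for $p\neq\frac{2d}{d+1}$, so the loss is not ``absorbed''. Your $L^2_t$/Borel--Cantelli refinement would still leave a logarithmic factor. The paper obtains the exact exponent by invoking Lemma~\ref{Weyl_estimate} in the loss-free form $\lesssim_t N^{p+\frac12}$. As you observe, part~(\ref{talbot_part2}) is unaffected by this $\epsilon$.
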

\begin{remark} \label{smoothing_effect}
The part (\ref{talbot_part1}) of Theorem \ref{thm_talbot} means that if the initial data $f \in B^{\frac{d}{2}+s}_{p,\infty}(\X),$ $p \in [1,\infty]$ then for almost every time instance $t$, the propagation $u(\cdot,t) \in C^{\gamma'}(\X)$. In fact, the proof of Theorem \ref{thm_talbot} will show that for almost every time instance $t$, the propagation $u(\cdot,t) \in B^{\gamma}_{\infty,\infty}(\X)$.
\end{remark}

The corresponding region of admissible pairs for the $L^p$ scale of Littlewood-Paley projections and the extra regularity parameter $s$ obtained in Theorem \ref{thm_talbot}, is given by the shaded region in Figure \ref{figure}.
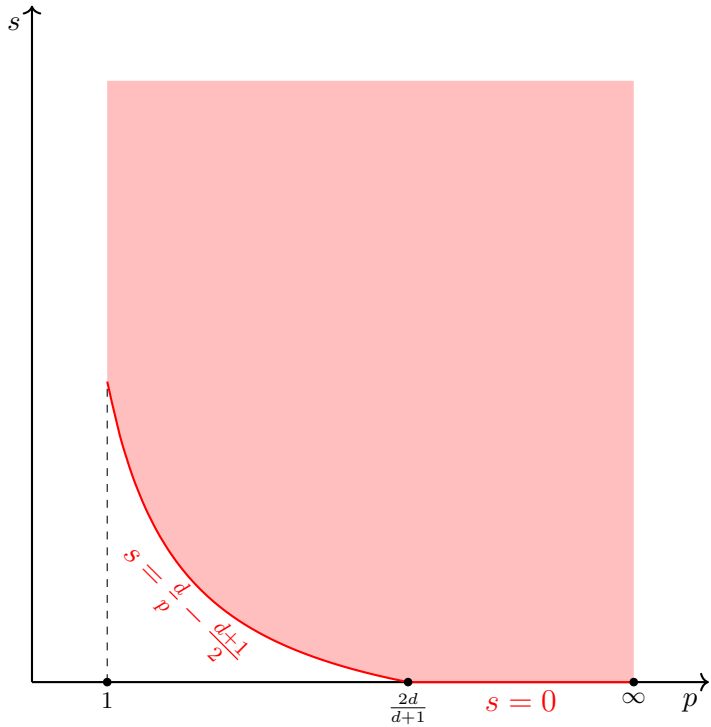
\begin{figure}[h]
\begin{center}
\begin{tikzpicture}
	%\draw[help lines] (0,0) grid (9,9);
	      \draw [->, thick] (0,0) -- (9,0) node [below left] {\small{$p$}};
	\draw [->, thick] (0,0) -- (0,9)  node [below left] {\small$s$};
\fill[pink] (8,0)--(5,0) -- plot[domain=5:1, smooth] (\x,{(5/\x)-1}) -- (1,4) -- (1,8) -- (8,8)-- cycle;	
	\draw[red][thick] plot[domain=5:1, smooth] (\x,{(5/\x)-1});
	\draw[red][thick] (5,0)--(8,0);
	\pgftext[at={\pgfpoint{2cm}{1cm}}, rotate=-45]{\textcolor{red}{$s=\frac{d}{p}-\frac{d+1}{2}$}};
	\draw [fill] (5,0) circle [radius=.05] node [below] {\scriptsize$\frac{2d}{d+1}$};
	\draw [fill] (1,0) circle [radius=.05] node [below] {\scriptsize$1$};
	\draw [dashed] (1,0) -- (1,4);
     \pgftext[at={\pgfpoint{6.5cm}{-0.25cm}}, rotate=0]{\textcolor{red}{$s=0$}};	
     \draw [fill] (8,0) circle [radius=.05] node [below] {\scriptsize$\infty$};
   	\end{tikzpicture}
\end{center}
\caption{$L^p$ scale of Littlewood-Paley projections and the extra regularity parameter $s$}
\label{figure}
\end{figure}

As an immediate consequence, we obtain the following result in the spirit of bounded variation initial data on $\Sb^1$ (\ref{Oskolkov_setting}):
\begin{corollary} \label{talbot_corollary}
Let $\X$ be a $d$-dimensional rank one Riemannian symmetric space of compact type $(d \ge 2)$, $f:\X \to \R,\:p=\frac{2d}{d+1}$ and assume that  $\|P_Nf\|_{L^p(\X)} \lesssim N^{-\frac{d}{p}}$, for $N\ge 1$, dyadic. Then for almost all $t$, the solution $u(\cdot,t)$ to (\ref{schrodinger}) is in $C^{\frac{1}{2}-}\left(\X\right)$ and hence 
\begin{equation*}
\max\left\{\overline{\dim}_M\left(Re\left(u(\cdot,t)\right)\right)\:,\:\overline{\dim}_M\left(Im\left(u(\cdot,t)\right)\right)\right\} \le d+\frac{1}{2}\:.
\end{equation*}
\end{corollary}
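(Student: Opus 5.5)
We obtain Corollary \ref{talbot_corollary} as the special case $p=\frac{2d}{d+1}$ of Theorem \ref{thm_talbot}, reading off the exponents at the endpoint. First we put the hypothesis into the form required there. For $p=\frac{2d}{d+1}$ we have $\frac{d}{p}=\frac{d+1}{2}$. Writing the assumed decay as $N^{-\frac{d}{p}}=N^{-(\frac{d}{2}+s)}$ forces $s=\frac{d}{p}-\frac{d}{2}=\frac12$. Since $s=\frac12>0$, the pair $(p,s)$ lies in the admissible range of Theorem \ref{thm_talbot} in the endpoint case $p=\frac{2d}{d+1}$, where strict positivity of $s$ is required. Hence $\|P_Nf\|_{L^p(\X)}\lesssim N^{-(\frac d2+s)}$ for all dyadic $N\ge1$ with $s=\frac12$, and $f$ is real-valued, so every hypothesis of the theorem holds.

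Next we apply part (\ref{talbot_part1}). In the case $p=\frac{2d}{d+1}$ it gives $\gamma=s-=\frac12-$, meaning that for every $\epsilon>0$, for almost every $t$, we have $u(\cdot,t)\in C^{\frac12-\epsilon}(\X)$. Since $\gamma<1$, $\gamma'=\gamma$. To obtain a single full-measure set of times valid for all $\epsilon$, we take $\epsilon=1/k$ and intersect the countably many full-measure sets. This gives $u(\cdot,t)\in C^{\frac12-}(\X)$ for almost all $t$.

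Finally we apply part (\ref{talbot_part2}). In the range $\frac{2d}{d+1}\le p\le\infty$ we have $\gamma''=s=\frac12$, so $\min\{\gamma'',1\}=\frac12$. The bound on the upper Minkowski dimension of the real and imaginary parts is therefore $(d+1)-\frac12=d+\frac12$, as claimed.

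There is no real obstacle here. The only points needing care are the arithmetic identification $s=\frac12$ and the fact that the endpoint case of part (\ref{talbot_part2}) already gives the sharp value $\gamma''=s$, with no loss of $\epsilon$. So the dimension bound needs no limiting argument, even though the Hölder exponent in part (\ref{talbot_part1}) is only $\frac12-$.
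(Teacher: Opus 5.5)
Your proposal is correct and follows the paper's own argument: the paper also observes that $p=\frac{2d}{d+1}$ forces $s=\frac{d}{p}-\frac{d}{2}=\frac12>0$, and then reads off both conclusions directly from Theorem \ref{thm_talbot}. Your additional points are valid details that the paper leaves implicit: intersecting the full-measure sets over $\epsilon=1/k$, and noting that part (\ref{talbot_part2}) already gives $\gamma''=s$ at the endpoint.
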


The proof of Theorem \ref{thm_talbot} follows the general scheme of the arguments employed by Erdo\u{g}an et. al. \cite{MathZ} in the special case of $\Sb^d$, namely showing that for almost every fixed time instance, the Schr\"odinger propagation is in the Besov space $B^{\gamma}_{\infty,\infty}$, hence by the Besov embedding (Corollary \ref{corollary3}), it is in the H\"older space $C^\gamma$ and from there the upper bound on the fractal dimension would follow. The two main ingredients in carrying out the above scheme are the following:

\medskip

(1) Uniform oscillatory expansions of the zonal spherical functions:

\begin{itemize}
\item Proving the inclusion in the Besov space $B^{\gamma}_{\infty,\infty}$ follows by writing Littlewood-Paley projections of the (time fixed) Schr\"odinger propagation as a right convolution with a $K$-biinvariant kernel and then by Young's inequality, it boils down to getting pointwise estimates for the kernel. This however becomes tricky as it requires to estimate the joint oscillation afforded by the Sch\"odinger multiplier and the zonal spherical functions. In the case of $\Sb^d$ this is quite easily overcome by looking at a two-term Bessel expansion of the Gegenbauer polynomials uniformly in $\left[0,\frac{\pi}{2}\right]$ \cite[Theorem 1.2]{Baratella} and then exploiting the symmetry of the Gegenbauer polynomials in $\left[\frac{\pi}{2},\pi\right]$.

\medskip

\item In the general setting of rank one Riemannian symmetric space of compact type however, the zonal spherical functions do not enjoy such symmetries. In fact, due to the geometric technicality of concentration of higher dimensional cut loci, compared to the singleton antipodal points in $\Sb^d$ (compare the values of $M_1$ in Table \ref{table:1}), the behaviour of the zonal spherical functions are quite different near the cut locus, compared to near the origin (see Remark \ref{epxansion_symmetry_sphere}). This is made explicit in Theorem \ref{thm_oscillatory_zn}, where we obtain two oscillatory expansions of zonal spherical functions, uniformly near the origin and near the cut locus respectively, which may be of independent interest. The proof of Theorem \ref{thm_oscillatory_zn} requires to utilize a general $m$-term expansion obtained by Frenzen and Wong \cite{Frenzen} depending on the asymmetry of the Jacobi parameters $\alpha,\beta$ (see Table \ref{table:2}).   
\end{itemize}

\medskip

(2) Upper Minkowski dimension of the graph of a H\"older function:

\begin{itemize}
\item After establishing the H\"older continuity of the Schr\"odinger propagation for almost every fixed time instance, the conclusion about the fractal dimension follows from a geometric measure theory result of the form: the graph of any real-valued $C^\gamma$ function, $\gamma \in (0,1),$ has upper Minkowski dimension  $\le (d+1)-\gamma$, where $d$ is the topological dimension of the underlying space. In the case of a bounded domain in $\R^d$, this follows from a standard counting argument involving boxes (see Lemma \ref{gmt_lemma3}). In the case of $\Sb^d$, the situation is quite similar due to its canonical embedding into $\R^{d+1}$ (see \cite{Huynh}). 

\medskip

\item In the case of the other rank one Riemannian symmetric spaces of compact type, that is, projective spaces however, there is no such simple embedding available due to complicated topological obstructions such as Hopf fibrations. This prompts us to take a rather abstract approach by utilizing the finite stability of the upper Minkowski dimension and the local bi-Lipschitz structure of compact, connected Riemannian manifolds. These ideas lead us to study the problem in a vastly general setting of compact metric spaces that are endowed with a finite bi-Lipschitz structure. Then by gluing the local result (Lemma \ref{gmt_lemma3}), we obtain Theorem \ref{thm_gmt} and consequently, the desired result for compact, connected Riemannian manifolds (Corollary \ref{graph_corollary}).
\end{itemize}

This article is organized as follows. In Section \ref{sec2}, we recall the required preliminaries on rank one Riemannian symmetric spaces of compact type and harmonic analysis thereon and also fix our notations. In Section \ref{sec3}, we prove a genuine analogue of the classical Bernstein inequality for spherical polynomials (Lemma \ref{bernstein}), Theorem \ref{thm_cesaro} and a useful general condition for sufficiency of H\"older continuity (Corollary \ref{sufficiency_corollary}). In Section \ref{sec4}, Corollaries \ref{corollary1}, \ref{corollary2} and \ref{corollary3} are proved. In Section \ref{sec5}, we estimate the upper Minkowski dimension of graphs of real-valued H\"older functions on general compact metric spaces and compact, connected Riemannian manifolds: Theorem \ref{thm_gmt} and Corollary \ref{graph_corollary}. In Section \ref{sec6}, we obtain oscillatory expansions of zonal spherical functions of degree $\ge 1$ uniformly near the origin and near the cut locus respectively: Theorem \ref{thm_oscillatory_zn}. In Section \ref{sec7}, we prove our results on the Talbot effect: Theorem \ref{thm_talbot} and Corollary \ref{talbot_corollary}. Finally in Section \ref{sec8}, we conclude by making some remarks and posing some new problems. 

\section{Preliminaries}
\label{sec2}
\subsection{Some notations} \label{subsec2.1}
Throughout this article $c,C,\dots$ will be used to denote positive constants whose values may change on each occurrence. $\N$ will denote the set of positive integers. Two positive functions $f_1$ and $f_2$ will satisfy,
\begin{itemize}
\item  $f_1 \lesssim f_2$ if there exists $C\ge 1$ such that $f_1 \le Cf_2$;
\item  $f_1 \gtrsim f_2$ if there exists $C\ge 1$ such that $Cf_1 \ge f_2$;
\item $f_1 \asymp f_2$ if there exists $C\ge 1$ such that $\frac{1}{C}f_1 \le f_2 \le Cf_1$.
\end{itemize}
The notation $\lesssim_\varepsilon$ will also be used to denote the dependence of the parameter $\varepsilon$. In addition, if $f_1$ is complex-valued, we will write,
\begin{itemize}
\item $f_1=\mathcal{O}(f_2)$ to denote that $|f_1| \lesssim f_2$\:.
\end{itemize} 
We will denote $s+$ (or $s-$ respectively) to denote $s+\varepsilon$ (or $s-\varepsilon$ respectively) for all $\varepsilon>0$, with the implicit constants possibly depending on $\varepsilon$. For $s \in (0,1),$ we use the following notations:
\begin{equation*}
C^{s-}\left(\X\right):=\displaystyle\bigcap_{r<s} C^{r}\left(\X\right)\:,\text{ and } \:\: B^{s-}_{\infty,\infty}\left(\X\right):=\displaystyle\bigcap_{r<s} B^{r}_{\infty,\infty}\left(\X\right)\:.
\end{equation*}
Finally, for $x \in \R$, $\langle x \rangle:=\left(1+|x|^2\right)^{\frac{1}{2}}$.

\subsection{Harmonic analysis on rank one Riemannian symmetric spaces of compact type} \label{subsec2.2}

In this subsection, we briefly recall some preliminaries on rank one Riemannian symmetric spaces of compact type and harmonic analysis thereon. The relevant information can be found in \cite{Bonami,HelgasonDiff, Helgason,CT}.

Without loss of generality, the Riemannian measure $\mu$ and the metric on $\X$ can be renormalized so that the total measure of $\X$ is $1$ and the diameter of $\X$ is $\pi$. For a fixed origin $o \in \X$, the points with distance from $o$ equal to the diameter of $\X$ are the antipodal points of $o$. The collection of all antipodal points of $o$ is termed as the {\em antipodal manifold}. Denoting the dimension of the antipodal manifold as $M_1$ and the dimension of $\X$ as $d$, we introduce a non-negative integer $M_2$ so that $M_1+M_2+1=d$. Furthermore, if $\Ac(\theta)$ denotes the Riemannian measure of a geodesic sphere in $\X$, centered at $o$ with radius $\theta$, for $\theta \in [0,\pi]$, then it has the explicit form,
\begin{equation} \label{density}
\Ac(\theta)= C {\left(\sin \frac{\theta}{2}\right)}^{M_1} {(\sin \theta)}^{M_2}\:,
\end{equation}
where the constant $C>0$ is such that $\int_0^\pi \Ac(\theta) d\theta=1$. The specific values of the parameters $M_1,M_2$ are given in Table \ref{table:1}.
 
\begingroup

\setlength{\tabcolsep}{10pt} % Default value: 6pt
\renewcommand{\arraystretch}{1.5} % Default value: 1
\begin{table}[H]
\centering
\begin{tabular}{ ||p{1.7cm}|| p{1cm} | p{1cm} || p{1.7cm}|| p{1cm} | p{1cm}| }
 %\hline
 %\multicolumn{4}{|c|}{} \\
 \hline
 $\X$ & $M_1$ & $M_2$ & $\X$ & $M_1$ & $M_2$ \\
 \hline 
$\Sb^d$   & $0$   & $d-1$ & $P^d(\Hb)$ & $d-4$ & \:\:$3$  \\
$P^d(\R)$ & $d-1$   & \:\:$0$ & $P^{16}(Cay)$ & \:\:$8$ & \:\:$7$ \\
$P^d(\C)$ & $d-2$  & \:\:$1$ & & &   \\
\hline
\end{tabular}
\medskip
\caption{}
\label{table:1}
\end{table}
\endgroup

We recall from the introduction that $\mathscr{H}_n(\X)$ are the finite dimensional eigenspaces of $\Delta$ with eigenvalues 
\begin{equation} \label{rank1_eigenvalue_form} 
\lambda^2_n =sn(sn+\alpha+\beta+1)\:,\:\:n \in \N \cup \{0\}\:,
\end{equation}  
with 
\begin{equation} \label{value_of_s}
s=\begin{cases}
	 1  &\text{ if }  \X = \Sb^d,\:P^d(\C),\:P^d(\Hb),\:P^{16}(Cay), \\
	2 &\text{ if }  \X = \:P^d(\R) \:.
	\end{cases}	
\end{equation}
The parameters $\alpha, \beta$ and the eigenvalues $\lambda^2_n$ are given in Table \ref{table:2}.

\begingroup

\setlength{\tabcolsep}{10pt} % Default value: 6pt
\renewcommand{\arraystretch}{1.5} % Default value: 1
\begin{table}[h!]
\centering
\begin{tabular}{ ||p{1.7cm}|| p{1cm}| p{1cm}| p{2.3cm}|  }
 %\hline
 %\multicolumn{4}{|c|}{} \\
 \hline
 $\X$ & $\alpha$ & $\beta$ & $\lambda^2_n$\\
 \hline 
$\Sb^d$   & $\frac{d-2}{2}$   & $\frac{d-2}{2}$ & $n(n+d-1)$ \\
$P^d(\R)$ & $\frac{d-2}{2}$   & $\frac{d-2}{2}$  & $2n(2n+d-1)$ \\
$P^d(\C)$ & $\frac{d-2}{2}$  & \:\:$0$  & $n\left(n+\frac{d}{2}\right)$ \\
$P^d(\Hb)$ & $\frac{d-2}{2}$ & \:\:$1$ & $n\left(n+1+\frac{d}{2}\right)$   \\
$P^{16}(Cay)$ & \:\:$7$ & \:\:$3$  & $n(n+11)$ \\
\hline
\end{tabular}
\medskip
\caption{}
\label{table:2}
\end{table}
\endgroup
\noindent Let $d_n :=dim\left(\mathscr{H}_n(\X)\right)$ and $\{Y_{n,j}\}_{j=1}^{d_n}$ be an orthonormal basis of $H_n(\X)$. Then $d_n$ satisfies, 
\begin{equation} \label{dimension_growth}
d_n \asymp \left(1+n\right)^{d-1}\:.
\end{equation}

\begin{definition} \label{defn_spherical_poly}
A {\it spherical polynomial in $\X$ of degree at most $m$} $\in \N \cup \{0\}$, is an element in the linear span of $\{Y_{n,j} : 1 \le j \le d_n, 1 \le n \le m\}$\:.
\end{definition}

We have the following Bernstein type estimate for directional derivatives of spherical polynomials:

\begin{lemma} \cite[Lemma 3.2]{Platonov} \label{primitive_bernstein}
Let $x \in \X$ and $v$ be a unit tangent vector at $x$. Let $\gamma_{x,v}$ be the geodesic starting at $x$ with initial velocity $v$. Then for $t \in \R$ and $\Phi$, any spherical polynomial of degree at most $m$ on $\X$, we have
\begin{equation*}
\left|\frac{d}{ds}\Phi\left(\gamma_{x,v}(s)\right)\large{|}_{s=t}\right| \le m \|\Phi\|_{L^\infty(\X)}\:.
\end{equation*}
\end{lemma}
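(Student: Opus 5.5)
The plan is to reduce the statement to a one-dimensional Bernstein inequality for trigonometric polynomials, using the structure of $\X$ as a Zoll manifold. The key point is that geodesics on $\X$ are closed with common period $2\pi$ (diameter $\pi$; for $P^d(\R)$ the period is $\pi$, which only helps).

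First I fix $x$ and the unit vector $v$ and consider $\phi(s):=\Phi(\gamma_{x,v}(s))$. I claim $\phi$ is a trigonometric polynomial of degree at most $m$ in $s$ (at most $sm$ with period $2\pi$, or equivalently degree $m$ in the variable $2s$ for $P^d(\R)$).

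To see this, I use the Cartan decomposition. There is $u\in U$ with $x=u\cdot o$ and $v=u_*H$ for a unit vector $H\in\mathfrak a$, so that $\gamma_{x,v}(s)=u\exp(sH)\cdot o$. Every element of $\mathscr{H}_n(\X)$ has the form $y\mapsto\langle \delta_n(y)\mathbf e, w\rangle$, so
\begin{equation*}
\phi(s)=\sum_{n\le m}\langle \delta_n(u)\delta_n(\exp sH)\mathbf e_n, w_n\rangle .
\end{equation*}
Diagonalizing $\delta_n(\exp sH)$ along the weights of $\delta_n$ restricted to $\mathfrak a$, each entry is a combination of $e^{i\mu(H)s}$. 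Here $\mu$ ranges over the weights of $\delta_n$, and $|\mu(H)|$ is at most the highest restricted weight evaluated on $H$. With the normalization (diameter $\pi$, Table \ref{table:2}), the highest weight of $\delta_n$ evaluates to $n$ on $H$, or $2n$ for $P^d(\R)$, where $\lambda_n^2 = 2n(2n+d-1)$. The frequencies are integers because the period is $2\pi$. So $\phi$ is a trigonometric polynomial of degree at most $m$ (respectively $2m$).

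Next I apply the classical Bernstein inequality for trigonometric polynomials: $\|\phi'\|_\infty\le m\|\phi\|_\infty\le m\|\Phi\|_{L^\infty(\X)}$. Evaluating at $s=t$ gives the claim. For $P^d(\R)$ I have to check the constant: the degree in $s$ is $2m$, which would give only $2m$. I would handle this by noting that only even $n$-representations of the sphere occur, so I pass to the double cover $\Sb^d\to P^d(\R)$. Lifting $\Phi$ gives a spherical polynomial on $\Sb^d$ of degree $2m$, while the metric is rescaled by a factor of $2$ (diameter $\pi$ versus $\pi/2$). The chain rule then gives $\frac{d}{ds}$ with the factor $\tfrac12\cdot 2m=m$.

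I expect the main obstacle to be this normalization step: verifying precisely that the maximal frequency $\max_\mu|\mu(H)|$ equals $n$ (or $sn$) under the chosen metric. I would check it against the eigenvalue formula $\lambda_n^2=sn(sn+\alpha+\beta+1)$. The leading term $(sn)^2$ is $|\text{highest weight}|^2$ by Freudenthal's formula $\lambda^2=\langle\mu,\mu+2\rho\rangle$. Reading $\rho$ off against $\alpha+\beta+1$ then confirms that the highest weight pairs with unit $H$ to give $sn$. Alternatively, and more concretely, I would use the Jacobi-polynomial description, $\phi$ restricted to radial directions being $P_n^{(\alpha,\beta)}(\cos s)$, to confirm the degree in $e^{is}$.
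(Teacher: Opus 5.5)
The paper gives no proof of this lemma; it only cites Platonov. Your argument is correct and is the standard one. Every unit-speed geodesic has the form $s\mapsto u\exp(sH)\cdot o$, so $\phi=\Phi\circ\gamma_{x,v}$ is a $2\pi$-periodic exponential sum whose frequencies are values $\mu(H)$ of weights of the $\delta_n$, $n\le m$. It is therefore a trigonometric polynomial of degree $\le m$, and the classical Bernstein inequality (valid for complex coefficients) finishes the proof. The frequency bound is quickest as $|\mu(H)|\le|\mu|\le|\lambda_n|=|\lambda_n(H)|$. Here weights lie in the convex hull of the Weyl orbit of $\lambda_n$, $\lambda_n$ vanishes on $\mathfrak t\cap\mathfrak k$, and $|\lambda_n|=n$ comes from the leading term of the Casimir eigenvalue. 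If you use the Jacobi form of $Z_n$ instead, you also need $\langle v_{\lambda_n},\mathbf e_n\rangle\neq 0$ (part of Cartan--Helgason). Only then is the top frequency of the zonal function really $\lambda_n(H)$.

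Your treatment of $P^d(\R)$ mixes two normalizations. The lemma needs diameter $\pi$. In that normalization, closed geodesics on \emph{every} one of these spaces, including $P^d(\R)$, have length $2\pi$ (twice the diameter). $P^d(\R)$ is then the quotient of the sphere of radius $2$, and a unit-speed geodesic lifts to a great circle with unit-sphere angle $\sigma=s/2$. The lift of $\Phi$ is an even harmonic polynomial of degree $\le 2m$, so only even frequencies in $\sigma$ occur. Hence $\phi$ is a trigonometric polynomial of degree $\le m$ in $s$. This is exactly your chain-rule count $\tfrac12\cdot 2m=m$, and $P^d(\R)$ is not exceptional at all.

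The claims ``the period is $\pi$, which only helps'' and ``the degree in $s$ is $2m$'' belong to the unit-sphere normalization (diameter $\pi/2$). That is also the normalization behind the eigenvalues $2n(2n+d-1)$ in Table~\ref{table:2}, which is why your Freudenthal check returned $2n$. In that normalization the shorter period hurts rather than helps, and the constant $m$ is false. For example, the even zonal harmonic $\frac12(3(x\cdot e)^2-1)$ on $\Sb^2$ lies in $\mathscr{H}_1(P^2(\R))$ and has sup norm $1$. Along a great circle through $e$ it restricts to $\frac14(3\cos 2s+1)$, whose derivative reaches $\frac32>1$. So your final conclusion is right, but you should fix the diameter-$\pi$ normalization once and drop the contradictory intermediate statements.
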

The left action of $U$ on $\X$ by isometries is transitive and hence given any $x \in \X$, there exists $u_x \in U$ such that $u_x \cdot o=x$. Moreover as $K$ is the stabilizer of $o$ in $U$, a function $f$ on $\X$ is naturally identified as a function $f$ on $U$ which is invariant under the right $K$-translations. Also as the Riemannian volume measure $\mu$ is given by the push-forward of the Haar measure on $U$, the convolution on $U$ descends to a convolution on $\X$:
\begin{equation}\label{convolution}
\left(f*g\right)(x):= \int_{\X} f(u_y\cdot o)g(u^{-1}_y u_x \cdot o)\:d\mu(y)\:,\:\:x\in \X\:,
\end{equation}
whenever the integral exists.

By the two-point homogenity, $K$ fixes $o$ and acts transitively on the set of points at a given distance from $o$ (a speciality of rank one). Thus $K$-biinvariant functions on $U$ can be identified with functions on $\X$ {\em radial around $o$}, that is functions $f$ on $\X$ such that $f(x)$ only depends on the geodesic distance of $x$ from $o$. Henceforth, we will refer to such functions simply as radial functions.

In the definition of convolution (\ref{convolution}), if the function $g$ is radial, then we note that
\begin{equation*}
g(u^{-1}_y u_x \cdot o)=g\left(d\left(o,u^{-1}_y u_x \cdot o\right)\right)\:.
\end{equation*}
Then by left $U$-invariance of the metric, we further have
\begin{equation*}
g\left(d\left(o,u^{-1}_y u_x \cdot o\right)\right)=g\left(d\left(u_y\cdot o, u_x \cdot o\right)\right)=g(d(y,x))\:.
\end{equation*} 
Hence, the convolution (\ref{convolution}) takes the special form,
\begin{equation}\label{convolution_radial}
\left(f*g\right)(x):= \int_{\X} f(y)g(d(x,y))\:d\mu(y)\:,\:\:x\in \X\:,
\end{equation}
whenever the integral exists.

For $n \in \N \cup \{0\}$, $proj_nf$, the projection of functions $f$ on $\X$ onto $\mathscr{H}_n(\X)$, is given as a convolution:
\begin{equation} \label{projection_as_convolution}
proj_nf=f*Z_n\:,
\end{equation}
where $Z_n$ is the {\em zonal spherical function of degree $n$}. $Z_n$ is a radial function on $\X$ given by,
\begin{equation*}
Z_n(d(x,y)):=\sum_{j=1}^{d_n} Y_{n,j}(x)\:\overline{Y_{n,j}(y)}\:.
\end{equation*}
These functions are more explicitly understood in terms of Jacobi polynomials:
\begin{equation} \label{in_terms_jacobi}
Z_n(d(x,y)):= d_n \frac{P^{(\alpha,\beta)}_n{\left(\cos\left(\frac{d(x,y)}{s}\right)\right)}}{P^{(\alpha,\beta)}_n(1)}\:,\:\:x,y \in \X\:,
\end{equation}
where $s$ is as in (\ref{value_of_s}).  The Jacobi polynomials satisfy the fundamental symmetry property:
\begin{equation} \label{jacobi_symmetry}
P^{(\alpha,\beta)}_n(-x)=(-1)^n\:P^{(\beta,\alpha)}_n(x)\:,\: x\in [-1,1]\:.
\end{equation}
As $\alpha=\frac{d-2}{2}$ always in our setting, we also have for $n \in \N$,
\begin{equation} \label{jacobi_at_1}
P^{(\alpha,\beta)}_n(1) = {n+\alpha \choose n} \asymp n^\alpha=n^{\frac{d-2}{2}}\:.
\end{equation}
The following expansion of the Jacobi polynomials will be relevant for us:
\begin{lemma} \cite{Frenzen}\label{frenzen}
For $\alpha>-\frac{1}{2},\:\alpha-\beta>-2m$ and $\alpha +\beta \ge -1$, we have
\begin{eqnarray*}
P^{(\alpha,\beta)}_n(\cos \theta)&=&\frac{\Gamma(n+\alpha+1)}{n!}\left(\sin\frac{\theta}{2}\right)^{-\alpha}\left(\cos\frac{\theta}{2}\right)^{-\beta}\left(\frac{\theta}{\sin\theta}\right)^{\frac{1}{2}}\\
&& \times\left[\sum_{l=0}^{m-1} A_l(\theta)\frac{J_{\alpha + l}(N \theta)}{N^{\alpha+l}}+\theta^{\alpha + m} \mathcal{O}\left(N^{-m}\right)\right]\:,
\end{eqnarray*}
where 
\begin{equation*}
\left|A_l(\theta)\right| \lesssim \theta^l\:,\:N=n+\frac{\alpha + \beta +1}{2}\:,\:\theta \in \left[0,\frac{\pi}{2}\right]\:.
\end{equation*}
\end{lemma}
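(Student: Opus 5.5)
Since this lemma is quoted from \cite{Frenzen}, we only indicate how we would prove it if we had to. The natural route is Olver's method for second-order differential equations with a regular singular point, using Bessel functions as comparison functions. Put $N=n+\frac{\alpha+\beta+1}{2}$ and
\begin{equation*}
u(\theta):=\left(\sin\tfrac{\theta}{2}\right)^{\alpha+\frac12}\left(\cos\tfrac{\theta}{2}\right)^{\beta+\frac12}P^{(\alpha,\beta)}_n(\cos\theta)\:.
\end{equation*}
The Jacobi differential equation becomes the Liouville normal form
\begin{equation*}
u''+\left[N^2+\frac{\frac14-\alpha^2}{4\sin^2\frac{\theta}{2}}+\frac{\frac14-\beta^2}{4\cos^2\frac{\theta}{2}}\right]u=0\:.
\end{equation*}
The comparison equation is $v''+\left[N^2+\frac{\frac14-\alpha^2}{\theta^2}\right]v=0$, which has the recessive solution $v=\theta^{\frac12}J_\alpha(N\theta)$. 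The two potentials differ by
\begin{equation*}
\psi(\theta)=\left(\tfrac14-\alpha^2\right)\left(\frac{1}{4\sin^2\frac{\theta}{2}}-\frac{1}{\theta^2}\right)+\frac{\frac14-\beta^2}{4\cos^2\frac{\theta}{2}}\:,
\end{equation*}
which is analytic and bounded on $[0,\frac{\pi}{2}]$, away from the singularity at $\theta=\pi$. The prefactor $\left(\frac{\theta}{\sin\theta}\right)^{1/2}$ in the statement comes from rewriting $(\sin\frac{\theta}{2}\cos\frac{\theta}{2})^{1/2}$ against $\theta^{1/2}$.

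First, we seek a formal solution
\begin{equation*}
u(\theta)\approx\theta^{\frac12}\sum_{l\ge0}A_l(\theta)\frac{J_{\alpha+l}(N\theta)}{N^{\alpha+l}}\:.
\end{equation*}
Using the Bessel recurrences $J'_\nu=J_{\nu-1}-\frac{\nu}{z}J_\nu$ and $\frac{2\nu}{z}J_\nu=J_{\nu-1}+J_{\nu+1}$, we substitute and collect terms. This gives a recursive system of first-order linear ODEs for $A_l$, with $A_0\equiv1$ (up to normalization) and each $A_{l+1}$ an integral of $\psi$ and derivatives of earlier $A_j$. Regularity at $\theta=0$ fixes the constants of integration. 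Analyticity of $\psi$ then gives $A_l(\theta)=\mathcal O(\theta^l)$ on $[0,\frac{\pi}{2}]$; the vanishing to order $l$ comes from the structure of the recursion, as in Olver. The overall constant $\frac{\Gamma(n+\alpha+1)}{n!}$ is fixed by comparing both sides as $\theta\to0^+$. There $J_\alpha(N\theta)/N^\alpha\sim(\theta/2)^\alpha/\Gamma(\alpha+1)$, while on the Jacobi side $P_n^{(\alpha,\beta)}(1)=\binom{n+\alpha}{n}$, which is (\ref{jacobi_at_1}).

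Second, for the error term we truncate at $m$ terms and write $u=\theta^{1/2}\sum_{l<m}A_lJ_{\alpha+l}(N\theta)N^{-\alpha-l}+\varepsilon_m$. The remainder $\varepsilon_m$ satisfies an inhomogeneous equation. Its forcing term is $\mathcal O(\theta^{m}N^{-m})$ relative to the Bessel scale, because the recursion cancels all lower orders. We convert this into a Volterra integral equation, integrating from $0$ and using the kernel built from $\theta^{1/2}J_\alpha(N\theta)$ and $\theta^{1/2}Y_\alpha(N\theta)$. The hypothesis $\alpha>-\frac12$ makes the recessive solution at $0$ unique, so the Volterra formulation is legitimate and the initial condition is exactly the one satisfied by the Jacobi polynomial. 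The hypothesis $\alpha+\beta\ge-1$ keeps $N>0$, so the Bessel functions oscillate in the right regime.

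The main obstacle is the uniform bound $\theta^{\alpha+m}\mathcal O(N^{-m})$ on the whole of $[0,\frac{\pi}{2}]$, holding both in the transition zone $N\theta\lesssim1$ and in the oscillatory zone $N\theta\gg1$. We would introduce Olver's auxiliary weight functions $E,M$ for the Bessel pair. These satisfy $|J_\nu(z)|\lesssim M(z)/E(z)$ with $M(z)\asymp\min\{z^\nu,z^{-1/2}\}$. With these we would run the Picard iteration for the Volterra equation, bounding the kernel by $M(N\theta)M(N t)E(N\theta)/E(Nt)$. The total variation of the error-control function is then controlled by $\int_0^\theta t^{m}\,dt$-type integrals, weighted to produce the factor $\theta^{\alpha+m}$. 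The condition $\alpha-\beta>-2m$ is what makes the coefficient integrals defining the $A_l$, and the error integral, converge with the stated powers of $\theta$. We would check this by tracking the contribution of the $\beta$ part of $\psi$ at each step of the recursion. We expect this bookkeeping near $\theta=0$, namely showing that the error scales like $\theta^{\alpha+m}$ rather than merely $\mathcal O(N^{-m-\alpha})$, to be the delicate step.
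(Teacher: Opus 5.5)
The paper gives no argument for this lemma; it imports it from \cite{Frenzen}, where it is derived from an integral representation of $P^{(\alpha,\beta)}_n$. The condition $\alpha-\beta>-2m$ comes from that method. Your Liouville--Green route with Bessel comparison functions is a genuinely different derivation, and it is sound. It can even be made fully explicit. Put $e_l:=N^{-\alpha-l}\theta^{1/2}J_{\alpha+l}(N\theta)$, let $L$ be the operator of your normal form, and set $\mathcal{M}_lA:=A''-\frac{2\alpha+2l-1}{\theta}A'+\frac{l(2\alpha+l)}{\theta^2}A+\psi A$. The recurrence $J_\nu'=J_{\nu-1}-\frac{\nu}{z}J_\nu$ then gives $L[Ae_l]=(\mathcal{M}_lA)\,e_l+2A'e_{l-1}$. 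So the choice $A_0=1$, $A_{l+1}'=-\frac12\mathcal{M}_lA_l$, $A_{l+1}(0)=0$ yields exactly $L\bigl[\sum_{l<m}A_le_l\bigr]=-2A_m'e_{m-1}$. For instance, $A_1=-\frac12\int_0^\theta\psi$ reproduces the first coefficient of \cite{Frenzen}.

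What this buys: $\psi$ is analytic in $|\theta|<\pi$ for every real $\beta$, so your argument on $[0,\frac{\pi}{2}]$ needs no relation between $\alpha,\beta,m$. For $m=1$ the statement already follows from the Hilb-type formula (Theorem 8.21.12 of \cite{Szego}), which is valid for all real $\beta$. Your route would therefore remove the restriction on $m$ in Lemmas \ref{jacobi_lemma1} and \ref{jacobi_lemma2}. The price is that you must single out the Jacobi solution among solutions of the ODE and carry out Olver's Volterra analysis at the regular singular point. An integral representation has the normalization built in.

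Several side remarks in your proposal need correcting.
\begin{itemize}
\item Your explanation of $\alpha-\beta>-2m$ is spurious: nothing in your argument uses it.
\item The bound $A_l=\mathcal{O}(\theta^l)$ does not follow from analyticity alone. It uses that $\psi$ is even, so $A_l$ has parity $(-1)^l$, together with $\mathcal{M}_l\theta^k=(l-k)(l-k+2\alpha)\theta^{k-2}+\psi\theta^k$, which kills the $\theta^{l-2}$ term. The same identity shows that a nonzero constant in $A_{l+1}$ would make the next integrand non-integrable at $0$; this is what fixes the constants of integration.
\item For $-\frac12<\alpha<0$ the Jacobi solution $\sim\theta^{\frac12+\alpha}$ is the dominant, not the recessive, Frobenius branch. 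This is harmless here, since $\alpha=\frac{d-2}{2}\ge0$.
\item In Olver's notation it is $M/E$, not $M$, that behaves like $\min\{z^\alpha,z^{-1/2}\}$. For $t\le\theta$ the kernel bound carries the factor $E(Nt)/E(N\theta)$, not its reciprocal.
\item The weight $\theta^{\alpha+m}$ is not the delicate point. With residual $-2A_m'e_{m-1}$ and $A_m'=\mathcal{O}(\theta^{m-1})$, the Volterra estimate bounds the error inside the bracket by $\theta^{\alpha+2m}$ for $N\theta\le1$ and by $\theta^{m-\frac12}N^{-\alpha-m-\frac12}$ for $N\theta\ge1$. Both are $\le\theta^{\alpha+m}N^{-m}$ because $\alpha>-\frac12$.
\end{itemize}
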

For more details about Jacobi polynomials see \cite{Szego}.

As a consequence of (\ref{projection_as_convolution}), the Ces\`aro means are also given as right convolutions with a radial kernel:
\begin{equation*}
C^\delta_Nf=f*K^\delta_N\:,
\end{equation*}
that is,
\begin{equation*}
C^\delta_Nf(x)=\int_{\X} f(y)\:
K^\delta_N(d(x,y))\:d\mu(y)\:,
\end{equation*}
where 
\begin{equation*}
K^\delta_N(d(x,y))=\frac{1}{A^\delta_N} \sum_{n=0}^N A^\delta_{N-n}\:Z_n(d(x,y))\:.
\end{equation*}
We have,
\begin{equation} \label{integral=1}
\int_{\X} K^\delta_N(d(x,y))\:d\mu(y)=1\:,\:\: x \in \X\:.
\end{equation}
We also have the pointwise estimate of the Ces\`aro kernel on $\X=\Sb^d,\:P^d(\C),\:P^d(\Hb)$ and $P^{16}(Cay)$, for $\delta \ge \delta^*$ (defined in \ref{cesaro_delta}) by \cite[Eqn. (B.1.13), Lemma B.1.2]{Dai} and Table \ref{table:2}:
\begin{equation} \label{cesaro_kernel_estimate}
0 \le K^\delta_N(d(x,y)) \lesssim N^{-1}\left(1-\cos(d(x,y)) +N^{-2}\right)^{-\frac{d+1}{2}}\:, \:\:x,y \in \X\:.
\end{equation}

\section{Ces\`aro summability of H\"older functions}
\label{sec3}
In this section, we prove Theorem \ref{thm_cesaro}. But first, we obtain a genuine analogue of the classical Bernstein inequality for spherical polynomials (see Definition \ref{defn_spherical_poly}) in our present setting:
\begin{lemma} \label{bernstein}
Let $\Phi$ be a spherical polynomial of degree at most $m$ on $\X$. Then we have the following gradient estimate:
\begin{equation*}
\left\|\nabla \Phi\right\|_{L^\infty(\X)} \lesssim m \left\|\Phi\right\|_{L^\infty(\X)}\:,
\end{equation*} 
where the implicit constant depends only on the intrinsic geometry of $\X$.
\end{lemma}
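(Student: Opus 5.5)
The plan is to deduce the gradient estimate pointwise from the directional Bernstein estimate of Lemma \ref{primitive_bernstein}. Fix $x \in \X$. Since $\Phi$ is a finite linear combination of the eigenfunctions $Y_{n,j}$, and these are smooth (indeed real analytic) on the compact manifold $\X$, the gradient $\nabla\Phi(x) \in T_x\X$ is well defined. If $\nabla \Phi(x)=0$ there is nothing to prove. Otherwise we choose the unit tangent vector
\begin{equation*}
v:=\frac{\nabla\Phi(x)}{|\nabla \Phi(x)|} \in T_x\X\:,
\end{equation*}
and let $\gamma_{x,v}$ be the geodesic with $\gamma_{x,v}(0)=x$ and $\gamma_{x,v}'(0)=v$. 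This geodesic is defined for all $s\in\R$, by completeness of $\X$ (Hopf--Rinow).

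By the chain rule and the definition of the gradient through the Riemannian metric $g$,
\begin{equation*}
\frac{d}{ds}\Phi\left(\gamma_{x,v}(s)\right)\Big|_{s=0} = d\Phi_x(v) = g_x\left(\nabla\Phi(x),v\right) = |\nabla\Phi(x)|\:.
\end{equation*}
If $\Phi$ is complex-valued, we first apply this to $Re\,\Phi$ and $Im\,\Phi$ separately. Both are again spherical polynomials of degree at most $m$, because each $\mathscr{H}_n(\X)$ is closed under conjugation, the Laplacian being a real operator. Their sup norms are bounded by $\|\Phi\|_{L^\infty(\X)}$. Lemma \ref{primitive_bernstein} with $t=0$ then gives
\begin{equation*}
|\nabla \Phi(x)| \le m\|\Phi\|_{L^\infty(\X)}\:,
\end{equation*}
with an extra factor of at most $2$ in the complex case. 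Taking the supremum over $x\in\X$ yields the claim. The norm $|\cdot|$ on $T_x\X$ is the one induced by the fixed normalized Riemannian metric, so the implicit constant depends only on that normalization, i.e. on the intrinsic geometry of $\X$.

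The only delicate point is the identification of the directional derivative along a unit-speed geodesic with $g_x(\nabla\Phi(x),v)$. This holds because $\gamma_{x,v}$ has unit speed in the same normalized metric used to define $\nabla$. Consequently Lemma \ref{primitive_bernstein}, being uniform over all unit directions, controls the full gradient via the maximizing direction. I expect no genuine obstacle beyond this bookkeeping and the real/imaginary reduction for complex-valued $\Phi$.
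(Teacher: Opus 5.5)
Your proof is correct. It rests on the same key input as the paper, Lemma \ref{primitive_bernstein}, but packages it differently.

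\textbf{The paper's route.} It works in normal coordinates at $x$ and writes $\nabla\Phi(x)=\sum_i\bigl(\sum_j g^{ij}(x)\,\partial_j\Phi(x)\bigr)\partial_i$. It bounds the coefficients $g^{ij}$ uniformly by compactness. It then applies Lemma \ref{primitive_bernstein} to each coordinate derivative $\partial_j\Phi(x)$, which is a derivative along the unit-speed geodesic $s\mapsto\exp_x(se_j)$. Summing over coordinates gives a constant depending on $d$ and the metric bounds. The compactness step is in fact superfluous, since $g^{ij}(x)=\delta^{ij}$ at the base point of normal coordinates.

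\textbf{Your route.} You apply Lemma \ref{primitive_bernstein} once, in the maximizing direction $v=\nabla\Phi(x)/|\nabla\Phi(x)|$, using $d\Phi_x(v)=g_x(\nabla\Phi(x),v)=|\nabla\Phi(x)|$. This avoids coordinates entirely and gives the sharp constant $1$ for real-valued $\Phi$.

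\textbf{The trade-off.} Your trick requires $\nabla\Phi(x)$ to be a real tangent vector. You handle this correctly by splitting into $\mathrm{Re}\,\Phi$ and $\mathrm{Im}\,\Phi$, which are again spherical polynomials of degree at most $m$ because $\mathscr{H}_n(\X)$ is conjugation invariant. This costs a factor of at most $2$. The paper's coordinate argument instead handles complex-valued $\Phi$ directly, provided Lemma \ref{primitive_bernstein} holds for complex $\Phi$, at the price of a dimensional constant.
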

\begin{proof}
Let $x \in \X$. Then in normal coordinates based at $x$, we have the following local form of the gradient:
\begin{equation*} 
\nabla \Phi(x)=\sum_{i=1}^d \left(\sum_{j=1}^d g^{ij}(x)\:\frac{\partial \Phi}{\partial x_j}(x)\right)\frac{\partial}{\partial x_i}\:,
\end{equation*}
where $g^{ij}$ are the components of the inverse metric tensor $g^{-1}$, $\frac{\partial \Phi}{\partial x_j}$ are the directional derivatives in the normal coordinates and $\frac{\partial}{\partial x_i}$ are the coordinate vector fields forming the corresponding basis for $T_x\X$. By compactness of $\X$, there exists $C>0$ such that for all $x \in \X, 1 \le i,j \le d, \:\left|g^{ij}(x)\right| \le C.$ Hence, it suffices to obtain inequalities of the form,
\begin{equation*}
\left|\frac{\partial \Phi}{\partial x_j}(x)\right| \le m \|\Phi\|_{L^\infty(\X)}\:, 
\end{equation*}
which is provided by Lemma \ref{primitive_bernstein}. This completes the proof.
\end{proof}

\begin{proof}[Proof of Theorem \ref{thm_cesaro}]
We recall that the real projective space $P^d(\R)$ can be obtained from $\Sb^d$ by identifying the antipodal points:
\begin{eqnarray*}
\pi : &&\Sb^d\:\: \to \:\: P^d(\R) \\
&& \pm x \:\:\mapsto \:\: [x]\:,
\end{eqnarray*}
with the projection map $\pi$ being a local isometry. Then the distances on $P^d(\R)$ and $\Sb^d$ are related as follows:
\begin{equation*}
d_{P^d(\R)}\left(\pi(x),\pi(y)\right)= \min \left\{d_{\Sb^d}(x,y)\:,\:d_{\Sb^d}(x,-y)\right\}\:,\:\:x,y \in \Sb^d\:.
\end{equation*}
Thus there is a one-to-one correspondence between the H\"older spaces $C^\gamma\left(P^d(\R)\right)$  and $C^\gamma\left(\Sb^d\right)_e$, the collection of even functions in $C^\gamma\left(\Sb^d\right),\:\gamma \in (0,1)$. So, the validity of Theorem \ref{thm_cesaro} on $P^d(\R)$ follows from that of $\Sb^d$ \cite[Theorem 2.13]{Huynh}. Hence, we will prove Theorem \ref{thm_cesaro} for the rest of the cases, that is, $\X=P^d(\C),\:P^d(\Hb)$ and $P^{16}(Cay)$.

For $\delta \ge \delta_*$ and $\gamma \in (0,1)$, we first assume that $f \in C^\gamma(\X)$. Let $x \in \X$. Then by using the property (\ref{integral=1}) and non-negativity (\ref{cesaro_kernel_estimate}) , we get
\begin{eqnarray} \label{cesaro_pf_eq1}
\left|C^\delta_Nf(x)-f(x)\right| = \left|f*K^\delta_N(x) - f(x)\right| &=& \left|\int_{\X} \left(f(y)-f(x)\right)\:
K^\delta_N(d(x,y))\:d\mu(y)\right|\nonumber\\
&\le & \int_{\X}  |f(y)-f(x)|\:K^\delta_N(d(x,y))\:d\mu(y)\:.
\end{eqnarray}
At this juncture, a brief glance at the pointwise estimate (\ref{cesaro_kernel_estimate}):
\begin{equation*} 
 K^\delta_N(d(x,y)) \lesssim N^{-1}\left(1-\cos(d(x,y)) +N^{-2}\right)^{-\frac{d+1}{2}}\:, 
\end{equation*}
motivates us to decompose the last integral into two parts: in a small geodesic ball centred at $x$ and in its complement.

We first do the estimate on the geodesic ball $B\left(x,N^{-1}\right)$. In this case,
\begin{equation*}
 K^\delta_N(d(x,y)) \lesssim N^d\:.
\end{equation*}
Thus,
\begin{eqnarray*}
\int_{B\left(x,N^{-1}\right)}  |f(y)-f(x)|\:K^\delta_N(d(x,y))\:d\mu(y) &\lesssim & N^d \int_{B\left(x,N^{-1}\right)} d(x,y)^\gamma\:d\mu(y) \\
& \le & N^{d-\gamma} \mu\left(B\left(x,N^{-1}\right)\right)\:.
\end{eqnarray*}
Then by the left invariance of the metric and the formula for density (\ref{density}) yields $\mu\left(B\left(x,N^{-1}\right)\right) \lesssim N^{-d}$ and hence,
\begin{equation} \label{cesaro_pf_eq2}
\int_{B\left(x,N^{-1}\right)}  |f(y)-f(x)|\:K^\delta_N(d(x,y))\:d\mu(y) \lesssim N^{-\gamma}\:.
\end{equation}

Next, we shift our attention to $\X \setminus B\left(x,N^{-1}\right)$. For $0 \le k <1+\log_2\left(N^2\right)$, we consider dyadic geodesic annuli of the form,
\begin{equation*}
A_k(x):=\left\{y \in \X : 2^{-\frac{k}{2}} \lesssim d(x,y)\lesssim 2^{-\frac{k-1}{2}}\right\}\:,
\end{equation*}
and note that
\begin{equation*}
\X \setminus B\left(x,N^{-1}\right) \:\:\subset \bigcup_{0\le k < 1+\log_2\left(N^2\right)} A_k(x)\:.
\end{equation*}

For $y \in A_k(x)$, recalling that diameter of $\X=\pi$, we see that
\begin{equation*}
1-\cos (d(x,y)) \gtrsim d(x,y)^2 \gtrsim 2^{-k} \:,
\end{equation*}
and hence,
\begin{equation*}
 K^\delta_N(d(x,y)) \lesssim N^{-1}2^{\frac{k(d+1)}{2}}\:.
\end{equation*}
Thus,
\begin{eqnarray*}
&&\int_{\X \setminus B\left(x,N^{-1}\right)}  |f(y)-f(x)|\:K^\delta_N(d(x,y))\:d\mu(y) \\
&\le & \sum_{k=0}^{2\log_2(N)+1} \int_{A_k(x)}  |f(y)-f(x)|\:K^\delta_N(d(x,y))\:d\mu(y) \\
& \lesssim & N^{-1} \sum_{k=0}^{2\log_2(N)+1} 2^{\frac{k(d+1)}{2}} \int_{A_k(x)} d(x,y)^\gamma\:d\mu(y) \\
& \lesssim & N^{-1} \sum_{k=0}^{2\log_2(N)+1} 2^{\frac{k(d+1)}{2}} 2^{-\frac{(k-1)\gamma}{2}}\mu\left(A_k(x)\right)\:. 
\end{eqnarray*}
Again by the left invariance of the metric and the formula for density (\ref{density}) yields 
\begin{equation*}
\mu\left(A_k(x)\right) \lesssim \mu\left(B\left(x,2^{-\frac{k-1}{2}}\right)\right) \lesssim 2^{-\frac{(k-1)d}{2}}\:,
\end{equation*}
which upon plugging in the last sum yields,
\begin{eqnarray} \label{cesaro_pf_eq3}
\int_{\X \setminus B\left(x,N^{-1}\right)}  |f(y)-f(x)|\:K^\delta_N(d(x,y))\:d\mu(y) & \lesssim & N^{-1} \sum_{2^k \lesssim N^2} 2^{\frac{k}{2}(1-\gamma)} \nonumber\\
& \lesssim & N^{-1} N^{1-\gamma} \nonumber\\
&=& N^{-\gamma}\:.
\end{eqnarray}
Plugging (\ref{cesaro_pf_eq2}) and (\ref{cesaro_pf_eq3}) in (\ref{cesaro_pf_eq1}) yields,
\begin{equation*}
\left|C^\delta_Nf(x)-f(x)\right| \lesssim N^{-\gamma}\:.
\end{equation*}
Since $x$ was arbitrary, we obtain
\begin{equation*}
\left\|C^\delta_Nf-f\right\|_{L^{\infty}(X)} \lesssim N^{-\gamma}\:,
\end{equation*}
thus completing the proof of one implication.

\medskip

Conversely, let $\|f-C^\delta_Nf\|_{L^\infty(\X)} \lesssim N^{-\gamma}\:.$ We note that $C^\delta_N$ is a spherical polynomial of degree at most $N$ (see Definition \ref{defn_spherical_poly}). We will prove the more general statement: if there exists a sequence of spherical polynomials $\mathcal{P}_{2^k}$ of degree $\le 2^k, k \in \N \cup \{0\}$, with $\|f-\mathcal{P}_{2^k}\|_{L^\infty(\X)} \lesssim 2^{-k\gamma}$ then $f \in C^\gamma(\X)$.

By assumption, it follows that
\begin{equation} \label{cesaro_pf_eq4}
f=\mathcal{P}_1 \:+\:\sum_{k=1}^\infty \left(\mathcal{P}_{2^k}-\mathcal{P}_{2^{k-1}}\right)= \mathcal{P}_1 \:+\:\sum_{k=1}^\infty \mathcal{Q}_k\:,
\end{equation} 
where $\mathcal{Q}_k:=\mathcal{P}_{2^k}-\mathcal{P}_{2^{k-1}}\:.$ Then by the triangle inequality,
\begin{equation} \label{cesaro_pf_eq5}
\left\|\mathcal{Q}_k\right\|_{L^\infty(\X)} \le \|f-\mathcal{P}_{2^k}\|_{L^\infty(\X)} + \|f-\mathcal{P}_{2^{k-1}}\|_{L^\infty(\X)} \lesssim 2^{-k\gamma}\:.
\end{equation}
For $x \ne y$, there exists a unique $k_0 \in \N$ such that
\begin{equation*}
2^{-k_0} \pi < d(x,y) \le 2^{-(k_0-1)} \pi\:.
\end{equation*}
Thus for $1 \le k \le k_0-1$, $d(x,y)2^k \pi^{-1} \le 1$ and for $k \ge k_0$, $d(x,y)2^k \pi^{-1} > 1$. Keeping this in mind, for $1 \le k \le k_0-1$, we aim to obtain a sharper estimate for $|\mathcal{Q}_k(x) - \mathcal{Q}_k(y)|$ compared to (\ref{cesaro_pf_eq5}). By the Hopf-Rinow theorem there exists a length-minimizing geodesic segment, say $\zeta$ connecting $x$ to $y$. Then by the mean-value inequality along $\zeta$, we have
\begin{equation*}
|\mathcal{Q}_k(x) - \mathcal{Q}_k(y)| \le \left\|\nabla \mathcal{Q}_k \right\|_{L^\infty(\zeta)} d(x,y) \le \left\|\nabla \mathcal{Q}_k \right\|_{L^\infty(\X)} d(x,y) \:.
\end{equation*}
Now as by construction, $\mathcal{Q}_k$ is a spherical polynomial of degree at most $2^k$, we have by combining Lemma \ref{bernstein} and (\ref{cesaro_pf_eq5}),
\begin{equation} \label{cesaro_pf_eq6}
|\mathcal{Q}_k(x) - \mathcal{Q}_k(y)| \lesssim  2^k \|\mathcal{Q}_k\|_{L^\infty(\X)} \:d(x,y) \lesssim 2^{k(1-\gamma)}\:d(x,y)\:.
\end{equation}
Finally, by (\ref{cesaro_pf_eq4}), we write
\begin{equation*}
|f(x)-f(y)| \le \left|\mathcal{P}_1(x)-\mathcal{P}_1(y)\right|\;+\:\sum_{k=1}^\infty \left|\mathcal{Q}_k(x)-\mathcal{Q}_k(y)\right|\:.
\end{equation*}
As $\mathcal{P}_1$ is a spherical polynomial (of degree at most $1$), it is $C^\infty$ and hence again by the Hopf-Rinow theorem and the mean-value inequality, $\mathcal{P}_1$ is Lipschitz on $\X$. Combining this with the estimates (\ref{cesaro_pf_eq5}) and (\ref{cesaro_pf_eq6}), we obtain
\begin{eqnarray*}
|f(x)-f(y)| & \lesssim & d(x,y)^\gamma \:+\: \sum_{k=1}^\infty \min\left(2^{-k\gamma},2^{k(1-\gamma)}\:\pi^{-1}\:d(x,y)\right)\\
&\lesssim& d(x,y)^\gamma \:+\: \sum_{k=1}^{k_0-1} 2^{k(1-\gamma)}\:d(x,y)\:+\: \sum_{k=k_0}^\infty 2^{-k\gamma}\\
&\lesssim &   d(x,y)^\gamma\:.
\end{eqnarray*}
This completes the proof of Theorem \ref{thm_cesaro}.
\end{proof}

The proof of the converse part yields the following useful sufficiency condition:
\begin{corollary} \label{sufficiency_corollary}
If there exists a sequence of spherical polynomials $\mathcal{P}_{2^k}$ of degree $\le 2^k, k \in \N \cup \{0\}$, with $\|f-\mathcal{P}_{2^k}\|_{L^\infty(\X)} \lesssim 2^{-k\gamma}\:,\gamma \in (0,1),$ then $f \in C^\gamma(\X)$.
\end{corollary}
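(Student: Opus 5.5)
The corollary is essentially the converse half of the proof of Theorem \ref{thm_cesaro}, with the Ces\`aro means replaced by an arbitrary approximating sequence $\mathcal{P}_{2^k}$. I would simply redo that argument in this generality.

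First, telescope. Since $\|f-\mathcal{P}_{2^k}\|_{L^\infty(\X)}\to 0$, we have uniformly
\begin{equation*}
f=\mathcal{P}_1+\sum_{k\ge 1}\mathcal{Q}_k\:,\qquad \mathcal{Q}_k:=\mathcal{P}_{2^k}-\mathcal{P}_{2^{k-1}}\:.
\end{equation*}
Each $\mathcal{Q}_k$ is a spherical polynomial of degree at most $2^k$. By the triangle inequality, $\|\mathcal{Q}_k\|_{L^\infty(\X)}\lesssim 2^{-k\gamma}$. (If $f$ is only given almost everywhere, the uniform limit provides a continuous representative; I would note this in passing.)

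Second, obtain two bounds on $|\mathcal{Q}_k(x)-\mathcal{Q}_k(y)|$.
\begin{itemize}
\item The trivial bound is $2\|\mathcal{Q}_k\|_{L^\infty(\X)}\lesssim 2^{-k\gamma}$.
\item For the derivative bound, join $x$ and $y$ by a minimizing geodesic, which exists by Hopf--Rinow. The mean-value inequality along it, combined with Lemma \ref{bernstein}, gives
\begin{equation*}
|\mathcal{Q}_k(x)-\mathcal{Q}_k(y)|\lesssim 2^k\|\mathcal{Q}_k\|_{L^\infty(\X)}\,d(x,y)\lesssim 2^{k(1-\gamma)}d(x,y)\:.
\end{equation*}
\end{itemize}
The term $\mathcal{P}_1$ is smooth on a compact manifold. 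By the same mean-value argument it is Lipschitz, so it contributes $\lesssim d(x,y)\lesssim d(x,y)^\gamma$, using that the diameter of $\X$ is bounded.

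Third, sum over $k$. Choose $k_0$ with $2^{-k_0}\pi<d(x,y)\le 2^{-k_0+1}\pi$.
\begin{itemize}
\item For $k<k_0$, use the derivative bound. Because $1-\gamma>0$, the sum is a geometric series dominated by its last term: $2^{k_0(1-\gamma)}d(x,y)\asymp d(x,y)^{\gamma}$.
\item For $k\ge k_0$, use the trivial bound. Because $\gamma>0$, the sum is $\lesssim 2^{-k_0\gamma}\asymp d(x,y)^\gamma$.
\end{itemize}
Together these give $|f(x)-f(y)|\lesssim d(x,y)^\gamma$.

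The only substantive input is the Bernstein inequality, Lemma \ref{bernstein}, which is already available. The step needing care is the dyadic bookkeeping: both geometric sums must converge to the correct power, and this uses exactly $\gamma\in(0,1)$, strictly on both sides. I expect no real obstacle.
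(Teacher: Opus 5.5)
Your proposal is correct and follows the paper's own argument essentially verbatim: the same telescoping into $\mathcal{Q}_k=\mathcal{P}_{2^k}-\mathcal{P}_{2^{k-1}}$, the Hopf--Rinow/mean-value/Bernstein bound $2^{k(1-\gamma)}d(x,y)$ for $k<k_0$, the trivial bound $2^{-k\gamma}$ for $k\ge k_0$, and the Lipschitz estimate for $\mathcal{P}_1$. Your remark that the uniform limit supplies a continuous representative of $f$ is a small point of care that the paper leaves implicit.
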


\section{Some applications}
\label{sec4}
In this section, we prove Corollaries \ref{corollary1}, \ref{corollary2} and \ref{corollary3}. In order to so, let us recall the following results from approximation theory:

\begin{lemma} \cite[Theorem 5.1]{Li} \label{Li_lemma1}
Let $f \in C(\X),\:\gamma \in (0,1)$ and $\delta>\frac{d-1}{2}$. Then 
\begin{equation*}
\left\|C^\delta_Nf-f\right\|_{L^{\infty}(X)} \lesssim N^{-\gamma}\:,\text{ as } N \to \infty\:,
\end{equation*}
if and only if 
\begin{equation*}
\sup_{0<r \le t} \|(\tau_r -I)^\frac{1}{2}f\|_{L^\infty(\X)} \lesssim t^\gamma\:,\text{ as } t \to 0+\:.
\end{equation*}
\end{lemma}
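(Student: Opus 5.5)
The plan is to prove both implications through the spectral multipliers of the operators involved, as is standard for equivalences between approximation rates and moduli of smoothness (this is the route to the result cited from \cite{Li}). First I would make $(\tau_r-I)^{\frac12}$ precise. Since $\tau_r f=f*\sigma_r$ with $\sigma_r$ radial, the functional calculus gives
\begin{equation*}
\tau_r\,proj_n f=\varphi_n(r)\,proj_n f\:,\qquad \varphi_n(r)=\frac{P^{(\alpha,\beta)}_n(\cos(r/s))}{P^{(\alpha,\beta)}_n(1)}\:,
\end{equation*}
and $(I-\tau_r)^{\frac12}$ is the multiplier $(1-\varphi_n(r))^{\frac12}$. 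Using (\ref{jacobi_at_1}), Lemma \ref{frenzen} and the Taylor expansion of $\varphi_n$ at $r=0$ (where $\varphi_n''(0)\asymp -\lambda_n^2$), I would establish
\begin{equation*}
1-\varphi_n(r)\asymp \min\{1,(nr)^2\}\:,\qquad |\varphi_n(r)|\le c<1 \text{ for } nr\ge 1\:.
\end{equation*}
This says that $(I-\tau_r)^{\frac12}$ behaves like a first-order modulus at scale $r$.

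\emph{Direct part (Ces\`aro rate $\Rightarrow$ modulus).} I would decompose
\begin{equation*}
f=C^\delta_1f+\sum_{k\ge1}\mathcal{Q}_k\:,\qquad \mathcal{Q}_k=C^\delta_{2^k}f-C^\delta_{2^{k-1}}f\:,\qquad \|\mathcal{Q}_k\|_\infty\lesssim 2^{-k\gamma}\:.
\end{equation*}
Two inequalities for a spherical polynomial $\Phi$ of degree $\le m$ are then needed:
\begin{equation*}
\|(I-\tau_r)^{\frac12}\Phi\|_\infty\lesssim \min\{1,mr\}\,\|\Phi\|_\infty\:.
\end{equation*}
The bound $\lesssim\|\Phi\|_\infty$ (uniform $L^\infty$-boundedness of $(I-\tau_r)^{\frac12}$) and the Bernstein-type bound $\lesssim mr\|\Phi\|_\infty$ should both follow from writing the relevant multipliers as smooth functions of $n/m$ or $nr$. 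I would then check that they satisfy the finite-difference (bounded variation of order $>\delta$) conditions under which Bonami--Clerc \cite{Bonami} give $L^\infty$-boundedness once $\delta>\frac{d-1}{2}$. Summing over $k$ and splitting at $2^k\approx t^{-1}$ gives $\sum_{2^k\le 1/t}2^{k(1-\gamma)}t+\sum_{2^k>1/t}2^{-k\gamma}\lesssim t^\gamma$, uniformly for $r\le t$.

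\emph{Converse part (modulus $\Rightarrow$ Ces\`aro rate).} This is a Jackson inequality, $\|f-C^\delta_Nf\|_\infty\lesssim\sup_{r\le 1/N}\|(I-\tau_r)^{\frac12}f\|_\infty$. The multiplier of $I-C^\delta_N$ is $1-c_{N,n}$, where $c_{N,n}=A^\delta_{N-n}/A^\delta_N$ for $n\le N$ and $c_{N,n}=0$ otherwise; note $1-c_{N,n}\asymp n/N$ for $n\le N$. I would factor $1-c_{N,n}=m_N(n)\,(1-\varphi_n(1/N))^{\frac12}$. The lower bound $1-\varphi_n(1/N)\gtrsim\min\{1,(n/N)^2\}$ shows that $m_N$ is bounded. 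If the zero-free/oscillation issue for large $n$ causes trouble, I would replace the single $r=1/N$ by an average of $(I-\tau_r)^{\frac12}$ over $r\in[\frac1{2N},\frac1N]$, whose symbol is smoother and uniformly comparable to $\min\{1,n/N\}$. Then $\|f-C^\delta_Nf\|_\infty\le \|m_N\|_{M_\infty}\sup_{r\le1/N}\|(I-\tau_r)^{\frac12}f\|_\infty\lesssim N^{-\gamma}$.

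The main obstacle is this last step: proving that $m_N$ (or its $r$-averaged version) is a bounded multiplier on $L^\infty(\X)$ uniformly in $N$. This requires controlling enough finite differences in $n$ of a quotient whose denominator comes from Jacobi polynomials. I would obtain these from the uniform expansion of Lemma \ref{frenzen}, treating $n\lesssim N$ by Taylor expansion and $n\gtrsim N$ by the Bessel asymptotics, and then invoke the Bonami--Clerc Ces\`aro multiplier criterion for $\delta>\frac{d-1}{2}$.
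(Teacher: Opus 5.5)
The paper gives no proof of this statement; it is quoted from \cite[Theorem 5.1]{Li}, so your outline has to stand on its own.

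\textbf{Direct half.} The Ces\`aro rate $\Rightarrow$ modulus direction is sound in structure. The dyadic decomposition together with $\|(I-\tau_r)^{1/2}\Phi\|_\infty\lesssim\min\{1,mr\}\|\Phi\|_\infty$ for $\deg\Phi\le m$, split at $2^k\approx t^{-1}$, gives $t^\gamma$. For $mr\le 1$ the Bernstein bound can indeed be read off from the Bonami--Clerc finite-difference criterion, because only the non-oscillatory range $nr\lesssim 1$ enters. Two corrections are needed.
\begin{enumerate}
\item The bound by $\|\Phi\|_\infty$ cannot be obtained from that criterion. For $mr\gg1$ the symbol $(1-\varphi_n(r))^{1/2}\eta(n/m)$ oscillates in $n$ with period $\asymp r^{-1}$, and its Ces\`aro sum is of order $(mr)^{\delta+1-\frac{d-1}{2}}$. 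The bound is instead trivial: $(I-\tau_r)^{1/2}=\sum_j(-1)^j\binom{1/2}{j}\tau_r^j$, $\|\tau_r\|_{L^\infty\to L^\infty}=1$, and $\sum_j|\binom{1/2}{j}|=2$.
\item The comparison $1-\varphi_n(r)\asymp\min\{1,(nr)^2\}$ holds only for small $r$ (on $\Sb^d$, $\varphi_n(\pi)=(-1)^n$). That is all you need, but it should be stated that way.
\end{enumerate}

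\textbf{Converse half: the genuine gap.} You flag this direction as the main obstacle but do not close it, and the routes you indicate fail as stated. For $n>N$ your multiplier is $m_N(n)=(1-\varphi_n(1/N))^{-1/2}$, with $\varphi_n(1/N)\approx c\,(n/N)^{-\frac{d-1}{2}}\cos(n/N+\phi)$. Each forward difference $\triangle$ in $n$ therefore gains only $N^{-1}$, and
\[
\sum_{n>N}n^{\delta}\bigl|\triangle^{\delta+1}m_N(n)\bigr|\approx N^{-1}\sum_{n>N}\Bigl(\frac{n}{N}\Bigr)^{\delta-\frac{d-1}{2}}=\infty
\]
for every $\delta>\frac{d-1}{2}$, so the Bonami--Clerc criterion gives nothing. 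A plain average over $r\in[\frac{1}{2N},\frac1N]$ gains only a single factor $N/n$, and the sum still diverges for $\delta\ge\frac{d-1}{2}$. A $C^\infty_c$ weight might rescue it. But then you must expand $(1-\varphi_n(r))^{1/2}$ in powers of the oscillatory $\varphi_n(r)$, integrate each harmonic by parts in $r$, and bound differences of order $>\frac{d+1}{2}$ uniformly in $N$. That is the whole Jackson inequality, and it is absent from the proposal.

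\textbf{A cleaner route for the converse.} Avoid inverting $(I-\tau_r)^{1/2}$ at high frequencies altogether.
\begin{enumerate}
\item Since $\|(I-\tau_r)f\|_\infty\le 2\|(I-\tau_r)^{1/2}f\|_\infty$ and $\gamma<1$, the first power suffices.
\item Set $g_N=\int\tau_rf\,w_N(r)\,dr$ with $w_N=Nw(N\cdot)$, $w\in C^\infty_c(\frac12,1)$, $\int w=1$. Then $\|f-g_N\|_\infty\le\sup_{r\le1/N}\|(I-\tau_r)f\|_\infty$.
\item Use the Darboux equation $\Delta_x(\tau_rf)=L_r(\tau_rf)$, with $L_r=\partial_r^2+\frac{\mathcal{A}'}{\mathcal{A}}\partial_r$ (up to the sign convention for $\Delta$). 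Integrating by parts in $r$ and using $\int L_r^*w_N\,dr=0$ gives $\|\Delta g_N\|_\infty\lesssim N^2\sup_{r\le1/N}\|(I-\tau_r)f\|_\infty$.
\item Only de la Vall\'ee Poussin-type multipliers, which are smooth at scale $N$ and do satisfy the criterion, are then needed to get $E_N(f)\lesssim N^{-\gamma}$. Here $E_N(f)$ is the best approximation by spherical polynomials of degree $\le N$.
\item Since $\gamma$ lies below the saturation order $1$ of $C^\delta_N$, a Stechkin-type argument gives the Ces\`aro rate. For $\delta\ge\delta_*$ you may instead combine Corollary \ref{sufficiency_corollary} with the implication $f\in C^\gamma(\X)\Rightarrow\|C^\delta_Nf-f\|_{L^\infty(\X)}\lesssim N^{-\gamma}$ of Theorem \ref{thm_cesaro}.
\end{enumerate}
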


\begin{lemma} \cite[Theorem 5.2]{Li} \label{Li_lemma2}
Let $f \in C(\X),\:\gamma \in (0,1)$ and $\delta>\frac{d-1}{2}$. Then 
\begin{equation*}
\left\|C^\delta_Nf-f\right\|_{L^{\infty}(X)} \lesssim N^{-\gamma}\:,\text{ as } N \to \infty\:,
\end{equation*}
if and only if 
\begin{equation*}
\|S^\delta_Rf-f\|_{L^\infty(\X)} \lesssim R^{-\gamma}\:,\text{ as } R \to \infty\:.
\end{equation*}
\end{lemma}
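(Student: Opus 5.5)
The plan is to route both conditions through the best approximation $E_N(f):=\inf\{\|f-\Phi\|_{L^\infty(\X)}:\Phi \text{ a spherical polynomial of degree}\le N\}$, and to show that each of the two rate conditions is equivalent to $E_N(f)\lesssim N^{-\gamma}$. Since $\lambda_n\asymp n$ by Table \ref{table:2}, the Riesz mean $S^\delta_R f$ is a spherical polynomial of degree $\lesssim R$. Likewise $C^\delta_Nf$ is a spherical polynomial of degree $\le N$. Hence either rate condition trivially gives $E_N(f)\lesssim N^{-\gamma}$. It therefore suffices to prove the reverse direction, $E_N(f)\lesssim N^{-\gamma}\Rightarrow$ rate $N^{-\gamma}$, for both means. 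The argument is identical for the two means, so I describe it for a generic operator $T_R$ with multiplier $m(n/R)$. For the Ces\`aro means I take $m(n/N)=A^\delta_{N-n}/A^\delta_N\approx(1-n/N)_+^\delta$, and for the Riesz means $m(\lambda_n/R)=(1-\lambda_n/R)_+^\delta$.

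\textbf{Step 1 (uniform boundedness).} For $\delta>\frac{d-1}{2}$ the operators $T_R$ are uniformly bounded on $C(\X)$. For Ces\`aro means this is Bonami--Clerc (\ref{cesaro_convergence}) via the uniform boundedness principle, and for Riesz means it is Sogge's result (Remark \ref{riesz_remark}).

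\textbf{Step 2 (dyadic decomposition).} Choose near-best approximants $\mathcal{P}_{2^k}$ of degree $\le 2^k$ and set $\mathcal{Q}_k=\mathcal{P}_{2^k}-\mathcal{P}_{2^{k-1}}$, exactly as in the proof of Theorem \ref{thm_cesaro}. This gives $\|\mathcal{Q}_k\|_{L^\infty(\X)}\lesssim 2^{-k\gamma}$ and $f=\mathcal{P}_1+\sum_k\mathcal{Q}_k$. Then I split
\[
T_Rf-f=\sum_{2^k\le R}(T_R-I)\mathcal{Q}_k+(T_R-I)\Big(f-\mathcal{P}_{2^{k_R}}\Big),
\]
where $2^{k_R}\approx R$, and the $\mathcal{P}_1$ term is absorbed into the first sum. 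The tail term is bounded by $(1+\sup_R\|T_R\|)E_{2^{k_R}}(f)\lesssim R^{-\gamma}$ by Step 1.

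\textbf{Step 3 (Bernstein-type estimate; the main obstacle).} For a spherical polynomial $\Phi$ of degree $\le M\le R$ I need
\[
\|(T_R-I)\Phi\|_{L^\infty(\X)}\lesssim \frac{M}{R}\,\|\Phi\|_{L^\infty(\X)}.
\]
Granting this, the low-frequency sum is $\lesssim \sum_{2^k\le R}2^{k(1-\gamma)}R^{-1}\lesssim R^{-\gamma}$, using $\gamma<1$, and the proof is complete. To prove the estimate, I would fix a smooth cutoff $\chi$ equal to $1$ on $[0,1]$ and supported in $[0,2]$. Then I write $(T_R-I)\Phi=\frac{M}{R}\,G_{M,R}\Phi$, where $G_{M,R}$ has multiplier
\[
g(n)=\frac{R}{M}\big(m(n/R)-1\big)\chi(n/M).
\]
Since $m(0)=1$ and $m$ is smooth near $0$ (Lipschitz with bounded derivatives on $[0,1/2]$, say), $g$ is a symbol in $n/M$ whose differences of all orders up to some $k>\frac{d-1}{2}$ are bounded uniformly in $M\le R/2$. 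The kernel of $G_{M,R}$ is $\sum_n g(n)Z_n$. Applying summation by parts $k$ times rewrites it as a combination of Ces\`aro kernels $K^{k}_{j}$ with coefficients $\lesssim$ (order-$k$ differences of $g$)$\cdot A^k_j$. The kernel bound (\ref{cesaro_kernel_estimate}), or the uniform $L^1$ bound on Ces\`aro kernels of order $>\frac{d-1}{2}$, then gives $\|G_{M,R}\|_{L^\infty\to L^\infty}\lesssim 1$. The range $R/2<M\le R$ is trivial from Step 1, since there $M/R\asymp 1$. The delicate points are verifying the uniform difference bounds for the Ces\`aro coefficients $A^\delta_{N-n}/A^\delta_N$, which are not exactly a smooth function of $n/N$, and handling the non-integer spectral parameter $\lambda_n$ in the Riesz case. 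I would handle both by expanding in powers of $n/N$, respectively $\lambda_n/R$, to enough orders, using the asymptotic expansion of $\Gamma$-ratios.
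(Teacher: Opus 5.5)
Your argument is correct in outline, and it takes a different route from the paper. The paper gives no proof of this lemma: it imports Theorem 5.2 of \cite{Li}, where the result belongs to the theory of Peetre $K$-moduli. You instead show that each rate condition is equivalent to the best-approximation condition $E_N(f)\lesssim N^{-\gamma}$. One direction is trivial, because $C^\delta_Nf$ and $S^\delta_Rf$ are spherical polynomials of degree $\le N$, resp.\ $\lesssim R$. The other direction combines three ingredients:
\begin{itemize}
\item uniform boundedness on $C(\X)$, from Bonami--Clerc and Sogge together with Banach--Steinhaus;
\item the same dyadic splitting $f=\mathcal P_1+\sum_k\mathcal Q_k$ used in the converse half of Theorem \ref{thm_cesaro};
\item a saturation-type Bernstein inequality $\|(T_R-I)\Phi\|_{L^\infty(\X)}\lesssim (M/R)\|\Phi\|_{L^\infty(\X)}$, proved by summation by parts against Ces\`aro kernels of integer order $k>\frac{d-1}{2}$.
\end{itemize}
For those kernels, use the uniform $L^1$ bound that Step 1 gives, since a radial convolution operator has $C(\X)$-norm equal to the $L^1$-norm of its kernel. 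Do not use (\ref{cesaro_kernel_estimate}) here, which is stated only for $\delta\ge\delta_*$ and not for $P^d(\R)$.

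What your route buys is a self-contained proof built from the paper's own machinery. It also has a by-product. Combined with Corollary \ref{sufficiency_corollary} and Theorem \ref{thm_cesaro} at $\delta=\delta_*$, it gives Theorem \ref{thm_cesaro} for every $\delta>\frac{d-1}{2}$, which answers the first question in Section \ref{sec8}. The $K$-functional framework of \cite{Li} gives more, for example the link with spherical means in Lemma \ref{Li_lemma1}, but none of that is needed for this equivalence.

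Three details need fixing when you write it out.

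First, with $\mathrm{supp}\,\chi\subset[0,2]$ and $M\le R/2$, the cutoff reaches $n/R=1$, where $(1-x)^\delta_+$ is not smooth. Localize to $M\le R/4$ and let Step 1 handle $M>R/4$.

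Second, summation by parts uses $k+1$ differences, and what it needs is $\sum_j|\Delta^{k+1}g(j)|(1+j)^k\lesssim 1$. Mere boundedness of the differences is not enough. For the Ces\`aro multiplier this is exact and easy. The identity $A^\delta_m-A^\delta_{m-1}=A^{\delta-1}_m$ gives $\Delta^j_n\left(A^\delta_{N-n}/A^\delta_N\right)=\pm A^{\delta-j}_{N-n}/A^\delta_N=\mathcal O(N^{-j})$ for $n\le N/2$. This yields $|\Delta^jg(n)|\lesssim M^{-j}$ directly, with no $\Gamma$-asymptotics.

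Third, in the Riesz case $\lambda_n=\sqrt{sn(sn+\alpha+\beta+1)}$ is not a smooth function of $n/M$ near $n=0$. So for bounded $n$ you only get $|\Delta^{k+1}g(n)|=\mathcal O(1/M)$, not $\mathcal O(M^{-k-1})$. Expanding in powers of $\lambda_n/R$ does not cure this, because the leading term is the multiplier $\lambda_n/M$ itself. The fix is the symbol estimate $|\Delta^j\lambda_n|\lesssim(1+n)^{1-j}$. It gives $|\Delta^{k+1}g(n)|\lesssim(1+n)^{-k}/M$ for $n\lesssim M$, which is exactly the summability you need.
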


\begin{proof}[Proof of Corollaries \ref{corollary1} and \ref{corollary2}] We note that $\delta_* > \frac{d-1}{2}$. The Corollaries \ref{corollary1} and \ref{corollary2} then follow at once from Theorem \ref{thm_cesaro} combined with Lemmata \ref{Li_lemma1} and \ref{Li_lemma2} respectively.
\end{proof}

\begin{proof}[Proof of Corollary \ref{corollary3}] 
We recall the sequence of partial sums defined in (\ref{partial_sum}) and consider the dyadic sub-collection $\{S_{2^k}\}_{k=0}^\infty$. Then it is a sequence of spherical polynomials of degree $\le 2^k$. Now if $f \in B^\gamma_{\infty, \infty}(\X)$, it follows from the definition that the series $\sum_{k=0}^\infty S_{2^k}$ is uniformly convergent to $f$ and moreover,
\begin{eqnarray*}
\left\|f-S_{2^k}f\right\|_{L^\infty(\X)} &=& \left\|f-\left(S_1f\:+\:\sum_{j=0}^{k-1}\left(S_{2^{j+1}}f-S_{2^j}f\right)\right)\right\|_{L^\infty(\X)}\\
&=& \left\|\sum_{j=k}^\infty \left(S_{2^{j+1}}f-S_{2^j}f\right)\right\|_{L^\infty(\X)}\\
&\le & \sum_{j=k}^\infty \left\|P_{2^j}f\right\|_{L^\infty(\X)} \\
& \lesssim & \sum_{j=k}^\infty 2^{-j\gamma}\\
& \lesssim & 2^{-k\gamma}\:.
\end{eqnarray*}
The result now follows from Corollary \ref{sufficiency_corollary}.
\end{proof}

\section{Upper Minkowski dimension of graphs of H\"older functions}
\label{sec5}
In this section, we prove the following result about the upper Minkowski dimension (defined in (\ref{minkowski_defn})) of  graphs of real-valued H\"older functions:
\begin{theorem} \label{thm_gmt}
Let $(\mathcal{X},d)$ be a compact metric space such that there are finitely many subsets $\Omega_j \subset \mathcal{X},\:1 \le j \le n$ with
\begin{itemize}
\item $\mathcal{X} = \displaystyle\bigcup_{j=1}^n \Omega_j$ and
\item there are bi-Lipschitz maps $\varphi_j : \Omega_j \to \varphi_j\left(\Omega_j\right) \subset \mathbb{R}^m\:,\: 1\le j \le n$. 
\end{itemize}
Then for a real-valued $f \in C^\gamma(\mathcal{X}),\:\gamma \in (0,1)$, we have $\overline{\dim}_M\left(Graph(f)\right) \le (m+1)-\gamma$\:.
\end{theorem}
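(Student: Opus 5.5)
We prove Theorem \ref{thm_gmt} in three steps: a Euclidean local estimate, transfer to each patch $\Omega_j$ via the bi-Lipschitz maps, and finite stability of $\overline{\dim}_M$. Throughout, $Graph(f)\subset \mathcal{X}\times\R$ carries the product metric $\rho((x,a),(y,b)) := \max\{d(x,y),|a-b|\}$; any equivalent choice, such as the $\ell^2$ one, changes $\mathcal{N}_\varepsilon$ only by constant factors in $\varepsilon$ and so leaves $\overline{\dim}_M$ unchanged.

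\emph{Step 1 (local Euclidean estimate).} Let $E\subset\R^m$ be bounded and $g:E\to\R$ with $|g(u)-g(v)|\le C|u-v|^\gamma$. We show that $Graph(g)$ is covered by $\lesssim \varepsilon^{-(m+1)+\gamma}$ sets of diameter $\lesssim \varepsilon$. Cover $E$ by the $\lesssim \varepsilon^{-m}$ cubes of side $\varepsilon$ in a grid that meet $E$. On each such cube $Q$, the oscillation of $g$ over $Q\cap E$ is at most $C(\sqrt m\,\varepsilon)^\gamma$. Hence $\{(u,g(u)):u\in Q\cap E\}$ lies in $Q\times I_Q$ with $I_Q$ an interval of length $\lesssim \varepsilon^\gamma$, and this set is covered by $\lesssim \varepsilon^{\gamma-1}$ boxes of side $\varepsilon$. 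Summing over cubes gives the count, and therefore $\overline{\dim}_M(Graph(g))\le m+1-\gamma$. This is the counting step the paper refers to as Lemma \ref{gmt_lemma3}.

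\emph{Step 2 (transfer to each patch).} Fix $j$ and set $g_j:=f\circ\varphi_j^{-1}$ on $E_j:=\varphi_j(\Omega_j)$, which is bounded because $\Omega_j$ is bounded and $\varphi_j$ is Lipschitz. Since $\varphi_j^{-1}$ is Lipschitz and $f\in C^\gamma$,
\[
|g_j(u)-g_j(v)|\lesssim d(\varphi_j^{-1}(u),\varphi_j^{-1}(v))^\gamma\lesssim |u-v|^\gamma ,
\]
so Step 1 applies to $g_j$. The map $\Phi_j(x,f(x)):=(\varphi_j(x),f(x))$ is a bi-Lipschitz bijection from $Graph(f|_{\Omega_j})$ onto $Graph(g_j)$ for the max product metrics. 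Bi-Lipschitz maps distort covering numbers only by constant rescalings of $\varepsilon$: if $\Phi_j^{-1}$ is $L$-Lipschitz, a cover of $Graph(g_j)$ by $\varepsilon$-sets pulls back to a cover by $L\varepsilon$-sets. This gives $\overline{\dim}_M(Graph(f|_{\Omega_j}))\le m+1-\gamma$.

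\emph{Step 3 (gluing).} We have $Graph(f)=\bigcup_{j=1}^n Graph(f|_{\Omega_j})$. For any finite union,
\[
\mathcal{N}_\varepsilon\Big(\bigcup_j G_j\Big)\le \sum_j\mathcal{N}_\varepsilon(G_j)\le n\max_j \mathcal{N}_\varepsilon(G_j),
\]
and the factor $n$ disappears after taking logarithms and dividing by $\log(1/\varepsilon)$. Taking $\limsup$ then gives finite stability: $\overline{\dim}_M$ of the union is the maximum of the $\overline{\dim}_M(G_j)$, which proves the theorem.

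The main technical care is in how $\mathcal{N}_\varepsilon$ is defined. It counts metric balls of the ambient space, and the patches and their images do not have to be open. To keep Step 2 and Step 3 clean, I would work throughout with the equivalent count by sets of diameter $\le\varepsilon$ contained in the set itself. This count is comparable to the ball count up to replacing $\varepsilon$ by $2\varepsilon$: a ball of radius $\varepsilon$ has diameter at most $2\varepsilon$, and a set of diameter at most $\varepsilon$ containing a point of $G$ lies in the ball of radius $\varepsilon$ about that point. With this count, the bounds pass unchanged to subsets, which is what Step 2 and the finite union in Step 3 require.
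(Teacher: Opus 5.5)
Your proposal is correct and takes essentially the same route as the paper: the Euclidean box-counting estimate (Lemma \ref{gmt_lemma3}), transfer to each patch through the bi-Lipschitz map $(x,f(x))\mapsto(\varphi_j(x),f(x))$ (Lemma \ref{gmt_lemma1}), and finite stability of $\overline{\dim}_M$ under unions (Lemma \ref{gmt_lemma2}). You count by sets of diameter at most $\varepsilon$ contained in the set, rather than by ambient balls; this only makes explicit a normalization the paper leaves implicit and does not change the argument.
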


Before proving Theorem \ref{thm_gmt}, we first see it in the simplest case of a bounded subset of $\R^m$:
\begin{lemma} \label{gmt_lemma3}
Let $E$ be a bounded subset of $\R^m$ and $f \in C^\gamma(E),\: \gamma \in (0,1)$, be real-valued. Then 
$$\overline{\dim}_M\left(Graph(f)\right) \le (m+1)-\gamma\:.$$
\end{lemma}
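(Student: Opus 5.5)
The plan is the standard box-counting argument. Since $E$ is bounded, it lies in a cube $Q_0=[-R,R]^m$. Fix $\varepsilon\in(0,1)$ and partition $Q_0$ into closed grid cubes of side $\varepsilon$; there are at most $C_R\,\varepsilon^{-m}$ of them. Let $Q$ be one such cube with $Q\cap E\neq\emptyset$. For $x,y\in Q\cap E$ we have $|x-y|\le \sqrt{m}\,\varepsilon$, so by the H\"older condition
\begin{equation*}
\operatorname{osc}_{Q\cap E} f:=\sup_{x,y\in Q\cap E}|f(x)-f(y)|\le C\,m^{\gamma/2}\varepsilon^\gamma .
\end{equation*}
Thus the part of $Graph(f)$ lying over $Q$ is contained in the box $Q\times[a_Q,a_Q+C'\varepsilon^\gamma]$, where $a_Q:=\inf_{Q\cap E}f$.

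Next, I stack $\varepsilon$-cubes of $\R^{m+1}$ vertically to cover this box. It takes at most $C'\varepsilon^{\gamma-1}+1\le 2C'\varepsilon^{\gamma-1}$ of them, using $\gamma<1$ and $\varepsilon<1$. Each such cube has diameter $\sqrt{m+1}\,\varepsilon$ and so lies in a metric ball of radius $\sqrt{m+1}\,\varepsilon$. Summing over the cubes $Q$ gives
\begin{equation*}
\mathcal{N}_{\sqrt{m+1}\,\varepsilon}\big(Graph(f)\big)\lesssim \varepsilon^{-m}\,\varepsilon^{\gamma-1}=\varepsilon^{-(m+1-\gamma)} .
\end{equation*}

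Finally, set $\varepsilon'=\sqrt{m+1}\,\varepsilon$. Taking logarithms, dividing by $\log(1/\varepsilon')$, and letting $\varepsilon'\to0$ in (\ref{minkowski_defn}) yields $\overline{\dim}_M(Graph(f))\le m+1-\gamma$, because the constant factors contribute nothing in the limit. Here $Graph(f)\subset\R^{m+1}$ carries the Euclidean metric. The only delicate point is this bookkeeping of constants and the rescaling of $\varepsilon$, which is routine; there is no real obstacle. If one prefers, one can also note that $f$ is bounded on $E$ (uniformly continuous on a bounded set), but this is not needed since the vertical extent is controlled locally on each cube.
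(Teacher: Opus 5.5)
Your proposal is correct and follows the same box-counting argument as the paper. You cover $E$ by $\lesssim\varepsilon^{-m}$ grid cubes, use the $\varepsilon^{\gamma}$ oscillation bound on each cube, and stack $\lesssim\varepsilon^{\gamma-1}$ cubes vertically; the only difference is that you convert cubes to balls by hand (via the diameter bound and rescaling $\varepsilon$) where the paper instead quotes the equivalence of box and ball counts from Falconer.
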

\begin{proof}
We first recall that to estimate the upper Minkowski dimension of a bounded subset $F \subset \R^{m+1}$, it is equivalent to consider $\mathcal{N}_\varepsilon(F)$ as the smallest number of boxes of sidelength $\le \varepsilon$ that can cover $F$ \cite[Section 3.1, Chapter 3]{Falconer}. For $\varepsilon>0$, we first cover $E$ by $m$-dimensional boxes with sidelength $\le \varepsilon$. The number of such boxes will be $\lesssim \varepsilon^{-m}$. Next as $f \in C^\gamma(E)$, we note that the range of $f$ on each such box is $\lesssim \varepsilon^\gamma$. To cover this range, the required number of intervals with length $\varepsilon$ will be $\lesssim \varepsilon^{\gamma -1}$. Now stacking these intervals on top of of the $m$-dimensional boxes, we get that the required number of $(m+1)$-dimensional boxes of sidelength $\le \varepsilon$ is,
\begin{equation*}
\mathcal{N}_\varepsilon(Graph(f)) \lesssim \varepsilon^{-m + \gamma -1}\:.
\end{equation*}
Plugging the above estimate in the definition (\ref{minkowski_defn}), the result follows.
\end{proof}

The idea of the proof of Theorem \ref{thm_gmt} is to utilize the Euclidean result (Lemma \ref{gmt_lemma3}) locally. In order to do so, we need the following tools:

\begin{lemma} \label{gmt_lemma1}
Let $\varphi$ be a bijective bi-Lipschitz map from $(\mathcal{X}_1,d_1) \to (\mathcal{X}_2,d_2)$ where both are totally bounded metric spaces. Then $\overline{\dim}_M\left(\mathcal{X}_1\right)=\overline{\dim}_M\left(\mathcal{X}_2\right)$\:.
\end{lemma}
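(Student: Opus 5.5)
The plan is to prove the two inequalities $\overline{\dim}_M(\mathcal{X}_2)\le\overline{\dim}_M(\mathcal{X}_1)$ and $\overline{\dim}_M(\mathcal{X}_1)\le\overline{\dim}_M(\mathcal{X}_2)$ by comparing covering numbers at comparable scales. Since $\varphi^{-1}$ is also bi-Lipschitz, the two are symmetric, so it suffices to prove one of them. Let $L\ge 1$ be a constant with
\begin{equation*}
L^{-1}d_1(x,y)\le d_2(\varphi(x),\varphi(y))\le L\,d_1(x,y)\:,\qquad x,y\in\mathcal{X}_1\:.
\end{equation*}

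\textbf{Step 1: transporting a cover.} Take a cover of $\mathcal{X}_1$ by $\mathcal{N}_\varepsilon(\mathcal{X}_1)$ balls $B_{d_1}(x_i,r_i)$ with $r_i\le\varepsilon$. The Lipschitz upper bound gives
\begin{equation*}
\varphi\left(B_{d_1}(x_i,r_i)\right)\subset B_{d_2}(\varphi(x_i),Lr_i)\:.
\end{equation*}
Since $\varphi$ is surjective, the balls $B_{d_2}(\varphi(x_i),Lr_i)$ cover $\mathcal{X}_2$, and their radii are $\le L\varepsilon$. Hence
\begin{equation*}
\mathcal{N}_{L\varepsilon}(\mathcal{X}_2)\le\mathcal{N}_\varepsilon(\mathcal{X}_1)\:.
\end{equation*}
Only the upper Lipschitz bound of $\varphi$ and its surjectivity are used here.

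\textbf{Step 2: taking limits.} Set $\eta=L\varepsilon$. Then
\begin{equation*}
\frac{\log\mathcal{N}_{\eta}(\mathcal{X}_2)}{\log(1/\eta)}\le\frac{\log\mathcal{N}_{\eta/L}(\mathcal{X}_1)}{\log(L/\eta)}\cdot\frac{\log(1/\eta)+\log L}{\log(1/\eta)}\:.
\end{equation*}
As $\eta\to0$, the last factor tends to $1$. Taking $\limsup$ gives $\overline{\dim}_M(\mathcal{X}_2)\le\overline{\dim}_M(\mathcal{X}_1)$. Total boundedness guarantees that all the covering numbers are finite, so the logarithms make sense. If $\mathcal{N}_\varepsilon$ stays bounded, both dimensions are $0$ and there is nothing to prove.

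\textbf{Step 3: symmetry.} Applying Steps 1 and 2 to $\varphi^{-1}:\mathcal{X}_2\to\mathcal{X}_1$, which is Lipschitz with the same constant $L$ by the lower bound, gives the reverse inequality and hence equality.

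The only point needing care is the rescaling in Step 2, namely that the constant factor $L$ in the scale does not affect the $\limsup$. This is handled by the logarithm computation above. One should also note that the ball centers $\varphi(x_i)$ lie in $\mathcal{X}_2$, so the image balls are genuine metric balls of $\mathcal{X}_2$. There is no real obstacle beyond this.
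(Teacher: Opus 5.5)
Your proof is correct and follows the paper's argument: push a cover of $\mathcal{X}_1$ forward under the Lipschitz bound to get $\mathcal{N}_{L\varepsilon}(\mathcal{X}_2)\le\mathcal{N}_{\varepsilon}(\mathcal{X}_1)$, absorb the constant scale factor $L$ into the logarithm as $\varepsilon\to0$, and get the reverse inequality by symmetry via $\varphi^{-1}$.
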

\begin{proof}
By symmetry, it suffices to prove that $\varphi$ is Lipschitz implies $$\overline{\dim}_M\left(\mathcal{X}_2\right) \le \overline{\dim}_M\left(\mathcal{X}_1\right)\:.$$
By definition, there exists $C>0$ such that
\begin{equation*}
d_2\left(\varphi(x),\varphi(y)\right) \le C d_1(x,y)\:,\:x,y \in \mathcal{X}_1\:.
\end{equation*}
Thus for $\varepsilon>0$, any cover of $\mathcal{X}_1$ by balls of radii $\le \varepsilon$ yields a cover of $\mathcal{X}_2$ by balls of radii $\le C\varepsilon$. Hence, we have
\begin{equation} \label{gmt_lemma1_pf_eq1}
\mathcal{N}_{C\varepsilon}\left(\mathcal{X}_2\right) \le \mathcal{N}_{\varepsilon}\left(\mathcal{X}_1\right)\:,
\end{equation}
where we recall that $\mathcal{N}_{\delta}\left(\mathcal{X}\right)$ denotes the smallest number of metric balls of radii $\le \delta$ which can cover $\mathcal{X}$. Therefore,
\begin{equation*}
\overline{\dim}_M\left(\mathcal{X}_2\right) = \limsup_{\varepsilon \to 0} \frac{\log\left(\mathcal{N}_{C\varepsilon}\left(\mathcal{X}_2\right) \right)}{\log\left(\frac{1}{\varepsilon}\right)\left[1+\frac{\log\left(\frac{1}{C}\right)}{\log\left(\frac{1}{\varepsilon}\right)}\right]}\:.
\end{equation*}
Now as 
\begin{equation*}
\frac{\log\left(\frac{1}{C}\right)}{\log\left(\frac{1}{\varepsilon}\right)} \to 0\:,\:\:\text{ as } \varepsilon \to 0\:,
\end{equation*}
combining with (\ref{gmt_lemma1_pf_eq1}), we get the desired inequality,
$$\overline{\dim}_M\left(\mathcal{X}_2\right) \le \limsup_{\varepsilon \to 0} \frac{\log\left(\mathcal{N}_{\varepsilon}\left(\mathcal{X}_1\right) \right)}{\log\left(\frac{1}{\varepsilon}\right)} = \overline{\dim}_M\left(\mathcal{X}_1\right)\:.$$
This completes the proof of Lemma \ref{gmt_lemma1}.
\end{proof}

\begin{lemma} \label{gmt_lemma2}
Let  $\left(\mathcal{X},d\right)$ be a totally bounded metric space and $\Omega_j \subset \mathcal{X},\:1\le j \le n$. Then 
\begin{equation*}
\overline{\dim}_M\left(\bigcup_{j=1}^n\Omega_j\right) = \max_{1 \le j \le n}\overline{\dim}_M\left(\Omega_j\right)\:.
\end{equation*} 
\end{lemma}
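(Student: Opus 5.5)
We prove the two inequalities separately; the main work is comparing covering numbers taken in different ambient sets. Throughout, $\mathcal{N}_\varepsilon(E)$ means covering by metric balls of $\mathcal{X}$ of radius $\le \varepsilon$, with arbitrary centres in $\mathcal{X}$. This matches the way the definition (\ref{minkowski_defn}) is applied to subsets $E$ of a totally bounded space.

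\textbf{Lower bound.} For each $j$ we have $\Omega_j \subset \bigcup_{i=1}^n\Omega_i =: \Omega$. Any cover of $\Omega$ by balls of radius $\le \varepsilon$ also covers $\Omega_j$, so
\begin{equation*}
\mathcal{N}_\varepsilon(\Omega_j)\le \mathcal{N}_\varepsilon(\Omega).
\end{equation*}
Taking $\log$, dividing by $\log(1/\varepsilon)$ and passing to the $\limsup$ gives $\overline{\dim}_M(\Omega_j)\le \overline{\dim}_M(\Omega)$ for every $j$. Hence the maximum is bounded by $\overline{\dim}_M(\Omega)$.

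\textbf{Upper bound.} Concatenating covers of the individual pieces gives a cover of the union, so
\begin{equation*}
\mathcal{N}_\varepsilon(\Omega)\le \sum_{j=1}^n \mathcal{N}_\varepsilon(\Omega_j)\le n\max_{1\le j\le n}\mathcal{N}_\varepsilon(\Omega_j).
\end{equation*}
Let $D:=\max_j \overline{\dim}_M(\Omega_j)$ and fix $\eta>0$. Since there are only finitely many $j$, there is $\varepsilon_0>0$ such that for every $j$ and every $0<\varepsilon<\varepsilon_0$,
\begin{equation*}
\log \mathcal{N}_\varepsilon(\Omega_j)\le (D+\eta)\log(1/\varepsilon).
\end{equation*}
Combining the two displays,
\begin{equation*}
\frac{\log \mathcal{N}_\varepsilon(\Omega)}{\log(1/\varepsilon)}\le \frac{\log n}{\log(1/\varepsilon)}+D+\eta.
\end{equation*}
The first term tends to $0$ as $\varepsilon\to0$, exactly as the factor $\log(1/C)/\log(1/\varepsilon)$ did in the proof of Lemma \ref{gmt_lemma1}. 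Taking the $\limsup$ and then letting $\eta\to0$ yields $\overline{\dim}_M(\Omega)\le D$.

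\textbf{Main obstacle.} The only delicate point is the $\limsup$ of a maximum of finitely many quantities. We cannot simply exchange the $\limsup$ with the maximum pointwise in $\varepsilon$. We handle this with the uniform threshold $\varepsilon_0$ above, which exists because $n$ is finite. The degenerate case of an empty $\Omega_j$, where $\mathcal{N}_\varepsilon(\Omega_j)=0$, is harmless: such a set contributes $0$ to the sum and can be discarded. This also keeps every logarithm finite.
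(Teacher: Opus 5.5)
Your proof is correct and follows the same route as the paper. The lower bound uses monotonicity of $\mathcal{N}_\varepsilon$ under inclusion, and the upper bound uses $\mathcal{N}_\varepsilon\bigl(\bigcup_j\Omega_j\bigr)\le n\max_j\mathcal{N}_\varepsilon(\Omega_j)$ together with $\log n/\log(1/\varepsilon)\to0$. The only difference is that the paper interchanges $\limsup_{\varepsilon\to0}$ with the finite maximum without comment, whereas you justify it with the uniform threshold $\varepsilon_0$. That interchange is valid exactly because there are finitely many indices, and your argument is a proof of it.
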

\begin{proof}
Clearly, $$\overline{\dim}_M\left(\bigcup_{j=1}^n\Omega_j\right) \ge \overline{\dim}_M\left(\Omega_j\right),\:\text{ for all } 1\le j \le n\:,$$
and thus
\begin{equation} \label{gmt_lemma2_pf_eq1}
\overline{\dim}_M\left(\bigcup_{j=1}^n\Omega_j\right) \ge \max_{1 \le j \le n}\overline{\dim}_M\left(\Omega_j\right)\:.
\end{equation}
On the other hand for $\varepsilon>0$, as union of individual covers of $\Omega_j$ by balls of radii $\le \varepsilon$, constitute a cover of $\bigcup_{j=1}^n\Omega_j$, we have
\begin{equation*}
\mathcal{N}_{\varepsilon}\left(\bigcup_{j=1}^n\Omega_j\right) \le \sum_{j=1}^n \mathcal{N}_{\varepsilon}\left(\Omega_j\right) \le n \max_{1 \le j \le n} \mathcal{N}_{\varepsilon}\left(\Omega_j\right)\:.
\end{equation*}
Hence,
\begin{eqnarray*}
\overline{\dim}_M\left(\bigcup_{j=1}^n\Omega_j\right) &=& \limsup_{\varepsilon \to 0} \frac{\log\left(\mathcal{N}_{\varepsilon}\left(\bigcup_{j=1}^n\Omega_j\right)\right)}{\log\left(\frac{1}{\varepsilon}\right)} \\
& \le & \limsup_{\varepsilon \to 0} \frac{\log\left(n \displaystyle\max_{1 \le j \le n} \mathcal{N}_{\varepsilon}\left(\Omega_j\right)\right)}{\log\left(\frac{1}{\varepsilon}\right)} \\
& \le & \limsup_{\varepsilon \to 0} \frac{\log n}{\log\left(\frac{1}{\varepsilon}\right)} \:+\: \limsup_{\varepsilon \to 0} \frac{\log\left( \displaystyle\max_{1 \le j \le n} \mathcal{N}_{\varepsilon}\left(\Omega_j\right)\right)}{\log\left(\frac{1}{\varepsilon}\right)} \:.
\end{eqnarray*}
As
\begin{equation*}
\frac{\log n}{\log\left(\frac{1}{\varepsilon}\right)} \to 0\:,\:\text{ as } \varepsilon \to 0\:,
\end{equation*}
we get
\begin{equation}\label{gmt_lemma2_pf_eq2}
\overline{\dim}_M\left(\bigcup_{j=1}^n\Omega_j\right) \le \max_{1 \le j \le n} \limsup_{\varepsilon \to 0} \frac{\log\left( \mathcal{N}_{\varepsilon}\left(\Omega_j\right)\right)}{\log\left(\frac{1}{\varepsilon}\right)} = \max_{1 \le j \le n} \overline{\dim}_M\left(\Omega_j\right)\:.
\end{equation}
Combining (\ref{gmt_lemma2_pf_eq1}) and (\ref{gmt_lemma2_pf_eq2}), we get the result.
\end{proof}

We are now in a position to prove Theorem \ref{thm_gmt}.
\begin{proof}[Proof of Theorem \ref{thm_gmt}]
We set for $1 \le j \le n$,
\begin{equation*}
G_j:= \left\{\left(x,f(x)\right) : x \in \Omega_j\right\} \subset Graph(f)\:.
\end{equation*}
Then
\begin{equation} \label{thm_gmt_pf_eq1}
Graph(f) = \bigcup_{j=1}^n G_j\:.
\end{equation}
Now as $\mathcal{X}$ is compact and $f$ is continuous, we have that $Graph(f)$ is a compact subset of $\mathcal{X} \times \R$ and consequently all the above subsets $G_j$ are totally bounded. 

Next for each $1 \le j \le n$, we set $\tilde{G}_j$ to be the graph of  $f \circ \varphi^{-1}_j$ on $\varphi_j\left(\Omega_j\right)$, that is,
\begin{equation*}
\tilde{G}_j:=\left\{\left(y,f \circ \varphi^{-1}_j(y)\right) : y \in \varphi_j\left(\Omega_j\right)\right\}\:.
\end{equation*}
We next define,
\begin{eqnarray*}
\Phi_j : G_j &\to & \tilde{G}_j \\
\left(x,f(x)\right) &\mapsto & \left(\varphi_j(x),f(x)\right)\:.
\end{eqnarray*}
As $\varphi_j$ is a bi-Lipschitz map from $\left(\Omega_j,d\right)$ onto $\left(\varphi_j\left(\Omega_j\right),d_{\R^m}\right)$ ($d_{\R^m}$ denotes the canonical metric on $\R^m$), it follows that $\Phi_j$ is a bi-Lipschitz map from $\left(G_j,\:d_{\mathcal{X} \times \R}\right)$ onto  $\left(\tilde{G}_j,\:d_{\R^{m+1}}\right)$, where $d_{\mathcal{X} \times \R}$ denotes the restriction of the product metric on $\mathcal{X} \times \R$ to $G_j$. Thus $\tilde{G}_j$ being the image of a totally bounded set $G_j$ under $\Phi_j$ is also totally bounded and moreover, by Lemma \ref{gmt_lemma1},
\begin{equation} \label{thm_gmt_pf_eq2}
\overline{\dim}_M\left(G_j\right)=\overline{\dim}_M\left(\tilde{G}_j\right)\:.
\end{equation}
Next we note that as $f \in C^\gamma(\mathcal{X})$, we also have  $f \circ \varphi^{-1}_j \in C^\gamma(\varphi_j\left(\Omega_j\right))$ and hence, $\tilde{G}_j$ is the graph of a real-valued $C^\gamma$ function on a bounded set $\varphi_j\left(\Omega_j\right) \subset \R^m$. Then by Lemma \ref{gmt_lemma3}, we have that
\begin{equation} \label{thm_gmt_pf_eq3}
\overline{\dim}_M\left(\tilde{G}_j\right) \le (m+1)-\gamma\:.
\end{equation}
Finally, applying Lemma \ref{gmt_lemma2} on (\ref{thm_gmt_pf_eq1}) and then combining it with (\ref{thm_gmt_pf_eq2}) and (\ref{thm_gmt_pf_eq3}), we get the desired bound,
\begin{equation*}
\overline{\dim}_M\left(Graph(f)\right) =\max_{1 \le j \le n}\overline{\dim}_M\left(G_j\right)=\max_{1 \le j \le n}\overline{\dim}_M\left(\tilde{G}_j\right) \le (m+1)-\gamma\:.
\end{equation*}
\end{proof}

We have the following important special case of Theorem \ref{thm_gmt}, which will be crucial for us later:
\begin{corollary} \label{graph_corollary}
Let $\mathcal{X}$ be a $d$-dimensional compact, connected Riemannian manifold  and $f \in C^\gamma(\mathcal{X}),\: \gamma \in (0,1)$, be real-valued. Then 
$$\overline{\dim}_M\left(Graph(f)\right) \le (d+1)-\gamma\:.$$
\end{corollary}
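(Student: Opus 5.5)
The plan is to deduce Corollary \ref{graph_corollary} directly from Theorem \ref{thm_gmt}. To do so, $\mathcal{X}$ must be covered by finitely many subsets $\Omega_j$, each carrying a bi-Lipschitz map into $\R^d$. We take $\mathcal{X}$ with its Riemannian distance $d$ and $\mathrm{Graph}(f)$ with the product metric on $\mathcal{X}\times\R$, exactly as in the proof of Theorem \ref{thm_gmt}.

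\textbf{Step 1 (local charts).} For each $p\in\mathcal{X}$ we choose a coordinate chart $(U_p,\psi_p)$ with $p\in U_p$ and $\psi_p:U_p\to\psi_p(U_p)\subset\R^d$. Normal coordinates at $p$ on a small geodesic ball are a natural choice. We then pick a smaller closed geodesic ball $\overline{B}(p,r_p)$ whose closure lies in $U_p$. We take $r_p$ less than the convexity radius at $p$, so that the ball is geodesically convex. By compactness, finitely many open balls $B(p_j,r_{p_j})$, $1\le j\le n$, cover $\mathcal{X}$, and we set $\Omega_j:=\overline{B}(p_j,r_{p_j})$ and $\varphi_j:=\psi_{p_j}|_{\Omega_j}$.

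\textbf{Step 2 (bi-Lipschitz property).} This is the only step that requires real care. On the compact set $\Omega_j$, the metric coefficients $g_{ik}$ in the chart are continuous and positive definite, so there is $\Lambda\ge1$ with
\begin{equation*}
\Lambda^{-1}|\xi|^2\le g_{ik}(x)\xi^i\xi^k\le \Lambda|\xi|^2
\end{equation*}
for all $x\in\Omega_j$ and $\xi\in\R^d$.

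For the Lipschitz bound on $\varphi_j$, let $x,y\in\Omega_j$ and join them by the minimizing geodesic supplied by Hopf–Rinow. By convexity this geodesic stays inside $\Omega_j$. Its Euclidean image has length at most $\sqrt{\Lambda}\,d(x,y)$, which gives $|\varphi_j(x)-\varphi_j(y)|\le\sqrt{\Lambda}\,d(x,y)$.

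The reverse inequality needs a further restriction. In normal coordinates the image $\varphi_j(\Omega_j)$ is a Euclidean ball, hence convex, so we may use the straight segment between $\varphi_j(x)$ and $\varphi_j(y)$. That segment lies in the chart, and its pullback is a curve of Riemannian length at most $\sqrt{\Lambda}\,|\varphi_j(x)-\varphi_j(y)|$. This yields $d(x,y)\le\sqrt{\Lambda}\,|\varphi_j(x)-\varphi_j(y)|$.

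I expect this to be the main obstacle. One must make sure that both connecting curves stay inside the region where the metric comparison holds. Choosing radii below the convexity/injectivity radius and using normal coordinates, whose image balls are Euclidean-convex, handles both directions.

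\textbf{Step 3 (conclusion).} With $m=d$, the hypotheses of Theorem \ref{thm_gmt} are satisfied: $\mathcal{X}$ is compact, it is the finite union $\bigcup_j\Omega_j$, and each $\varphi_j$ is bi-Lipschitz onto its image in $\R^d$. Theorem \ref{thm_gmt} then gives $\overline{\dim}_M(\mathrm{Graph}(f))\le (d+1)-\gamma$ for every real-valued $f\in C^\gamma(\mathcal{X})$.
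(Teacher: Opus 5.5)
Your proposal is correct and takes essentially the same route as the paper: cover the compact manifold by finitely many chart domains on which the chart maps are bi-Lipschitz, then apply Theorem \ref{thm_gmt} with $m=d$. The paper only asserts that chart maps are locally bi-Lipschitz; your Step 2 supplies the missing verification correctly, using normal coordinates on convex geodesic balls so that both connecting curves stay in the region where the metric comparison holds.
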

\begin{proof}
Corollary \ref{graph_corollary} follows at once from Theorem \ref{thm_gmt} by combining the fact that chart maps on a Riemannian manifold are locally bi-Lipschitz and the underlying compactness of $\mathcal{X}$.
\end{proof}

\section{Oscillatory expansions of zonal spherical functions}
\label{sec6}
In this section, our main result is the following oscillatory expansion of the zonal spherical functions of degree $n \ge 1$, on a rank one Riemannian symmetric space of compact type:
\begin{theorem} \label{thm_oscillatory_zn}
\begin{enumerate}
\item \label{part1} On $P^d(\R)$, for integers $m \ge 1$ and $\theta \in [0,\pi]$, we have
\begin{equation*}
Z_n(\theta)=\psi^+_n(\theta)e^{in\frac{\theta}{2}}\:+\:\psi^-_n(\theta)e^{-in\frac{\theta}{2}}\:+\:E(\theta,n)\:,
\end{equation*}
where
\begin{equation*}
\left|\psi^\pm_n(\theta)\right|\lesssim \frac{n^{d-1}}{\langle n \theta \rangle^{\frac{d-1}{2}}}\:,\:\: \left|\psi^\pm_n(\theta)-\psi^\pm_{n-1}(\theta)\right|\lesssim \frac{n^{d-2}}{\langle n \theta \rangle^{\frac{d-1}{2}}}\:,\:\text{ and } \left|E(\theta,n)\right| \lesssim n^{d-(m+1)}\:. 
\end{equation*}
\item \label{part2} On $\Sb^d,P^d(\C),P^d(\Hb)$ and $P^{16}(Cay)$, for  integers  $m \ge 1$ and $\theta \in \left[0,\frac{\pi}{2}\right]$, we have
\begin{equation*}
Z_n(\theta)=\psi^+_n(\theta)e^{in\theta}\:+\:\psi^-_n(\theta)e^{-in\theta}\:+\:E_1(\theta,n)\:,
\end{equation*}
where
\begin{equation*}
\left|\psi^\pm_n(\theta)\right|\lesssim \frac{n^{d-1}}{\langle n \theta \rangle^{\frac{d-1}{2}}}\:,\:\: \left|\psi^\pm_n(\theta)-\psi^\pm_{n-1}(\theta)\right|\lesssim \frac{n^{d-2}}{\langle n \theta \rangle^{\frac{d-1}{2}}}\:,\:\text{ and } \left|E_1(\theta,n)\right| \lesssim n^{d-(m+1)}\:. 
\end{equation*}
\item \label{part3} On $\Sb^d,P^d(\C),P^d(\Hb)$ and $P^{16}(Cay)$, for  positive integers $m >\frac{d-2(1+\beta)}{4}$ (where $\beta$ is as in Table \ref{table:2}) and $\theta \in \left[\frac{\pi}{2},\pi\right]$, we have
\begin{equation*}
Z_n(\theta)=\lambda^+_n(\theta)e^{in\theta}\:+\:\lambda^-_n(\theta)e^{-in\theta}\:+\:E_2(\theta,n)\:,
\end{equation*}
where
\begin{eqnarray*}
&&\left|\lambda^\pm_n(\theta)\right|\lesssim \frac{n^{\beta + \frac{d}{2}}}{\langle n (\pi-\theta) \rangle^{\beta + \frac{1}{2}}}\:,\:\: \left|\lambda^\pm_n(\theta)-\lambda^\pm_{n-1}(\theta)\right|\lesssim \frac{n^{\left(\beta + \frac{d}{2}\right)-1}}{\langle n (\pi-\theta) \rangle^{\beta + \frac{1}{2}}}\:,\\
&&\text{ and } \left|E_2(\theta,n)\right| \lesssim n^{\left(\beta + \frac{d}{2}\right)-m}\:. 
\end{eqnarray*}
\end{enumerate}
\end{theorem}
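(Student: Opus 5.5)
The plan is to combine the Frenzen--Wong expansion (Lemma \ref{frenzen}) with the asymptotic expansion of Bessel functions, and to use the symmetry (\ref{jacobi_symmetry}) for the region near the cut locus. Start with part (\ref{part2}), where $s=1$. By (\ref{in_terms_jacobi}) and (\ref{jacobi_at_1}),
\[
Z_n(\theta)=d_n\,P^{(\alpha,\beta)}_n(\cos\theta)/P^{(\alpha,\beta)}_n(1),
\]
and the prefactor is $d_n\,\Gamma(n+\alpha+1)/(n!\,P_n^{(\alpha,\beta)}(1))\asymp n^{d-1}$. Apply Lemma \ref{frenzen} with $N=n+\frac{\alpha+\beta+1}{2}$ on $[0,\pi/2]$; the hypothesis $\alpha-\beta>-2m$ holds for every $m\ge 1$ from Table \ref{table:2}. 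On this interval the factor $(\sin\frac\theta2)^{-\alpha}(\cos\frac\theta2)^{-\beta}(\theta/\sin\theta)^{1/2}$ is $\asymp\theta^{-\alpha}$. The remainder term therefore contributes $n^{d-1}\theta^{-\alpha}\theta^{\alpha+m}N^{-m}\lesssim n^{d-1-m}$, which is acceptable after relabeling $m\mapsto m+1$.

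For each main term $A_l(\theta)\theta^{-\alpha}J_{\alpha+l}(N\theta)/N^{\alpha+l}$, use the Hankel representation $J_\nu(z)=H^{(1)}_\nu(z)/2+H^{(2)}_\nu(z)/2$ with $H^{(1,2)}_\nu(z)=e^{\pm iz}h^{\pm}_\nu(z)$. Here $|h^\pm_\nu(z)|\lesssim z^{-1/2}$ for $z\ge1$, and the derivatives satisfy $|\partial_z^k h^\pm_\nu|\lesssim z^{-1/2-k}$. For $z\le1$, bound $J_\nu$ directly by $z^\nu$ and put the whole term into $\psi^+_n$ with the factor $e^{-in\theta}$, which is harmless since $n\theta\le 1$. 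Since $e^{\pm iN\theta}=e^{\pm in\theta}e^{\pm i\frac{\alpha+\beta+1}{2}\theta}$, the smooth phase is absorbed into $\psi^\pm_n$.

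The amplitude bound then reads
\[
n^{d-1}\theta^{-\alpha}\,\theta^l (n\theta)^{-1/2}/ n^{\alpha+l}\cdot\ldots=n^{d-1}(n\theta)^{-\alpha-1/2}\cdot\theta^l n^{-l}(n\theta)^{l}\ldots
\]
After bookkeeping, $|\theta^{l}N^{-\alpha-l}\theta^{-\alpha}J_{\alpha+l}(N\theta)|\lesssim \theta^{-\alpha}N^{-\alpha}(N\theta)^{-1/2}\cdot N^{-l}\theta^{l}\lesssim \langle n\theta\rangle^{-\alpha-1/2}$. Because $\alpha+\frac12=\frac{d-1}{2}$, this gives $|\psi^\pm_n|\lesssim n^{d-1}\langle n\theta\rangle^{-\frac{d-1}{2}}$.

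For the difference bound $\psi^\pm_n-\psi^\pm_{n-1}$, regard $\psi^\pm$ as a smooth function of the continuous variable $n$ (through $N$, the prefactor, and $h^\pm_\nu(N\theta)$) and apply the mean value theorem in $n$:
\begin{itemize}
\item differentiating the prefactor $n^{d-1}$-type ratio of Gamma functions costs $n^{-1}$;
\item differentiating $h_\nu^\pm(N\theta)N^{-\alpha-l}$ in $N$ gives $\theta\,\partial_z h$, i.e. a factor $\theta/(N\theta)=N^{-1}$, or a factor $N^{-1}$ from the power.
\end{itemize}
Hence the difference gains exactly $n^{-1}$. The $z\le 1$ regime is handled the same way, with $J_\nu(N\theta)$ analytic in $N$.

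Part (\ref{part1}) then follows from the same computation with $s=2$, so the argument of the Jacobi polynomial is $\cos(\theta/2)$ with $\theta/2\in[0,\pi/2]$. Here $\alpha=\beta$, and $\langle n\theta/2\rangle\asymp\langle n\theta\rangle$. The oscillation $e^{\pm iN\theta/2}$ gives $e^{\pm in\theta/2}$ times a smooth phase.

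For part (\ref{part3}), write $\theta=\pi-\phi$ with $\phi\in[0,\pi/2]$. By (\ref{jacobi_symmetry}), $P_n^{(\alpha,\beta)}(\cos\theta)=(-1)^nP_n^{(\beta,\alpha)}(\cos\phi)$, and we apply Lemma \ref{frenzen} with the roles of $\alpha$ and $\beta$ swapped. The hypotheses are then $\beta>-\frac12$, which holds, and $\beta-\alpha>-2m$, i.e. $m>\frac{d-2-2\beta}{4}=\frac{d-2(1+\beta)}{4}$, which is exactly the assumed condition. The prefactor is now $d_n\Gamma(n+\beta+1)/(n!\,P_n^{(\alpha,\beta)}(1))\asymp n^{d-1+\beta-\alpha}=n^{\beta+d/2}$. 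The analysis above, with $\alpha$ replaced by $\beta$, gives amplitude bounds $n^{\beta+d/2}\langle n\phi\rangle^{-\beta-1/2}$ and remainder $n^{\beta+d/2-m}$. The oscillation $e^{\pm iN\phi}$, multiplied by $(-1)^n=e^{in\pi}$, becomes $e^{\mp in\theta}$ times a phase $e^{\pm i(N-n)\phi}$, which is absorbed into $\lambda^\pm_n$.

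The main obstacle is the difference estimate. One must check that differentiating in $n$ never loses powers of $\theta$ near $n\theta\sim1$, and that the $l$-dependent coefficients $A_l(\theta)$ are independent of $n$ (they are, in Frenzen--Wong). I would handle this carefully through the uniform derivative bounds on $h^\pm_\nu$ and on $z^{-\nu}J_\nu(z)$.
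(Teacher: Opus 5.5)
Your architecture is the paper's: Frenzen--Wong, a two-term oscillatory splitting of the Bessel factor, the mean value theorem in $n$ for the differences, and (\ref{jacobi_symmetry}) near the cut locus. Several points are handled correctly, some more explicitly than in the paper. You include the prefactor $d_n\Gamma(n+\alpha+1)/(n!\,P^{(\alpha,\beta)}_n(1))$ in the difference estimate. In part (\ref{part3}) you absorb the sign via $(-1)^n e^{\pm in(\pi-\theta)}=e^{\mp in\theta}$, so no alternating factor enters $\lambda^\pm_n$. No relabeling of $m$ is needed, since $n^{d-1-m}=n^{d-(m+1)}$ is already the claimed remainder.

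\textbf{The gap: the sharp split at $N\theta=1$.} You use the Hankel decomposition only for $N\theta\ge 1$ and put the whole term into $\psi^+_n$ for $N\theta\le 1$. So for fixed $\theta$, the definition of $\psi^\pm_n(\theta)$ jumps as $n$ varies. At the (at most one) index with $(N-1)\theta<1\le N\theta$, you have $\psi^-_{n-1}(\theta)=0$. Meanwhile $\psi^-_n(\theta)$ is the whole $H^{(2)}$-part, and its $l=0$ (Hilb) term, with $A_0\equiv 1$, has modulus
\[
\asymp n^{d-1}\,(N\theta)^{-\alpha}\,\bigl|H^{(2)}_\alpha(N\theta)\bigr|\asymp n^{d-1}.
\]
This holds because $N\theta\in[1,1+\frac{\pi}{2}]$ and $H^{(2)}_\alpha$ has no zeros on $(0,\infty)$; the terms with $l\ge 1$ are smaller by a factor of roughly $n^{-2l}$. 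The claimed bound at that index is $n^{d-2}\langle n\theta\rangle^{-\frac{d-1}{2}}\asymp n^{d-2}$, so the difference estimate fails by a factor of $n$.

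Running the mean value theorem inside each regime separately cannot detect this, because $[N-1,N]$ straddles the switch. Nor can you push the Hankel splitting down to $z=0$: $z^{-\nu}H^{(1,2)}_\nu(z)$ blows up like $z^{-2\nu}$, or logarithmically if $\nu=0$.

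\textbf{The repair: make the split smooth.} Take $\chi\in C^\infty$ with $\chi=1$ on $[0,1]$ and $\chi=0$ on $[2,\infty)$. Put $\chi(N\theta)\cdot(\text{term})\cdot e^{-in\theta}$ into $\psi^+_n$, and decompose only $(1-\chi(N\theta))\cdot(\text{term})$ via $H^{(1)}$ and $H^{(2)}$. Since $\partial_N[\chi(N\theta)]=\theta\chi'(N\theta)$ is supported where $\theta\asymp N^{-1}$, it costs only $N^{-1}$. The mean value argument then works on all of $[N-1,N]$.

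This is exactly what the paper's Lemma \ref{Bessel_function_expansion} packages: $r^{-\nu}J_\nu(r)=c_\nu\bigl(\omega^+(r)e^{ir}+\omega^-(r)e^{-ir}\bigr)$, with $\omega^\pm$ smooth on all of $[0,\infty)$, $|\omega^\pm|\lesssim\langle r\rangle^{-\nu-\frac12}$ and $|\partial_r\omega^\pm|\lesssim\langle r\rangle^{-\nu-\frac32}$. So the paper never needs a case split. Your smoothed Hankel version has one mild advantage: it works for every real $\nu\ge 0$, whereas the paper's lemma (proved via Fourier transforms of spherical measures) is stated only for $\nu\in\frac12\N\cup\{0\}$.
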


Before proceeding with the proof of Theorem \ref{thm_oscillatory_zn}, we make the following remark:
\begin{remark} \label{epxansion_symmetry_sphere}
In the case of $\Sb^d$, since $s=1$ and $\alpha=\beta=(d-2)/2$, the expansions of $Z_n$ in parts (\ref{part2}) and (\ref{part3}) of Theorem \ref{thm_oscillatory_zn} are identical. This can be attributed to the fact that the antipodal manifold/cut locus corresponding to any point is singleton and hence from the oscillatory expansion view-point, both the origin and the cut locus behave similarly.
\end{remark}

In view of (\ref{in_terms_jacobi}), to prove Theorem \ref{thm_oscillatory_zn}, we focus on the oscillatory expansions of the corresponding Jacobi polynomials. But first, we recall the following result for Bessel functions of the first kind (see \cite{Szego} for more details) which follows by relating Bessel functions to Fourier transforms of surface measures on spheres (see \cite[p. 7]{MathZ}): 
\begin{lemma} \label{Bessel_function_expansion}
We have for integers $d \ge 2$,
\begin{equation*}
(2\pi)^{\frac{d}{2}} \frac{J_{\frac{d-2}{2}}(r)}{r^\frac{d-2}{2}} = \omega^+_{d-1}(r)e^{ir}\:+\:\omega^-_{d-1}(r)e^{-ir}\:,\:\:r \ge 0\:,
\end{equation*}
where
\begin{equation} \label{coefficient_estimates}
\left|\omega^\pm_{d-1}(r)\right| \lesssim \langle r\rangle^{-\frac{d-1}{2}}\:,\:\:\text{ and } 
\left|\frac{\partial}{\partial r}\omega^\pm_{d-1}(r)\right| \lesssim \langle r\rangle^{-\frac{d+1}{2}}\:.
\end{equation}
\end{lemma}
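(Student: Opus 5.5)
The plan is to use the standard representation of $J_\nu(r)/r^\nu$, for $\nu=\frac{d-2}{2}$, as the Fourier transform of the normalized surface measure $\sigma$ on $\Sb^{d-1}\subset\R^d$. Concretely, $(2\pi)^{\frac d2}J_{\frac{d-2}{2}}(|\xi|)/|\xi|^{\frac{d-2}{2}}=c_d\,\widehat{\sigma}(\xi)$. Because the right side is radial, we may take $\xi=re_d$ and reduce to a one–dimensional oscillatory integral (Poisson's integral):
\begin{equation*}
\frac{J_{\nu}(r)}{r^{\nu}}=c_\nu\int_{-1}^{1}e^{irt}(1-t^2)^{\nu-\frac12}\,dt\:,\qquad \nu=\tfrac{d-2}{2}\ge 0\:.
\end{equation*}
When $d=2$ we have $\nu=0$ and the weight $(1-t^2)^{-1/2}$ is integrable, so the representation is still valid.

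Next, fix a smooth partition of unity $1=\chi_+(t)+\chi_-(t)$ on $[-1,1]$, with $\chi_+$ supported in $t>-\frac12$ and $\chi_-$ supported in $t<\frac12$. We then set
\begin{equation*}
\omega^{\pm}_{d-1}(r):=c\,e^{\mp ir}\int_{-1}^1 e^{irt}\chi_\pm(t)(1-t^2)^{\nu-\frac12}\,dt\:.
\end{equation*}
With this definition the identity in the lemma holds for all $r\ge0$. For $r\le1$ the bounds in (\ref{coefficient_estimates}) are immediate, since $|\omega^\pm|$ and $|\partial_r\omega^\pm|$ are bounded there: differentiating under the integral only introduces bounded factors $(t\mp1)$.

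For $r\ge1$, consider $\omega^+$ and substitute $u=1-t$, so that $\omega^+(r)=c\int_0^2 e^{-iru}\,u^{\nu-\frac12}\,a(u)\,du$ with $a$ smooth and compactly supported in $[0,\frac32)$. This is the classical endpoint-singularity integral. By the standard asymptotics (van der Corput/Erdélyi: integrate by parts after splitting at $u\sim 1/r$, or compare with $\int_0^\infty e^{-iru}u^{\mu-1}e^{-u}du=\Gamma(\mu)(1+ir)^{-\mu}$), it is $\mathcal O(r^{-(\nu+\frac12)})=\mathcal O(r^{-\frac{d-1}{2}})$. The derivative is $\partial_r\omega^+=-ic\int e^{-iru}u^{\nu+\frac12}a(u)\,du$, which is the same type of integral with exponent raised by one, hence $\mathcal O(r^{-(\nu+\frac32)})=\mathcal O(r^{-\frac{d+1}{2}})$. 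The function $\omega^-$ is handled symmetrically near $t=-1$.

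The one delicate point is the endpoint estimate with a possibly singular weight $u^{\nu-\frac12}$ when $d=2$. The cleanest route is the explicit Gamma-function comparison: write $a(u)=e^{-u}\big(b(0)+u\,\tilde b(u)\big)$, where $b(u)=a(u)e^{u}$ is smooth, and iterate. The leading term gives exactly $r^{-\mu}$, and each remainder gains a factor $u$, hence one more power of decay. The nonstationary part is harmless, since the phase has no critical points on the support away from the endpoints.
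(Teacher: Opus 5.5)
Your proposal is correct and follows the route the paper indicates. The paper gives no argument of its own: it cites the representation of $J_{\frac{d-2}{2}}(r)/r^{\frac{d-2}{2}}$ as the Fourier transform of the surface measure on $\Sb^{d-1}$ (via Erdo\u{g}an et al., p.~7), and your reduction to Poisson's integral, partition of unity at $t=\pm1$, and endpoint (Erd\'elyi-type) asymptotics for $\omega^\pm$ and $\partial_r\omega^\pm$ are a self-contained version of that same argument, including the case $d=2$.
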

Setting the notation,
\begin{equation*}
\frac{1}{2}\N \cup\{0\}:=\left\{\frac{k-2}{2} \mid k \ge 2,\:k\in \N\right\}\:,
\end{equation*}
we now present the oscillatory expansions of Jacobi polynomials:
\begin{lemma} \label{jacobi_lemma1}
Let $\alpha,\beta \in \frac{1}{2}\N \cup \{0\}$. Then for integers $n \ge 1$, $m>\max \left\{0,\frac{\beta - \alpha}{2}\right\}$ and $\theta \in \left[0,\frac{\pi}{2}\right]$, we have
\begin{equation*}
P^{(\alpha, \beta)}_n(\cos \theta)=\Psi^+_n(\theta)e^{in\theta}\:+\:\Psi^-_n(\theta)e^{-in\theta}\:+\:\mathcal{E}_1(\theta,n)\:,
\end{equation*}
where
\begin{equation*}
\left|\Psi^\pm_n(\theta)\right|\lesssim \frac{n^\alpha}{\langle n \theta \rangle^{\alpha+\frac{1}{2}}}\:,\:\: \left|\Psi^\pm_n(\theta)-\Psi^\pm_{n-1}(\theta)\right|\lesssim \frac{n^{\alpha-1}}{\langle n \theta \rangle^{\alpha +\frac{1}{2}}}\:,\:\text{ and } \left|\mathcal{E}_1(\theta,n)\right| \lesssim n^{\alpha-m}\:. 
\end{equation*}
\end{lemma}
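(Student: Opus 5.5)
The plan is to start from the Frenzen--Wong expansion, Lemma \ref{frenzen}, with the given $m$. Its hypotheses hold: $\alpha\ge 0>-\frac12$, $\alpha+\beta\ge 0\ge -1$, and $\alpha-\beta>-2m$ is exactly the assumption $m>\frac{\beta-\alpha}{2}$. For $\theta\in[0,\frac{\pi}{2}]$ and $N=n+\frac{\alpha+\beta+1}{2}$ this gives
\begin{equation*}
P^{(\alpha,\beta)}_n(\cos\theta)=\frac{\Gamma(n+\alpha+1)}{n!}\,W(\theta)\left[\sum_{l=0}^{m-1}A_l(\theta)\frac{J_{\alpha+l}(N\theta)}{N^{\alpha+l}}+\theta^{\alpha+m}\mathcal{O}(N^{-m})\right],
\end{equation*}
where
\begin{equation*}
W(\theta):=\left(\sin\tfrac{\theta}{2}\right)^{-\alpha}\left(\cos\tfrac{\theta}{2}\right)^{-\beta}\left(\tfrac{\theta}{\sin\theta}\right)^{\frac12}\asymp\theta^{-\alpha}\quad\text{on }\left[0,\tfrac{\pi}{2}\right].
\end{equation*}
I will also use $\frac{\Gamma(n+\alpha+1)}{n!}\asymp n^\alpha$. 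The remainder term is then handled directly: it is $\lesssim n^\alpha\theta^{-\alpha}\theta^{\alpha+m}n^{-m}\lesssim n^{\alpha-m}$, and I define $\mathcal{E}_1(\theta,n)$ to be this remainder.

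For each main term I rewrite the Bessel factor as $\frac{J_{\alpha+l}(N\theta)}{N^{\alpha+l}}=\theta^{\alpha+l}\frac{J_{\alpha+l}(N\theta)}{(N\theta)^{\alpha+l}}$. Since $\alpha+l\in\frac12\N\cup\{0\}$, this is the case $d'=2\alpha+2l+2\ge 2$ of Lemma \ref{Bessel_function_expansion}, which writes the quotient as $c\,[\omega^+_{d'-1}(N\theta)e^{iN\theta}+\omega^-_{d'-1}(N\theta)e^{-iN\theta}]$ with $|\omega^\pm|\lesssim\langle N\theta\rangle^{-(\alpha+l+\frac12)}$ and $|\partial_r\omega^\pm|\lesssim\langle r\rangle^{-(\alpha+l+\frac32)}$. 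Writing $e^{\pm iN\theta}=e^{\pm in\theta}e^{\pm i\frac{\alpha+\beta+1}{2}\theta}$ and absorbing the $n$-independent unimodular phase, I set
\begin{equation*}
\Psi^\pm_n(\theta)=c\,\frac{\Gamma(n+\alpha+1)}{n!}\,W(\theta)\,e^{\pm i\frac{\alpha+\beta+1}{2}\theta}\sum_{l=0}^{m-1}A_l(\theta)\,\theta^{\alpha+l}\,\omega^\pm_{2\alpha+2l+1}(N\theta).
\end{equation*}
The size bound follows from $|A_l|\lesssim\theta^l$: the $l$-th summand is $\lesssim n^\alpha\theta^{2l}\langle n\theta\rangle^{-\alpha-l-\frac12}\le n^\alpha\langle n\theta\rangle^{-\alpha-\frac12}$, because $\theta\le\frac{\pi}{2}$ and $N\asymp n$.

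For the difference $\Psi^\pm_n-\Psi^\pm_{n-1}$ I rely on the coefficients $A_l(\theta)$ (and $W$, and the phase) depending only on $\theta$, as in Frenzen--Wong. The $n$-dependence then enters only through two factors. The first is $\frac{\Gamma(n+\alpha+1)}{n!}$, whose consecutive difference is $\lesssim n^{\alpha-1}$ and so costs exactly one power of $n$. The second is $\omega^\pm(N\theta)$, where $N$ shifts by $1$. By the mean value theorem and the derivative bound,
\begin{equation*}
|\omega^\pm(N\theta)-\omega^\pm((N-1)\theta)|\lesssim\theta\langle n\theta\rangle^{-(\alpha+l+\frac32)}.
\end{equation*}
Since $\frac{\theta}{\langle n\theta\rangle}\lesssim\frac1n$, this gives a gain of $n^{-1}$ over the size bound. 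Together these yield $|\Psi^\pm_n-\Psi^\pm_{n-1}|\lesssim n^{\alpha-1}\langle n\theta\rangle^{-\alpha-\frac12}$.

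The main obstacle is this difference estimate. It requires confirming that the Frenzen--Wong coefficients $A_l$ are genuinely $n$-independent, and that the derivative bound in (\ref{coefficient_estimates}) is available for every order $\alpha+l$ used. Should the $A_l$ turn out to involve $N$, I would instead expand them in powers of $N^{-1}$ and push the excess into $\mathcal{E}_1$.
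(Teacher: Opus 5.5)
Your proposal is correct and takes essentially the same route as the paper. Both arguments start from the Frenzen--Wong expansion and apply Lemma \ref{Bessel_function_expansion} with $d'=2(\alpha+l)+2$, use Stirling for the size and remainder bounds, and split $\Psi^\pm_n-\Psi^\pm_{n-1}$ into a Gamma-ratio difference plus a mean-value estimate on $\omega^\pm$.

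The point you flag is fine. The Frenzen--Wong coefficients $A_l(\theta)$ depend only on $\theta,\alpha,\beta$, and the paper uses this implicitly when it pulls $\theta^{2l}$ out of the difference, so no fallback is needed. One small correction: the normalizing constant $(2\pi)^{-(\alpha+l+1)}$ depends on $l$, so it belongs inside the sum rather than in front of it.
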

\begin{proof}
By Lemma \ref{frenzen}, for positive integers $m>\frac{\beta - \alpha}{2}$ and $\theta \in \left[0,\frac{\pi}{2}\right]$ we have,
\begin{equation*}
P^{(\alpha,\beta)}_n(\cos \theta)= \frac{\Gamma(n+\alpha+1)}{n!} \left[\sum_{l=0}^{m-1}B_l(\theta)\frac{J_{\alpha+l}(N\theta)}{{\left(N\theta\right)}^{\alpha + l}} + \mathcal{O}\left(\frac{\theta^m}{N^m}\right)\right]\:,
\end{equation*}
where
\begin{equation*}
\left|B_l(\theta)\right| \lesssim \theta^{2l}\:,\text{ and }N=n+\frac{\alpha + \beta +1}{2}\:.
\end{equation*}
Then plugging in Lemma \ref{Bessel_function_expansion} in the above, we get
\begin{eqnarray*}
P^{(\alpha,\beta)}_n(\cos \theta)=&& \frac{\Gamma(n+\alpha+1)}{n!}e^{i\frac{(\alpha + \beta +1)\theta}{2}}\left[\sum_{l=0}^{m-1}
\tilde{B}_l(\theta)\omega^+_{2(\alpha + l)+1}(N\theta)\right]e^{in\theta} \\
&+& \frac{\Gamma(n+\alpha+1)}{n!}e^{-i\frac{(\alpha + \beta +1)\theta}{2}}\left[\sum_{l=0}^{m-1}
\tilde{B}_l(\theta)\omega^-_{2(\alpha + l)+1}(N\theta)\right]e^{-in\theta} \\
&+& \frac{\Gamma(n+\alpha+1)}{n!}\mathcal{O}\left(\frac{\theta^m}{N^m}\right)\:.
\end{eqnarray*}
Now setting
\begin{eqnarray*}
&&\Psi^\pm_n(\theta) :=\frac{\Gamma(n+\alpha+1)}{n!}e^{\pm i\frac{(\alpha + \beta +1)\theta}{2}}\left[\sum_{l=0}^{m-1}
\tilde{B}_l(\theta)\omega^\pm_{2(\alpha + l)+1}\left(\left(n+\frac{\alpha + \beta +1}{2}\right)\theta\right)\right]\:,\\
&&\mathcal{E}_1(\theta,n):=\frac{\Gamma(n+\alpha+1)}{n!}\mathcal{O}\left(\frac{\theta^m}{N^m}\right)\:,
\end{eqnarray*}
we are left to prove the estimates. At this juncture, we recall by Stirling's approximation that
\begin{equation} \label{Stirling}
\frac{\Gamma(n+\alpha+1)}{n!} \asymp n^\alpha\:,\:n \in \N\:.
\end{equation}
Thus for the remainder term, we get
\begin{equation*}
\left|\mathcal{E}_1(\theta,n)\right| \lesssim n^{\alpha -m}\:.
\end{equation*}
Next we have by (\ref{coefficient_estimates}) and (\ref{Stirling}),
\begin{eqnarray*}
\left|\Psi^\pm_n(\theta)\right| &\lesssim & n^\alpha \sum_{l=0}^{m-1} \theta^{2l} \langle n\theta\rangle^{-(\alpha +l)-\frac{1}{2}} \\
&=& \frac{n^\alpha}{\langle n\theta\rangle^{\alpha + \frac{1}{2}}} \sum_{l=0}^{m-1} \theta^{2l} \langle n\theta\rangle^{-l} \\
&\lesssim & \frac{n^\alpha}{\langle n\theta\rangle^{\alpha + \frac{1}{2}}}\:.
\end{eqnarray*}
We finally look at the difference of the succesive terms,
\begin{eqnarray*}
&&|\Psi^\pm_n(\theta) -\Psi^\pm_{n-1}(\theta)| \\
&\lesssim& \sum_{l=0}^{m-1} \theta^{2l}\\
&\times&\left|\frac{\Gamma(n+\alpha+1)}{n!}\omega^\pm_{2(\alpha + l)+1}\left(\left(n+\frac{\alpha + \beta +1}{2}\right)\theta\right)-\frac{\Gamma(n+\alpha)}{(n-1)!}\omega^\pm_{2(\alpha + l)+1}\left(\left(n+\frac{\alpha + \beta -1}{2}\right)\theta\right)\right| \\
&\le & \sum_{l=0}^{m-1} \theta^{2l} \frac{\Gamma(n+\alpha+1)}{n!} \left|\omega^\pm_{2(\alpha + l)+1}\left(\left(n+\frac{\alpha + \beta +1}{2}\right)\theta\right)- \omega^\pm_{2(\alpha + l)+1}\left(\left(n+\frac{\alpha + \beta -1}{2}\right)\theta\right)\right| \\&&+ \sum_{l=0}^{m-1} \theta^{2l}\left|\frac{\Gamma(n+\alpha+1)}{n!}-\frac{\Gamma(n+\alpha)}{(n-1)!}\right|\left|\omega^\pm_{2(\alpha + l)+1}\left(\left(n+\frac{\alpha + \beta -1}{2}\right)\theta\right)\right|\:.
\end{eqnarray*}
Then by (\ref{Stirling}), the mean-value inequality and the derivative bound in (\ref{coefficient_estimates}), we estimate the first sum,
\begin{eqnarray*}
&&\sum_{l=0}^{m-1} \theta^{2l} \frac{\Gamma(n+\alpha+1)}{n!} \left|\omega^\pm_{2(\alpha + l)+1}\left(\left(n+\frac{\alpha + \beta +1}{2}\right)\theta\right)- \omega^\pm_{2(\alpha + l)+1}\left(\left(n+\frac{\alpha + \beta -1}{2}\right)\theta\right)\right| \\
&\lesssim & \sum_{l=0}^{m-1} \theta^{2l}\: n^\alpha\: \theta\: \langle n\theta \rangle^{-(\alpha +l)-\frac{3}{2}}\\
&\le & \frac{n^{\alpha -1}}{\langle n\theta\rangle^{\alpha + \frac{1}{2}}} \sum_{l=0}^{m-1} \theta^{2l} \: \langle n\theta\rangle^{-l} \\
& \lesssim & \frac{n^{\alpha -1}}{\langle n\theta\rangle^{\alpha + \frac{1}{2}}}\:. 
\end{eqnarray*}
On the other hand, by utilizing the fundamental property of Gamma function: $\Gamma(z+1)=z\Gamma(z)$, the Stirling's approximation and the pointwise bound in (\ref{coefficient_estimates}), we get for the second sum
\begin{eqnarray*}
&&\sum_{l=0}^{m-1} \theta^{2l}\left|\frac{\Gamma(n+\alpha+1)}{n!}-\frac{\Gamma(n+\alpha)}{(n-1)!}\right|\left|\omega^\pm_{2(\alpha + l)+1}\left(\left(n+\frac{\alpha + \beta -1}{2}\right)\theta\right)\right| \\
&\lesssim & n^{\alpha -1} \sum_{l=0}^{m-1} \theta^{2l} \langle n\theta\rangle^{-(\alpha +l)-\frac{1}{2}} \\
&=& \frac{n^{\alpha -1}}{\langle n\theta\rangle^{\alpha + \frac{1}{2}}} \sum_{l=0}^{m-1} \theta^{2l} \langle n\theta\rangle^{-l} \\
&\lesssim & \frac{n^\alpha}{\langle n\theta\rangle^{\alpha + \frac{1}{2}}}\:.
\end{eqnarray*}
Thus we conclude,
\begin{equation*}
|\Psi^\pm_n(\theta) -\Psi^\pm_{n-1}(\theta)| \lesssim  \frac{n^{\alpha -1}}{\langle n\theta\rangle^{\alpha + \frac{1}{2}}}\:.
\end{equation*}
This completes the proof of Lemma \ref{jacobi_lemma1}.
\end{proof}

\begin{lemma} \label{jacobi_lemma2}
Let $\alpha,\beta \in \frac{1}{2}\N \cup \{0\}$. Then for integers $n \ge 1,\:m>\max \left\{0,\frac{\alpha - \beta}{2}\right\}$ and $\theta \in \left[\frac{\pi}{2},\pi\right]$, we have
\begin{equation*}
P^{(\alpha, \beta)}_n(\cos \theta)=\Lambda^+_n(\theta)e^{in\theta}\:+\:\Lambda^-_n(\theta)e^{-in\theta}\:+\:\mathcal{E}_2(\theta,n)\:,
\end{equation*}
where
\begin{equation*}
\left|\Lambda^\pm_n(\theta)\right|\lesssim \frac{n^\beta}{\langle n (\pi-\theta) \rangle^{\beta+\frac{1}{2}}}\:,\:\: \left|\Lambda^\pm_n(\theta)-\Lambda^\pm_{n-1}(\theta)\right|\lesssim \frac{n^{\beta-1}}{\langle n (\pi-\theta) \rangle^{\beta+\frac{1}{2}}}\:,\:\text{ and } \left|\mathcal{E}_2(\theta,n)\right| \lesssim n^{\beta-m}\:. 
\end{equation*}
\end{lemma}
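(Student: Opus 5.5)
The plan is to reduce Lemma \ref{jacobi_lemma2} to Lemma \ref{jacobi_lemma1} through the symmetry relation (\ref{jacobi_symmetry}). For $\theta \in \left[\frac{\pi}{2},\pi\right]$ set $\phi:=\pi-\theta \in \left[0,\frac{\pi}{2}\right]$. Since $\cos\theta=-\cos\phi$, (\ref{jacobi_symmetry}) gives
\begin{equation*}
P^{(\alpha,\beta)}_n(\cos\theta)=(-1)^n\,P^{(\beta,\alpha)}_n(\cos\phi)\:.
\end{equation*}
The parameters are swapped, so the hypothesis $m>\max\left\{0,\frac{\alpha-\beta}{2}\right\}$ is exactly the hypothesis of Lemma \ref{jacobi_lemma1} for the pair $(\beta,\alpha)$. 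Both $\alpha$ and $\beta$ lie in $\frac{1}{2}\N\cup\{0\}$, so Lemma \ref{jacobi_lemma1} applies. It yields
\begin{equation*}
P^{(\beta,\alpha)}_n(\cos\phi)=\tilde\Psi^+_n(\phi)e^{in\phi}+\tilde\Psi^-_n(\phi)e^{-in\phi}+\tilde{\mathcal{E}}_1(\phi,n)\:,
\end{equation*}
with $|\tilde\Psi^\pm_n(\phi)|\lesssim n^\beta\langle n\phi\rangle^{-\beta-\frac12}$, $|\tilde\Psi^\pm_n(\phi)-\tilde\Psi^\pm_{n-1}(\phi)|\lesssim n^{\beta-1}\langle n\phi\rangle^{-\beta-\frac12}$ and $|\tilde{\mathcal{E}}_1|\lesssim n^{\beta-m}$.

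Next we convert the phases. We have $e^{\pm in\phi}=e^{\pm in\pi}e^{\mp in\theta}=(-1)^n e^{\mp in\theta}$. Combined with the prefactor $(-1)^n$, this gives $(-1)^n e^{\pm in\phi}=e^{\mp in\theta}$. We therefore define
\begin{equation*}
\Lambda^\pm_n(\theta):=\tilde\Psi^\mp_n(\pi-\theta)\:,\qquad \mathcal{E}_2(\theta,n):=(-1)^n\tilde{\mathcal{E}}_1(\pi-\theta,n)\:.
\end{equation*}
The pointwise bounds and the remainder bound then transfer immediately, with $\langle n(\pi-\theta)\rangle$ replacing $\langle n\phi\rangle$.

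The one point needing care is the difference estimate. The sign $(-1)^n$ alternates with $n$, so if it were absorbed into the coefficients, the quantity $\Lambda^\pm_n-\Lambda^\pm_{n-1}$ would become a sum rather than a difference and would not be small. The definition above avoids this: the sign is absorbed entirely into the exponential, so $\Lambda^\pm_n$ contains no $n$-dependent sign. The difference $\Lambda^\pm_n-\Lambda^\pm_{n-1}$ is then literally $\tilde\Psi^\mp_n(\phi)-\tilde\Psi^\mp_{n-1}(\phi)$, and Lemma \ref{jacobi_lemma1} bounds it by $n^{\beta-1}\langle n(\pi-\theta)\rangle^{-\beta-\frac12}$.

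This sign bookkeeping is the only real obstacle. Everything else is inherited from Lemma \ref{jacobi_lemma1}, including the sign-free phase factor $e^{\pm i(\alpha+\beta+1)\phi/2}$ built into $\tilde\Psi^\pm_n$, which is independent of $n$.
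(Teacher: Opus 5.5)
Your proof is correct and follows the paper's argument. You substitute $\eta=\pi-\theta$, use the symmetry (\ref{jacobi_symmetry}) to obtain $(-1)^n P^{(\beta,\alpha)}_n(\cos\eta)$, and apply Lemma \ref{jacobi_lemma1} with the parameters swapped. Your explicit absorption of $(-1)^n$ through $(-1)^n e^{\pm in(\pi-\theta)}=e^{\mp in\theta}$, which gives $\Lambda^\pm_n(\theta)=\tilde\Psi^\mp_n(\pi-\theta)$ with no alternating sign, is exactly the bookkeeping the paper leaves implicit in ``repeating the proof.''
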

\begin{proof}
By the change of variable, $\eta=\pi-\theta \in \left[0,\frac{\pi}{2}\right]$ and the fundamental symmetry of Jacobi polynomials (\ref{jacobi_symmetry}), we have
\begin{equation*}
P^{(\alpha,\beta)}_n(\cos \theta)=P^{(\alpha,\beta)}_n(\cos (\pi-\eta))=P^{(\alpha,\beta)}_n(-\cos \eta)=(-1)^n P^{(\beta,\alpha)}_n(\cos \eta)\:.
\end{equation*}
The result then follows by repeating the proof of Lemma \ref{jacobi_lemma1} for $P^{(\beta,\alpha)}_n$.
\end{proof}

We now complete the proof of Theorem \ref{thm_oscillatory_zn}:
\begin{proof}[Proof of Theorem \ref{thm_oscillatory_zn}]
As mentioned earlier, the key identity is (\ref{in_terms_jacobi}):
\begin{equation*} 
Z_n(\theta):= d_n \frac{P^{(\alpha,\beta)}_n{\left(\cos\left(\frac{\theta}{s}\right)\right)}}{P^{(\alpha,\beta)}_n(1)}\:,\:\:\theta \in [0,\pi]\:.
\end{equation*}
We also note by (\ref{dimension_growth}) and (\ref{jacobi_at_1}) that for $n \in \N$,
\begin{equation} \label{thm_zonal_pf_eq1}
\frac{d_n}{P^{(\alpha,\beta)}_n(1)} \asymp n^{\frac{d}{2}}\:. 
\end{equation} 
For part (\ref{part1}), we note that in the case of $P^d(\R)$, $s=2$ and also $\alpha=\beta=\frac{d-2}{2}$. Hence the result follows from Lemma \ref{jacobi_lemma1} and (\ref{thm_zonal_pf_eq1}). Similarly parts (\ref{part2}) and (\ref{part3}) follow from Lemmata \ref{jacobi_lemma1} and \ref{jacobi_lemma2} respectively, combined with (\ref{thm_zonal_pf_eq1}) and the fact that $\alpha=\frac{d-2}{2}$ always.
\end{proof}

\section{Talbot effect} \label{sec7}
In this section, we will prove Theorem \ref{thm_talbot} and Corollary \ref{talbot_corollary}. But first we recall the following Weyl sum estimate:
\begin{lemma}\cite[Lemma 3.2]{MathZ}  \label{Weyl_estimate}
Let $\{b_n\}_{n \ge 1}$ be a sequence of complex numbers such that
\begin{equation*}
|b_n| \lesssim n^p\:\:\text{ and } |b_n-b_{n-1}| \lesssim n^{p-1}\:,
\end{equation*}
for some $p \in \R$. Then for almost every $t$ and all $N \in \N$, we have
\begin{equation*}
\sup_{N \le u \le 2N} \sup_{x \in \R} \left|\sum_{n=N}^u e^{in^2t+inx}\: b_n\right|\lesssim_t N^{p+\frac{1}{2}}\:.
\end{equation*}
\end{lemma}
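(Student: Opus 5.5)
The plan is to reduce, by summation by parts, to the unweighted case $b_n\equiv 1$, and then prove an almost-everywhere bound for quadratic Weyl sums that is uniform in the linear frequency $x$. Set
\begin{equation*}
W_{N,v}(t,x):=\sum_{n=N}^{v} e^{in^2t+inx}\:,\qquad N\le v\le 2N\:.
\end{equation*}
Abel summation gives, for $N\le u\le 2N$,
\begin{equation*}
\sum_{n=N}^{u} e^{in^2t+inx}\,b_n = b_u\,W_{N,u}(t,x)-\sum_{n=N}^{u-1}\left(b_{n+1}-b_n\right)W_{N,n}(t,x)\:.
\end{equation*}
By hypothesis $|b_u|\lesssim N^p$ and $\sum_{n=N}^{2N}|b_{n+1}-b_n|\lesssim N\cdot N^{p-1}=N^p$. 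Hence the lemma follows once we show that for almost every $t$,
\begin{equation*}
\mathcal{W}_N(t):=\sup_{N\le v\le 2N}\sup_{x\in\R}\left|W_{N,v}(t,x)\right| \lesssim_t N^{\frac12}\:.
\end{equation*}
This bound is uniform in $u$ and $x$, so it yields the two outer suprema at no extra cost.

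For the unweighted bound I would use Diophantine approximation of $t/(2\pi)$. Take its continued fraction convergents $a_k/q_k$ and choose $k$ with $q_k\le N<q_{k+1}$. Weyl differencing (squaring and van der Corput) gives the classical Hardy--Littlewood type estimate
\begin{equation*}
|W_{N,v}(t,x)|^2\lesssim N+\sum_{h=1}^{2N}\min\left\{N,\:\|2ht/2\pi\|^{-1}\right\}\:,
\end{equation*}
which holds uniformly in $x$ because the linear phase disappears after differencing. This alone loses a factor $\log N$.

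To avoid that loss, I would instead use the approximate functional equation for theta sums. One application of it converts $W_{N,v}(t,\cdot)$ into a sum of length $\approx tN$ with parameter $-1/t$ (mod $2\pi$), multiplied by $t^{-1/2}$, plus an $O(t^{-1/2})$ error that is uniform in $x$. I would iterate this renormalization along the Gauss map, as Fiedler--Jurkat--K\"orner do. The result is a bound for $N^{-1/2}\,\mathcal{W}_N(t)$ by a sum over the renormalization steps of terms controlled by the partial quotients $a_{j}$, $j\le k$, in particular by $\sqrt{a_{k+1}}$-type quantities. The Borel--Cantelli lemma, applied to the Gauss measure of cylinder sets $\{a_{j}\ge A\}$, then controls the large partial quotients for almost every $t$.

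The hardest step is this last one: getting exactly $N^{1/2}$ rather than $N^{1/2}(\log N)^{c}$ almost everywhere, because the accumulated errors from about $\log N$ renormalization steps must be summable and not merely bounded per step. My first attempt would be to exploit the geometric decay $q_{j}/q_{k}\lesssim 2^{-(k-j)/2}$ of the earlier denominators, so that the contribution of step $j$ is weighted by $(q_j/N)^{1/2}$. The sum over $j$ then telescopes into a convergent geometric series, and only the top partial quotient $a_{k+1}$ is left to handle via Borel--Cantelli. I expect this final Borel--Cantelli step to be delicate. The Fiedler--Jurkat--K\"orner theory suggests that the constant depending on $t$ must absorb a slowly growing factor along a sparse set of $N$. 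Here I would follow the precise form in which \cite[Lemma 3.2]{MathZ} is proved: a single Borel--Cantelli application over dyadic $N$ for the top quotient, which fixes the $t$-dependent constant.
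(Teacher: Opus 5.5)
Your Abel summation step is correct and is the natural reduction. With $|b_u|\lesssim N^p$ and $\sum_{N\le n<2N}|b_{n+1}-b_n|\lesssim N^p$, everything reduces to a bound for $\mathcal{W}_N(t)$ that is uniform in $v$ and $x$. The paper itself gives no argument for this lemma; it only quotes \cite{MathZ}.

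\textbf{The gap.} The target you set for $\mathcal{W}_N(t)$ is false. The loss-free bound $\mathcal{W}_N(t)\lesssim_t N^{1/2}$ fails for almost every $t$, so no renormalization or Borel--Cantelli scheme can produce it. To see this, take $b_n\equiv 1$ (admissible with $p=0$) and $x=0$. Split $[1,N]$ into the blocks $[2^j,\min\{2^{j+1}-1,N\}]$. The lemma as printed would then give
$\bigl|\sum_{n=1}^{N}e^{in^2t}\bigr|\le C_t\sum_{2^j\le N}2^{j/2}\lesssim C_t\sqrt N$ for all $N$. But Fiedler--Jurkat--K\"orner proved that for almost every $t$ this sum is not $O(\sqrt N\,f(\log N))$ whenever $\sum_k f(k)^{-4}=\infty$. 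In particular it is not $O_t(\sqrt N)$.

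The obstruction sits exactly in the step you deferred. Borel--Cantelli only gives $a_{k+1}\le k^{1+\varepsilon}$ for large $k$. By Borel--Bernstein, however, the partial quotients of $t/2\pi$ satisfy $a_{k+1}\ge k\log k$ infinitely often for almost every $t$. A large quotient produces scales $N\approx q_k a_{k+1}^{1/2}$ at which, by the usual Gauss-sum heuristic, the sum has size roughly $N/\sqrt{q_k}\approx\sqrt N\,a_{k+1}^{1/4}$. So no Borel--Cantelli argument over dyadic $N$ can fix a $t$-dependent constant. Appealing to ``the precise form in which \cite{MathZ} is proved'' does not supply the missing argument.

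\textbf{What is true.} The provable statement, and the one the downstream argument can legitimately use, carries an $\varepsilon$-loss: $\lesssim_{t,\varepsilon}N^{p+\frac12+\varepsilon}$ for every $\varepsilon>0$. The statement as printed drops this loss. For this version your first idea already suffices, and the functional-equation iteration is unnecessary; done correctly, it yields at best $\sqrt N$ times a power of $\log N$ anyway. The argument runs as follows.
\begin{itemize}
\item By Borel--Cantelli, for almost every $t$ the inequality $|t/\pi-a/q|<q^{-2-\varepsilon}$ has only finitely many solutions. Here $t/\pi$ is the frequency that appears after differencing.
\item Hence the convergent denominators of $t/\pi$ satisfy $q_{k+1}\lesssim_{t,\varepsilon}q_k^{1+\varepsilon}$.
\item Choose $q_k\le N<q_{k+1}$ and use the standard bound $\sum_{h\le 2N}\min\{N,\|ht/\pi\|^{-1}\}\lesssim (N/q_k+1)(N+q_k\log q_k)$.
\item Since $N^2/q_k\le N q_{k+1}/q_k\lesssim_{t,\varepsilon} N^{1+\varepsilon}$, this gives $|W_{N,v}(t,x)|^2\lesssim N^2/q_k+N\log N\lesssim_{t,\varepsilon}N^{1+\varepsilon}$, uniformly in $v$ and $x$.
\item Your Abel summation then finishes the proof.
\end{itemize}

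This loss propagates into the paper. In part (\ref{talbot_part1}) of Theorem \ref{thm_talbot} the H\"older exponent becomes $\gamma-$; for instance, you get $C^{s-}$ rather than $C^{s}$ when $p>\frac{2d}{d+1}$. The Minkowski dimension bound in part (\ref{talbot_part2}) is unaffected.
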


We are now ready to prove Theorem \ref{thm_talbot}:
\begin{proof}[Proof of Theorem \ref{thm_talbot}]
We recall that the real projective space $P^d(\R)$ can be obtained from $\Sb^d$ by identifying the antipodal points:
\begin{eqnarray*}
\pi : &&\Sb^d\:\: \to \:\: P^d(\R) \\
&& \pm x \:\:\mapsto \:\: [x]\:,
\end{eqnarray*}
with the projection map $\pi$ being a local isometry.  Thus the functions on $P^d(\R)$ can be identified as even functions on $\Sb^d$ and if $f_e$ is the even function on $\Sb^d$ corresponding to the function $f$ on $P^d(\R)$, then $\Delta_{P^d(\R)} f=\Delta_{\Sb^d}f_e$. Also the distances on $P^d(\R)$ and $\Sb^d$ are related as follows:
\begin{equation*}
d_{P^d(\R)}\left(\pi(x),\pi(y)\right)= \min \left\{d_{\Sb^d}(x,y)\:,\:d_{\Sb^d}(x,-y)\right\}\:,
\end{equation*}
and thus there is a one-to-one correspondence between the H\"older spaces $C^\gamma\left(P^d(\R)\right)$  and $C^\gamma\left(\Sb^d\right)_e$, the collection of even functions in $C^\gamma\left(\Sb^d\right),\:\gamma \in (0,1)$. Moreover, combining the facts that $\pi$ is a local isometry and the compactness of $P^d(\R)$, proceeding as in the proof of Theorem \ref{thm_gmt}, we note that the upper Minkowski dimension of sets are preserved under $\pi$. Therefore the validity of Theorem \ref{thm_talbot} on $P^d(\R)$ follows from that of $\Sb^d$. Hence, we will prove Theorem \ref{thm_talbot} for $\X=\Sb^d,P^d(\C),\:P^d(\Hb)$ and $P^{16}(Cay)$.

To prove part (\ref{talbot_part1}) of Theorem \ref{thm_talbot}, it suffices to prove that
\begin{equation} \label{thm_talbot_pf_eq1}
u(\cdot,t) \in B^{\gamma}_{\infty,\infty}(\X)\:,\:\text{ for almost all }t\:. 
\end{equation}
Indeed, by Corollary \ref{corollary3}, (\ref{thm_talbot_pf_eq1}) implies that for almost all $t,\:u(\cdot,t) \in C^{\gamma}(\X)$. 

Thus by the definition of $B^{\gamma}_{\infty,\infty}(\X)$, we need to show the following estimate for the Littlewood-Paley projections, for almost all $t$:
\begin{equation} \label{thm_talbot_pf_eq2}
\left\|P_Nu(\cdot,t)\right\|_{L^\infty(\X)} \lesssim N^{-\gamma}\:,\:N\ge 1,\:\text{ dyadic }\:.
\end{equation}

By orthogonality of projections, we can write $P_Nu(\cdot,t)$ as a right convolution with a radial kernel on $\X$:
\begin{equation*}
P_Nu(\cdot,t)=P_Nf * H_{N,t}\:,
\end{equation*}
where
\begin{equation} \label{thm_talbot_pf_eq3} 
H_{N,t}(\theta):= \sum_{n=N+1}^{2N} e^{itn(n+\alpha+\beta+1)}\:Z_n(\theta)\:,\:\:\theta \in [0,\pi]\:,
\end{equation}
where we recall that $\alpha,\beta$ are as in Table \ref{table:2} and $Z_n$ are the zonal spherical functions of degree $n$. Then by Young's inequality, we have
\begin{equation} \label{thm_talbot_pf_eq4}
\left\|P_Nu(\cdot,t)\right\|_{L^\infty(\X)} \le \left\|P_Nf\right\|_{L^p(\X)}\|H_{N,t}\|_{L^{p'}(\X)}\:,
\end{equation}
where $p'$ is the H\"older conjugate of $p$.

Now to estimate $\|H_{N,t}\|_{L^{p'}(\X)}$, we seek pointwise bounds utilizing Theorem \ref{thm_oscillatory_zn}. First for $\theta \in \left[0,\frac{\pi}{2}\right]$, by invoking part (\ref{part2}) of Theorem \ref{thm_oscillatory_zn} for $m \ge \frac{d}{2}$ into (\ref{thm_talbot_pf_eq3}), we get
\begin{eqnarray*}
\left|H_{N,t}(\theta)\right| &\le& \left|\sum_{n=N+1}^{2N} e^{itn(n+\alpha+\beta+1)}\:e^{in\theta}\psi^+_n(\theta)\right|\\
&&+ \left|\sum_{n=N+1}^{2N} e^{itn(n+\alpha+\beta+1)}\:e^{-in\theta}\psi^-_n(\theta)\right|\\
&&+\left|\sum_{n=N+1}^{2N} e^{itn(n+\alpha+\beta+1)}\:E_1(\theta, n)\right|\:.
\end{eqnarray*}
Then by the pointwise bounds of $\left|E_1(\theta, n)\right|$, we have
\begin{equation} \label{thm_talbot_pf_eq5}
\left|\sum_{n=N+1}^{2N} e^{itn(n+\alpha+\beta+1)}\:E_1(\theta, n)\right| \lesssim N^{d-m}\:.
\end{equation}
To estimate the other two Weyl sums, we consider two separate cases. First when $\theta \in \left[0,\frac{1}{N}\right]$, combining the estimates of $|\psi^\pm_n(\theta)|$ and $|\psi^\pm_n(\theta)-\psi^\pm_{n-1}(\theta)|$ (provided in part (\ref{part2}) of Theorem \ref{thm_oscillatory_zn}) and Lemma \ref{Weyl_estimate}, we get
\begin{equation} \label{thm_talbot_pf_eq6}
\left|\sum_{n=N+1}^{2N} e^{in^2t+in\left(t(\alpha+\beta+1)\pm \theta\right)}\:\psi^\pm_n(\theta)\right| \lesssim_t N^{d-\frac{1}{2}} \lesssim \frac{N^{d-\frac{1}{2}}}{\langle N\theta\rangle^{\frac{d-1}{2}}}\:.
\end{equation}
In the second case, when $\theta \in \left[\frac{1}{N},\frac{\pi}{2}\right]$, again combining the estimates of $|\psi^\pm_n(\theta)|$ and $|\psi^\pm_n(\theta)-\psi^\pm_{n-1}(\theta)|$ (provided in part (\ref{part2}) of Theorem \ref{thm_oscillatory_zn}) and Lemma \ref{Weyl_estimate}, we get
\begin{equation} \label{thm_talbot_pf_eq7}
\left|\sum_{n=N+1}^{2N} e^{in^2t+in\left(t(\alpha+\beta+1)\pm \theta\right)}\:\psi^\pm_n(\theta)\right| \lesssim_t \frac{N^{\frac{d}{2}}}{\theta^{\frac{d-1}{2}}} \lesssim \frac{N^{d-\frac{1}{2}}}{\langle N\theta\rangle^{\frac{d-1}{2}}}\:.
\end{equation}
Thus as $m \ge \frac{d}{2}$, by (\ref{thm_talbot_pf_eq5})-(\ref{thm_talbot_pf_eq7}), we get 
\begin{equation} \label{kernel_estimate1}
\left|H_{N,t}(\theta)\right| \lesssim_t \frac{N^{d-\frac{1}{2}}}{\langle N\theta\rangle^{\frac{d-1}{2}}}\:,\:\:\theta \in \left[0,\frac{\pi}{2}\right]\:.
\end{equation}
Then by the formula for the density (\ref{density}), it follows for $p'\ne \infty$,
\begin{eqnarray*}
&&\int_0^{\frac{\pi}{2}} \left|H_{N,t}(\theta)\right|^{p'}\mathcal{A}(\theta)\:d\theta \\
&\lesssim_t & N^{p'\left(d-\frac{1}{2}\right)}\int_0^{\frac{\pi}{2}} \frac{\theta^{d-1}}{\langle N\theta\rangle^{\frac{p'(d-1)}{2}}}\:d\theta\\
&=& N^{p'\left(d-\frac{1}{2}\right)}\int_0^{\frac{1}{N}} \frac{\theta^{d-1}}{\langle N\theta\rangle^{\frac{p'(d-1)}{2}}}\:d\theta\:+\:N^{p'\left(d-\frac{1}{2}\right)}\int_{\frac{1}{N}}^{\frac{\pi}{2}} \frac{\theta^{d-1}}{\langle N\theta\rangle^{\frac{p'(d-1)}{2}}}\:d\theta\\
&\lesssim& N^{p'\left(d-\frac{1}{2}\right)}\int_0^{\frac{1}{N}} \theta^{d-1}\:d\theta\:+\:N^{\frac{p'd}{2}}\int_{\frac{1}{N}}^{\frac{\pi}{2}} \theta^{\left(1-\frac{p'}{2}\right)(d-1)}\:d\theta\\
&\lesssim& N^{p'\left(d-\frac{1}{2}\right)-d}\:+\:N^{\frac{p'd}{2}}\int_{\frac{1}{N}}^{\frac{\pi}{2}} \theta^{\left(1-\frac{p'}{2}\right)(d-1)}\:d\theta\:.
\end{eqnarray*}
A simple calculation now yields,
\begin{eqnarray} \label{thm_talbot_pf_eq8}
\int_0^{\frac{\pi}{2}} \left|H_{N,t}(\theta)\right|^{p'}\mathcal{A}(\theta)\:d\theta &\lesssim_t & N^{p'\left(d-\frac{1}{2}\right)-d} + \begin{cases}
      N^{\frac{p'd}{2}}\:\:&\text{ if } 1 \le p' < \frac{2d}{d-1}\:,\\
	 N^{\frac{p'd}{2}+}\:\:&\text{ if } p' = \frac{2d}{d-1}\:,\\
	 N^{p'\left(d-\frac{1}{2}\right)-d}\:\:&\text{ if } \frac{2d}{d-1} < p' <\infty\:,
	\end{cases}
\nonumber\\
&\lesssim & \begin{cases}
      N^{\frac{p'd}{2}}\:\:&\text{ if } 1 \le p' < \frac{2d}{d-1}\:,\\
	 N^{\frac{p'd}{2}+}\:\:&\text{ if } p' = \frac{2d}{d-1}\:,\\
	 N^{p'\left(d-\frac{1}{2}\right)-d}\:\:&\text{ if } \frac{2d}{d-1} < p' <\infty\:.
	\end{cases}
\end{eqnarray}

We now shift our attention to $\theta \in \left[\frac{\pi}{2},\pi\right]$. By invoking part (\ref{part3}) of Theorem \ref{thm_oscillatory_zn} for integers $m \ge \max\left\{\beta +1,\frac{d+2(1-\beta)}{4} \right\}$ into (\ref{thm_talbot_pf_eq3}), we get
\begin{eqnarray*}
\left|H_{N,t}(\theta)\right| &\le& \left|\sum_{n=N+1}^{2N} e^{itn(n+\alpha+\beta+1)}\:e^{in\theta}\lambda^+_n(\theta)\right|\\
&&+ \left|\sum_{n=N+1}^{2N} e^{itn(n+\alpha+\beta+1)}\:e^{-in\theta}\lambda^-_n(\theta)\right|\\
&&+\left|\sum_{n=N+1}^{2N} e^{itn(n+\alpha+\beta+1)}\:E_2(\theta, n)\right|\:.
\end{eqnarray*}
Then by the pointwise bounds of $\left|E_2(\theta, n)\right|$, we have
\begin{equation} \label{thm_talbot_pf_eq9}
\left|\sum_{n=N+1}^{2N} e^{itn(n+\alpha+\beta+1)}\:E_2(\theta, n)\right| \lesssim N^{\left(\beta + \frac{d}{2}+1\right)-m}\:.
\end{equation}
To estimate the other two Weyl sums, we consider two separate cases. First when $\theta \in \left[\frac{\pi}{2},\pi -\frac{1}{N}\right]$, combining the estimates of $|\lambda^\pm_n(\theta)|$ and $|\lambda^\pm_n(\theta)-\lambda^\pm_{n-1}(\theta)|$ (provided in part (\ref{part3}) of Theorem \ref{thm_oscillatory_zn}) and Lemma \ref{Weyl_estimate}, we get
\begin{equation} \label{thm_talbot_pf_eq10}
\left|\sum_{n=N+1}^{2N} e^{in^2t+in\left(t(\alpha+\beta+1)\pm \theta\right)}\:\lambda^\pm_n(\theta)\right| \lesssim_t \frac{N^{\frac{d}{2}}}{(\pi-\theta)^{\beta+\frac{1}{2}}} \lesssim \frac{N^{\beta+\frac{d+1}{2}}}{\langle N(\pi-\theta)\rangle^{\beta+\frac{1}{2}}}\:.
\end{equation}
In the second case, when $\theta \in \left[\pi-\frac{1}{N},\pi\right]$, again combining the estimates of $|\lambda^\pm_n(\theta)|$ and $|\lambda^\pm_n(\theta)-\lambda^\pm_{n-1}(\theta)|$ (provided in part (\ref{part3}) of Theorem \ref{thm_oscillatory_zn}) and Lemma \ref{Weyl_estimate}, we get
\begin{equation} \label{thm_talbot_pf_eq11}
\left|\sum_{n=N+1}^{2N} e^{in^2t+in\left(t(\alpha+\beta+1)\pm \theta\right)}\:\lambda^\pm_n(\theta)\right| \lesssim_t N^{\beta+\frac{d+1}{2}}\lesssim \frac{N^{\beta+\frac{d+1}{2}}}{\langle N(\pi-\theta)\rangle^{\beta+\frac{1}{2}}}\:.
\end{equation}
Thus as $m \ge \beta +1$, by (\ref{thm_talbot_pf_eq9})-(\ref{thm_talbot_pf_eq11}), we get 
\begin{equation} \label{kernel_estimate2}
\left|H_{N,t}(\theta)\right| \lesssim_t \frac{N^{\beta+\frac{d+1}{2}}}{\langle N(\pi-\theta)\rangle^{\beta+\frac{1}{2}}}\:,\:\:\theta \in \left[\frac{\pi}{2},\pi\right]\:.
\end{equation}
Then by (\ref{kernel_estimate1}) and (\ref{kernel_estimate2}), we get that
\begin{equation*}
\left\|H_{N,t}\right\|_{L^\infty(\X)} \lesssim_t N^{d-\frac{1}{2}}\:,
\end{equation*}
which upon plugging in (\ref{thm_talbot_pf_eq4}) along with the hypothesis $\left\|P_Nf\right\|_{L^1(\X)} \lesssim N^{-\left(\frac{d}{2}+s\right)}$ yields,
\begin{equation}\label{L1_case}
\left\|P_Nu(\cdot,t)\right\|_{L^\infty(\X)} \lesssim_t N^{\frac{d-1}{2}-s}\:.
\end{equation}
For $p' \ne \infty$, the estimate (\ref{kernel_estimate2}) and the formula for the density (\ref{density}) yield,
\begin{equation*}
\int_{\frac{\pi}{2}}^\pi \left|H_{N,t}(\theta)\right|^{p'}\mathcal{A}(\theta)\:d\theta \lesssim_t N^{p'\left(\beta+\frac{d+1}{2}\right)}\int_{\frac{\pi}{2}}^\pi \frac{{\left(\sin \frac{\theta}{2}\right)}^{M_1} {(\sin \theta)}^{M_2}}{\langle N(\pi-\theta)\rangle^{p'\left(\beta+\frac{1}{2}\right)}}\:d\theta\:.
\end{equation*}
Now by the change of variable, $\eta=\pi-\theta$, we get,
\begin{eqnarray*}
&&\int_{\frac{\pi}{2}}^\pi \left|H_{N,t}(\theta)\right|^{p'}\mathcal{A}(\theta)\:d\theta \\
&\lesssim_t & N^{p'\left(\beta+\frac{d+1}{2}\right)} \int_0^{\frac{\pi}{2}} \frac{\eta^{M_2}}{\langle N\eta \rangle^{p'\left(\beta +\frac{1}{2}\right)}}\:d\eta \\
&=& N^{p'\left(\beta+\frac{d+1}{2}\right)} \int_0^{\frac{1}{N}} \frac{\eta^{M_2}}{\langle N\eta \rangle^{p'\left(\beta +\frac{1}{2}\right)}}\:d\eta \:+\:N^{p'\left(\beta+\frac{d+1}{2}\right)} \int_{\frac{1}{N}}^{\frac{\pi}{2}} \frac{\eta^{M_2}}{\langle N\eta \rangle^{p'\left(\beta +\frac{1}{2}\right)}}\:d\eta \\
& \lesssim & N^{p'\left(\beta+\frac{d+1}{2}\right)-(M_2+1)}\:+\:N^{\frac{p'd}{2}}\int_{\frac{1}{N}}^{\frac{\pi}{2}} \eta^{M_2-p'\left(\beta+\frac{1}{2}\right)}\:d\eta\:.
\end{eqnarray*}
A simple calculation now yields,
\begin{eqnarray} \label{thm_talbot_pf_eq12}
\int_{\frac{\pi}{2}}^\pi \left|H_{N,t}(\theta)\right|^{p'}\mathcal{A}(\theta)\:d\theta &\lesssim_t & N^{p'\left(\beta+\frac{d+1}{2}\right)-(M_2+1)} + \begin{cases}
      N^{\frac{p'd}{2}}\:\:&\text{ if } 1 \le p' < \frac{2(M_2+1)}{2\beta +1}\:,\\
	 N^{\frac{p'd}{2}+}\:\:&\text{ if } p' =\frac{2(M_2+1)}{2\beta +1}\:,\\
	 N^{p'\left(\beta+\frac{d+1}{2}\right)-(M_2+1)} \:\:&\text{ if }  \frac{2(M_2+1)}{2\beta +1} <p' <\infty\:,
	\end{cases}
\nonumber\\
&\lesssim & \begin{cases}
      N^{\frac{p'd}{2}}\:\:&\text{ if } 1 \le p' < \frac{2(M_2+1)}{2\beta +1}\:,\\
	 N^{\frac{p'd}{2}+}\:\:&\text{ if } p' =\frac{2(M_2+1)}{2\beta +1}\:,\\
	 N^{p'\left(\beta+\frac{d+1}{2}\right)-(M_2+1)} \:\:&\text{ if }  \frac{2(M_2+1)}{2\beta +1} <p' <\infty\:.
	\end{cases}
\end{eqnarray}
Thus comparing (\ref{thm_talbot_pf_eq8}) and (\ref{thm_talbot_pf_eq12}) with an eye on Tables \ref{table:1} and \ref{table:2} yield,
\begin{equation*} 
\left\|H_{N,t}\right\|_{L^{p'}(\X)} \lesssim_t \begin{cases}
      N^{\frac{d}{2}}\:\:&\text{ if } 1 \le p' < \frac{2d}{d-1}\:,\\
	 N^{\frac{d}{2}+}\:\:&\text{ if } p' = \frac{2d}{d-1}\:,\\
	 N^{d\left(1-\frac{1}{p'}\right)-\frac{1}{2}}\:\:&\text{ if }  \frac{2d}{d-1}<p'<\infty\:.
	\end{cases}
\end{equation*}
Then the above estimate along with (\ref{thm_talbot_pf_eq4}) and the hypothesis $\|P_Nf\|_{L^p(\X)} \lesssim N^{-\left(\frac{d}{2}+s\right)}$ yields for almost all $t$,
\begin{equation*}
\left\|P_Nu(\cdot,t)\right\|_{L^\infty(\X)} \lesssim_t \begin{cases}
	 N^{\frac{d}{p}-\frac{d+1}{2}-s}\:\:&\text{ if } 1<p < \frac{2d}{d+1}\:.\\
	 	 N^{-s+}\:\:&\text{ if } p=\frac{2d}{d+1}\:,\\
	 	 N^{-s}\:\:&\text{ if } \frac{2d}{d+1} < p \le \infty\:,
	\end{cases}
\end{equation*}
which after combining with the $L^1$ case (\ref{L1_case}), we get
\begin{equation*}
\left\|P_Nu(\cdot,t)\right\|_{L^\infty(\X)} \lesssim_t \begin{cases}
	 N^{\frac{d}{p}-\frac{d+1}{2}-s}\:\:&\text{ if } 1\le p < \frac{2d}{d+1}\:.\\
	 	 N^{-s+}\:\:&\text{ if } p=\frac{2d}{d+1}\:,\\
	 	 N^{-s}\:\:&\text{ if } \frac{2d}{d+1} < p \le \infty\:,
	\end{cases}
\end{equation*}
This yields (\ref{thm_talbot_pf_eq2}) and completes the proof of part (\ref{talbot_part1}) of Theorem \ref{thm_talbot}.

\medskip

To get part (\ref{talbot_part2}), we note that if $f \in C^{\gamma'}(\X)$ then so does its real and imaginary parts. Then the result follows by applying Corollary \ref{graph_corollary} on the real and imaginary parts.
\end{proof}

\begin{proof}[Proof of Corollary \ref{talbot_corollary}]
Corollary \ref{talbot_corollary} follows at once from Theorem \ref{thm_talbot} by observing that we must have $$s=\frac{d}{p}-\frac{d}{2}=\frac{d(d+1)}{2d}-\frac{d}{2}=\frac{1}{2}\:.$$
\end{proof}

\section{Concluding remarks}\label{sec8}
\begin{enumerate}
\item In Theorem \ref{thm_cesaro}, we have worked under the assumption that $\delta \ge \delta_*$. The reason behind such a choice was the crucial estimate (\ref{cesaro_kernel_estimate}). But since one has the Ces\`aro summability for H\"older functions as soon as $\delta>(d-1)/2$ \cite{Bonami}, it would be interesting to study whether the index $\delta$ in Theorem \ref{thm_cesaro} can be chosen in $\left(\frac{d-1}{2},\delta_*\right)$.

\medskip
  
\item In Theorem \ref{thm_talbot} and Corollary \ref{talbot_corollary}, we have only studied upper bounds on the upper Minkowski dimension. It is thus natural to also ask about lower bounds on the fractal dimension. In the case of $\Sb^d$ itself, some results are known only for $\Sb^2$ \cite[Theorems 1.1, 1.3, 3.8]{MathZ}. For higher dimensional manifolds, this seems to be a challenging task however as it may require stronger Strichartz estimates than what are currently known. For the interested reader, we refer them to compare the Strichartz estimates obtained for general compact manifolds in \cite{Burq} with for the tori $\mathbb T^d$ in \cite{Bourgain}.

\medskip

\item Finally, it would be interesting to see possible generalizations of the results appearing in this article to higher rank Riemannian symmetric spaces of compact type.
\end{enumerate}

\section*{Acknowledgements}  The author is supported by the Institute Post Doctoral Fellowship of Indian Institute of Technology, Bombay.

\bibliographystyle{amsplain}

\end{document}